\definecolor{dblue}{rgb}{0,0,0.70}
\newtheorem{theorem}{Theorem}[section]	
\newtheorem*{theorem*}{Theorem}
\newaliascnt{lemma}{theorem}
\newtheorem{lemma}[lemma]{Lemma}
\newaliascnt{proposition}{theorem}
\newtheorem{proposition}[proposition]{Proposition}
\newaliascnt{corollary}{theorem}
\newtheorem{corollary}[corollary]{Corollary}
\theoremstyle{remark}
\newtheorem*{remark}{Remark}
\newtheorem*{question}{Question}
\theoremstyle{definition}
\newtheorem{definition}[theorem]{Definition}
\newaliascnt{example}{theorem}
\newtheorem{example}[example]{Example}
\renewcommand{\restriction}{\mathbin\upharpoonright}
\newcommand{\axiom}[1]{\mathsf{#1}} 
\newcommand{\ZFC}{\axiom{ZFC}}
\newcommand{\AC}{\axiom{AC}}
\newcommand{\DC}{\axiom{DC}}
\newcommand{\ZF}{\axiom{ZF}}
\newcommand{\Ord}{\mathrm{Ord}}
\newcommand{\HOD}{\mathrm{HOD}}
\newcommand{\IS}{\axiom{IS}}
\newcommand{\HS}{\axiom{HS}}
\newcommand{\KWP}{\axiom{KWP}}
\newcommand{\SVC}{\axiom{SVC}}
\DeclareMathOperator{\dom}{dom}
\DeclareMathOperator{\supp}{supp}
\DeclareMathOperator{\rank}{rank}
\DeclareMathOperator{\sym}{sym}
\DeclareMathOperator{\fix}{fix}
\DeclareMathOperator{\id}{id}
\DeclareMathOperator{\aut}{Aut}
\DeclareMathOperator{\Add}{Add}
\newcommand{\forces}{\mathrel{\Vdash}}
\newcommand{\nforces}{\mathrel{\not{\Vdash}}}
\newcommand{\gaut}[1]{{\textstyle\int_{#1}}}
\newcommand{\PP}{\mathbb{P}}
\newcommand{\power}{\mathcal{P}}
\newcommand{\QQ}{\mathbb{Q}}
\newcommand{\BB}{\mathbb B}
\newcommand{\cF}{\mathcal F}
\newcommand{\sF}{\mathscr F}
\newcommand{\cG}{\mathcal G}
\newcommand{\sG}{\mathscr G}
\newcommand{\tup}[1]{\langle#1\rangle}
\newcommand{\middd}{\mathrel{}\middle|\mathrel{}}
\author{Asaf Karagila}
\thanks{This paper is part of the author's Ph.D.\ written in the Hebrew University of Jerusalem under the supervision of Prof.\ Menachem Magidor.}
\thanks{This research was partially done whilst the author was a visiting fellow at the Isaac Newton Institute for Mathematical Sciences in the programme `Mathematical, Foundational and Computational Aspects of the Higher Infinite' (HIF) funded by EPSRC grant EP/K032208/1.}
\address{Einstein Institute of Mathematics.
Edmond J. Safra Campus, Givat Ram.
The Hebrew University of Jerusalem. 
Jerusalem, 91904, Israel}
\curraddr{School of Mathematics,
University of East Anglia.
Norwich, NR4~7TJ, United Kingdom
}
\email{karagila@math.huji.ac.il}
\urladdr{http://karagila.org}
\date{October 26, 2018}
\subjclass[2010]{Primary 03E40; Secondary 03E25}
\keywords{Symmetric extensions, iterated forcing, iterated symmetries, symmetric iteration, axiom of choice, ground model definability, Kinna--Wagner principles}
\title{Iterating Symmetric Extensions}
\begin{document}
\begin{abstract}
The notion of a symmetric extension extends the usual notion of forcing by identifying a particular class of names which forms an intermediate model of $\ZF$ between the ground model and the generic extension, and often the axiom of choice fails in these models. Symmetric extensions are generally used to prove choiceless consistency results. We develop a framework for iterating symmetric extensions in order to construct new models of $\ZF$. We show how to obtain some well-known and lesser-known results using this framework. Specifically, we discuss Kinna--Wagner principles and obtain some results related to their failure.
\end{abstract}
\maketitle    
\setcounter{tocdepth}{1}
\tableofcontents
\newpage
\section{Introduction}\label{section:intro}
Forcing is one of the main tools in modern set theory. It allows us to obtain consistency results by taking well-behaved models and extending them in a very controlled way, so we can ensure the new models satisfy certain statements. While it is true that the basic mechanics of forcing do not require the axiom of choice, the axiom of choice is deeply integrated into how we use forcing. Firstly, the axiom of choice cannot be violated via forcing; and secondly, the axiom of choice is used constantly when we argue with chain conditions, the mixing lemma, closure properties and more. Consider these two folklore theorems:
\begin{theorem*}
``If $p\forces\exists x\varphi(x)$, then $\exists \dot x: p\forces\varphi(\dot x)$'' is equivalent to $\AC$.\qed
\end{theorem*}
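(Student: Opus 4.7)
For the implication $\AC\Rightarrow$ the principle I would run the standard mixing-lemma argument: use $\AC$ to pick a maximal antichain $A$ below $p$ together with a name $\dot x_q$ for each $q\in A$ satisfying $q\forces\varphi(\dot x_q)$, then mix the $\dot x_q$ into a single name $\dot x$ with $q\forces\dot x=\dot x_q$ for every $q\in A$; maximality of $A$ below $p$ yields $p\forces\varphi(\dot x)$.

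For the converse, given a family $\{A_i\}_{i\in I}$ of non-empty sets, the idea is to design a forcing whose generic filters already sit inside $V$, so that the witness supplied by the principle can be evaluated directly in the ground model. I would take $\PP=I\cup\{1_\PP\}$ with $i<1_\PP$ for every $i\in I$ and $i\perp j$ whenever $i\neq j$. The only condition below $i\in I$ is $i$ itself, so the dense subsets of $\PP$ are precisely those containing $I$; hence each $G_i:=\{1_\PP,i\}\in V$ is $\PP$-generic over $V$, and $V[G_i]=V$. Writing $\dot G$ for the canonical name of the generic filter and $\check{\mathcal A}$ for that of the function $i\mapsto A_i$, I would apply the maximal principle to
\[
\varphi(x)\;=\;\exists j\bigl(j\in\check I\wedge j\in\dot G\wedge x\in\check{\mathcal A}(j)\bigr).
\]
Non-emptiness of each $A_i$ gives $1_\PP\forces\exists x\,\varphi(x)$, so the principle produces a \emph{single} name $\dot x$ with $1_\PP\forces\varphi(\dot x)$. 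Evaluating at each generic $G_i$ then yields $\dot x^{G_i}\in A_i$, and $F\colon I\to\bigcup_i A_i$ defined in $V$ by $F(i):=\dot x^{G_i}$ is a choice function.

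The main obstacle is the converse direction, specifically passing from a name for a witness to an actual element of $V$. With a ``rich'' forcing such as finite partial choice functions on $\{A_i\}$ the principle still supplies a name $\dot f$ for a choice function, but $\dot f^G$ lives in a proper extension $V[G]$ and cannot be returned to $V$ without invoking choice again. The point of the antichain-with-a-top construction is that every generic filter is a ground-model object; a single application of the principle then produces a name whose pointwise evaluations form a genuinely $V$-definable choice function.
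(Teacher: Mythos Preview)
The paper states this theorem as folklore and gives no proof (the statement ends immediately with \qed), so there is nothing to compare your argument against. Your proof is correct and is in fact the standard argument for this equivalence.

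A couple of minor remarks. In the forward direction, note that producing the maximal antichain $A$ together with the assignment $q\mapsto\dot x_q$ is itself an application of choice (Zorn's lemma on partial such assignments, or a well-ordering of the names), so you are using $\AC$ in two places, not just in ``picking'' the $\dot x_q$. In the converse direction, your key observation---that every $V$-generic filter for this poset already lies in $V$, so the name $\dot x$ can be evaluated pointwise inside $V$---is exactly the idea that makes the argument work, and your closing paragraph explains well why a more naive forcing (e.g.\ finite partial choice functions) would not suffice. You should also remark that the case $I=\varnothing$ is trivial, so that when you assert ``dense subsets of $\PP$ are precisely those containing $I$'' you may freely assume $I\neq\varnothing$ and hence that conditions below $1_\PP$ exist.
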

\begin{theorem*}
For every $\kappa$, ``If $\PP$ is a $\kappa$-closed forcing, then $\PP$ does not add $\kappa$-sequences to the ground model'' is equivalent to $\DC_\kappa$.\qed
\end{theorem*}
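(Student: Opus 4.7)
The plan is to prove the equivalence by two standard forcing constructions. For the forward direction, assume $\DC_\kappa$ and let $\PP$ be $\kappa$-closed. Given $p\forces\dot f\colon\check\kappa\to\check V$, I will show that the set of conditions below $p$ forcing $\dot f$ to equal a ground-model sequence is dense. The recipe is to build a decreasing chain $\tup{p_\beta:\beta<\kappa}$ below $p$ together with values $\tup{x_\beta:\beta<\kappa}\in V$ satisfying $p_\beta\forces\dot f(\check\beta)=\check x_\beta$. At each stage $\alpha<\kappa$, $\kappa$-closure applied to the chain constructed so far yields a lower bound, which is then refined to decide $\dot f(\check\alpha)$; the dependent choices needed to thread this recursion up to $\kappa$ are exactly what $\DC_\kappa$ provides. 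One last application of $\kappa$-closure, this time to the full $\kappa$-chain, produces a condition $p_*$ below every $p_\beta$, and $p_*$ forces $\dot f$ to equal the canonical name of $\tup{x_\beta:\beta<\kappa}$.

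For the converse, assume $\DC_\kappa$ fails, as witnessed by a relation $R$, and let $T$ be the tree of $R$-respecting sequences of length less than $\kappa$. Every node of $T$ has a proper extension; every chain of length $<\kappa$ in $T$ has an upper bound, namely its union, which is again $R$-respecting and has length $<\kappa$ on pain of contradicting $\neg\DC_\kappa$; and $T$ admits no branch of length $\kappa$ by $\neg\DC_\kappa$. Ordering $\PP=T$ by reverse extension, $\PP$ is $\kappa$-closed: chains of length $<\kappa$ in $\PP$ get lower bounds from the tree closure property, and chains of length $\kappa$ would correspond to $\kappa$-branches in $T$ and so do not exist. Since the set of nodes of length at least $\alpha$ is dense in $\PP$ for every $\alpha<\kappa$, any generic filter determines a branch of length $\kappa$ through $T$, i.e.\ a $\kappa$-sequence of ground-model elements in $V[G]$ that cannot itself lie in $V$ without contradicting $\neg\DC_\kappa$. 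Hence $\PP$ witnesses the failure of the forcing statement.

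The step I expect to be most delicate is the density argument in the backward direction: concluding that $T$ has nodes at every level below $\kappa$ implicitly requires $\DC_\alpha$ for $\alpha<\kappa$, which does not follow from $\neg\DC_\kappa$ alone. The natural workaround is to pick $\kappa_0\leq\kappa$ least with $\DC_{\kappa_0}$ failing and to construct the forcing at $\kappa_0$; for larger $\kappa$ the very same forcing is vacuously $\kappa$-closed (having no chains of length $\geq\kappa_0$ at all), and the added $\kappa_0$-sequence can be padded out to a new $\kappa$-sequence in $V[G]$. The forward direction is more routine, but one must still thread the choice of a lower bound through each successor stage of the $\DC_\kappa$ recursion, so that the resulting $\kappa$-chain is genuinely descending and the final gluing step via $\kappa$-closure is legitimate.
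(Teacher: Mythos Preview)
The paper does not prove this theorem; it is stated as folklore with a bare \qed, so there is nothing to compare your argument against. Your proof is the standard one and is correct. The subtlety you flag in the backward direction---that reaching every level of the tree already needs $\DC_{<\kappa}$---is real, and descending to the least $\kappa_0\leq\kappa$ where $\DC_{\kappa_0}$ fails is exactly the right fix: below $\kappa_0$ you have enough dependent choice for the density argument, and the tree has no chains of length $\geq\kappa_0$ at all (any such chain would union to a $\kappa_0$-branch), so closure at and above $\kappa_0$ is vacuous. One minor point worth making explicit: your forward direction ends by applying closure to the full $\kappa$-chain, so you are tacitly reading ``$\kappa$-closed'' to include lower bounds for descending $\kappa$-sequences, not merely ${<}\kappa$-sequences; this is the only reading under which the theorem as phrased is true (under the weaker reading, $\Add(\kappa,1)$ would be a counterexample even in $\ZFC$), so it is certainly the intended one.
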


In fact, since the axiom of choice is equivalent to the statement ``Every partial order contains a maximal antichain'', without the axiom of choice we cannot even ensure that there are maximal antichains in our forcing notion. Indeed, forcing without the axiom of choice admits quite a handicap.
\medskip

In order to prove the relative consistency of $\ZF+\lnot\AC$, something that cannot be done via forcing alone, Cohen incorporated the arguments developed and perfected by Fraenkel, Mostowski and Specker for constructing permutation models for set theory with atoms (or where the axiom of regularity fails in certain ways). This technique is now known as symmetric extensions, which is an additional structure to the technique of forcing, which allows us to identify an intermediate model where the axiom of choice fails. As the theory of forcing developed, so did our understanding of symmetric extensions. Serge Grigorieff showed in \cite{Grigorieff:1975} that symmetric extensions are exactly $\HOD(V\cup x)^{V[G]}$ for some set $x$, for example, and that $V[G]$ is a generic extension of all of its symmetric extensions.
\medskip

In this paper we develop a technique for iterating symmetric extensions. We can see precursors to this idea in the works of Gershon Sageev \cite{Sageev:1975,Sageev:1981} and Gordon Monro \cite{Monro:1973}. Sageev constructs two models using iterations and symmetries. Despite the fact that Sageev's constructions are not iterations of symmetric extensions per se, it is clear from his arguments that the solution is morally an iteration of symmetric extensions. Monro in his work constructs a sequence of models that each one is a ``morally'' a symmetric extension of the previous.

The motivation for developing such a technique is coming from two seemingly unrelated problems. The first motivation is forcing over $\ZF$ models. If we force over symmetric extensions---or models close enough to be symmetric extensions---we can try and treat the forcing as an iteration over a ground model satisfying $\ZFC$. There we can take some advantage of the axiom of choice and its consequences on forcing. So one goal is to help and simplify forcing over models of $\ZF$.

The second motivation is coming from the fact that it is sometimes easier to break a large problem into parts, and solve only a few parts of the problem at a time, over and over again, rather than the entire problem at once. This is self-evident by the ubiquity of problems solved via iterated forcing. This approach might allow for simplification of Sageev's works, which are extremely technical and are long overdue for a good revision. We hope to address these in the future.
\medskip

This paper starts by covering some preliminaries about iterated forcing and symmetric extensions.\footnote{Well, it actually begins with the introduction, motivation and structure of the paper, but we ignore that for obvious reasons here.} In sections \ref{section:ext aut} and \ref{section:ext filters} we will discuss how to extend the general structure of symmetric extensions to the iteration, and apply these in section~\ref{section:IS} to discuss the class of names and the forcing relation identifying the intermediate model. We will prove in section~\ref{section:justification} that the term ``iterated symmetric extensions'' is justified, by showing that the iteration of symmetric extensions is in fact a symmetric iteration and vice versa. Section~\ref{section:productive} will be devoted to a variant of the general method where some added restrictions will allow us to access filters which are not necessarily generic for the iteration. We finish the paper with an example of a symmetric iteration in which we subsume the work of Monro in \cite{Monro:1973} into this new framework, we discuss Kinna--Wagner Principles and construct a sequence of models, each an extension of the previous, where slowly more and more Kinna--Wagner Principles fail.

\subsection*{Motivation for the initiated} Before we begin, we would like to give some bits of motivation while ignoring any technical ignorance the reader might have at this point. If we try to iterate two symmetric extensions ``by hand'', say $\PP\ast\dot\QQ$, then we ultimately want to find a good definition for the class of names which are not only symmetric with respect to $\PP$, but then again symmetric with respect to $\QQ$ itself, this can be done awkwardly by hand for finite steps. The limit should be something akin to ``definable from finitely many objects so far'', which makes the ``by-hand'' approach much more difficult (since an object, even if guaranteed to exist, might have definitions and dependency on parameters according to the generic). Moreover, if one pays close attention to the details, there is a few immediate obstruction to the obvious way of applying the automorphisms even for the two steps.

So the goal is to try and generalize the structure of a symmetric extension: automorphisms, and then filters of groups. The main problem is that the structure from the first symmetric extension acts on the second one as well. We need to address these actions, and create a structure that is useful, even if somewhat complicated.

\section{Preliminaries}\label{section:prelim}
We work in $\ZFC$ throughout the entire paper. The main concern of this work is forcing, in this section we establish some notation and terminology. We say that $\tup{\PP,1_\PP,\leq_\PP}$ is a notion of forcing if $\PP$ is a preordered set and $1_\PP$ is a maximum element. The elements of $\PP$ are called conditions, and if $p,q$ are two conditions in $\PP$ we say that $q$ is \textit{stronger} than $p$ if $q\leq_\PP p$. We will usually omit $1_\PP$ from the definition, and when the context allows we will omit $\PP$ from the subscript as well.

We denote $\PP$-names as $\dot x$, and we adopt the convention that the elements of $\dot x$ are pairs $\tup{p,\dot y}$ where $p$ is a condition in $\PP$ and $\dot y$ is a $\PP$-name of a lower rank, indeed this lets us have a natural notion of rank on $\PP$-names, denoted by $\rank_\PP(\dot x)$ (or $\rank(\dot x)$ when $\PP$ is clear from context\footnote{While rank functions are fairly ubiquitous in set theory, and often refer to the von Neumann rank, we will almost always refer to the name ranks instead, and make any reference to other ranks explicit.}). If $x$ is in the ground model, we will denote by $\check x$ the canonical name for $x$, defined by recursion as $\check x=\{\tup{1,\check y}\mid y\in x\}$. We say that a condition $p$  \textit{appears} in $\dot x$, if there is some $\dot y$ such that $\tup{p,\dot y}\in\dot x$; similarly we say that $\dot y$ appears in $\dot x$, if there is some condition $p$ such that $\tup{p,\dot y}\in\dot x$.

\begin{definition}We say that a $\PP$-name $\dot x$ is \textit{open} if whenever $\tup{p,\dot y}\in\dot x$ and $q\leq p$, then $\tup{q,\dot y}\in\dot x$. We say that $\dot x$ is a \textit{simple} name if every $\dot y$ which appears in $\dot x$ is of the form $\check a$ for some $a$.
\end{definition}

\begin{proposition}Suppose that $\dot x$ is a name such that for some $A$, $p\forces\dot x\subseteq\check A$, then there is an open and simple name $\dot x_0$ such that $p\forces\dot x=\dot x_0$.
\end{proposition}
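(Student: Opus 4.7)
The plan is to construct $\dot x_0$ explicitly as the set of all pairs $\tup{q,\check a}$ with $a\in A$, $q\leq p$, and $q\forces\check a\in\dot x$. By construction every name appearing in $\dot x_0$ is of the form $\check a$, so $\dot x_0$ is simple; and openness is immediate from the downward closure of the forcing relation, since if $\tup{q,\check a}\in\dot x_0$ and $r\leq q$ then $r\leq p$ and $r\forces\check a\in\dot x$, so $\tup{r,\check a}\in\dot x_0$ as well.

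The main step is to verify that $p\forces\dot x=\dot x_0$, which I would handle semantically via the forcing theorem. Let $G$ be any generic filter containing $p$ and show $\dot x^G=\dot x_0^G$. The inclusion $\dot x_0^G\subseteq\dot x^G$ is immediate from the definition: any witness $q\in G$ below $p$ with $q\forces\check a\in\dot x$ forces $a=\check a^G$ into $\dot x^G$. For the converse, given $a\in\dot x^G$, the hypothesis $p\forces\dot x\subseteq\check A$ together with $p\in G$ yields $a\in A$, and some condition $q\in G$ already forces $\check a\in\dot x$. Replacing $q$ by a common extension $q'\in G$ of $q$ and $p$ produces a condition with $q'\leq p$ and $q'\forces\check a\in\dot x$, so $\tup{q',\check a}\in\dot x_0$ and therefore $a\in\dot x_0^G$.

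There is no substantial obstacle. The only mild subtlety is that the definition of $\dot x_0$ hardwires the constraint $q\leq p$, so in the final step one must descend below $p$ when extracting a witness from $G$; this is arranged purely by the filter property applied to the pair $\{q,p\}\subseteq G$, without invoking any stronger principle. Having established $\dot x^G=\dot x_0^G$ for every generic $G$ containing $p$, the forcing theorem delivers $p\forces\dot x=\dot x_0$.
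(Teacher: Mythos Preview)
Your proof is correct and follows essentially the same approach as the paper: define the obvious candidate $\dot x_0$ from the conditions forcing $\check a\in\dot x$, then verify $p\forces\dot x=\dot x_0$. The only differences are cosmetic---you restrict to conditions $q\leq p$ (the paper does not), and you verify the equality semantically via generic filters and the forcing theorem, whereas the paper argues syntactically by contradiction directly from the forcing relation.
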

\begin{proof}
Define $\dot x_0=\{\tup{p,\check a}\mid p\forces\check a\in\dot x\}$, then by the properties of the forcing relation $\dot x_0$ is an open name, and by the definition of $\dot x_0$ it is also simple. If $p$ does not force equality, let $q\leq p$ such that $q\forces\dot x\neq\dot x_0$. Assume first there is some $r\leq q$ such that for some $\dot y$, $r\forces\dot y\in\dot x\land\dot y\notin\dot x_0$. By the assumption that $r\leq q\leq p$ we know that $r\forces\dot x\subseteq\check A$, then there is some $r'\leq r$ and $a\in A$ such that $r'\forces\dot y=\check a$. But now $r'\forces\check a\in\dot x$, and by definition $\tup{r',\check a}\in\dot x_0$ so in particular $r'\forces\check a\in\dot x_0$, which is a contradiction. The other inclusion is similar, and we get that no $q\leq p$ can force that $\dot x\neq\dot x_0$, so $p\forces\dot x=\dot x_0$.
\end{proof}

Suppose that $\{\dot x_i\mid i\in I\}$ is a collection of $\PP$-names, we write $\{\dot x_i\mid i\in I\}^\bullet$ for the $\PP$-name $\{\tup{1,\dot x_i}\mid i\in I\}$, and we call these \textit{$\bullet$-names}. Using this notation, for example, $\check x=\{\check y\mid y\in x\}^\bullet$. This notation extends naturally to ordered pairs and sequences, so $\tup{\dot x,\dot y}^\bullet=\{\{\dot x\}^\bullet,\{\dot x,\dot y\}^\bullet\}^\bullet$, and so on.

Let $\dot x$ be a $\PP$-name and $p\in\PP$, we will write $\dot x\restriction p$ to be the name $p$ interprets correctly as $\dot x$, but any incompatible condition interprets as the empty set. Namely,\[\dot x\restriction p = \{\tup{q,\dot y}\mid q\leq p,\ q\forces\dot y\in\dot x,\ \text{ and }\ \dot y\text{ appears in }\dot x\}.\]
One might wonder why this is not an inductive definition in which we take $\dot y\restriction p$ or even $\dot y\restriction q$. The matter of fact is that usually this is unneeded in most uses of the notation.

Suppose that $\dot x$ and $\dot A$ are $\PP$-names. We say that $\dot x$ was \textit{obtained by mixing} from names which appear in $\dot A$ if there is a maximal antichain $D$ and for every $p\in D$, some $\dot a_p$ which appears in $\dot A$ such that $\dot x\restriction p=\dot a_p\restriction p$.
\subsection{Iterated forcing}
In this work we will only consider finite support iterations. In different sources one can find slightly different presentations of finite support iterations. When we work in $\ZFC$ and we just want to take generic extensions, all these presentations of the iteration poset are equivalent. However, once you want to apply automorphisms to a forcing, the choice of the preordered set can make a difference. For example, it is easy to construct a finitely splitting tree which is rigid, but forcing equivalent to the full binary tree (in other words, a Cohen forcing).

\begin{definition}Suppose that $\PP$ is a notion of forcing and $\dot\QQ$ is a $\PP$-name such that $1_\PP\forces_\PP``\dot\QQ$ is a notion of forcing''. The \textit{Kunen iteration} is the preordered set given by $\{\tup{p,\dot q}\mid p\in\PP,\dot q\text{ appears in }\dot\QQ, p\forces_\PP\dot q\in\dot\QQ\}$; the \textit{Jech iteration} is the set \[\{\tup{p,\dot q}\mid p\in\PP,\dot q\text{ obtained by mixing names which appear in }\dot\QQ, 1_\PP\forces_\PP\dot q\in\dot\QQ\}.\]

In both cases, $\tup{p,\dot q}\leq\tup{p',\dot q'}$ if $p\leq_\PP p'$ and $p\forces\dot q\leq_{\dot\QQ}\dot q'$. We denote this forcing by $\PP\ast\dot\QQ$.
\end{definition}
It is a standard exercise to show that for a two-step iteration using Kunen or Jech iterations is the same. The proof allows us to extend this to any length of iteration, provided that the limit steps are taken with finite support. While we only consider Jech iterations in this work, the Kunen iterations will make an important appearance in \autoref{subsection:Kunen}.

One important remark that should be made here is that in most books concerning iterated forcing, it is often the case that the initial assumption is that without loss of generality $\dot\QQ$ is going to be a set of ordinals, or something like this in order to ensure some canonicity in the names for the conditions. If we plan on forcing over models where the axiom of choice fails, this is going to be a problem, as in all likelihood we will be interested in forcings which cannot be well-ordered. This is why in the Kunen iteration we require explicitly that $\dot q$ appears in $\dot\QQ$, and while in the Jech iterations we allowed a greater freedom, but we still ensure that the names were mixed from our original $\dot\QQ$.

We finish by pointing out that neither definition of the iteration preordered set is necessarily separative. It will often be convenient to jump back and forth between the presentation of $\PP\ast\dot\QQ$ as a set of ordered pairs, and the separative quotient. So we will allow ourselves to replace conditions which are equivalent when it is convenient, but only when it is clear that we are allowed to do so. We will also take advantage of the fact that if $p\forces_\PP\dot q=\dot q'$, then $\tup{p,\dot q}$ and $\tup{p,\dot q'}$ are equivalent in the sense of the separative quotient.

\subsection{Symmetric extensions}
Let $\PP$ be a notion of forcing, if $\pi\in\aut(\PP)$, we can extend it by recursion to an automorphism of names:
\[\pi\dot x=\{\tup{\pi p,\pi\dot y}\mid\tup{p,\dot y}\in\dot x\}.\]
\begin{lemma}[The Symmetry Lemma]\label{lemma:symmetry}
Suppose that $\PP$ is a notion of forcing, $\pi$ an automorphism of $\PP$ and $\dot x_1,\ldots,\dot x_n$ are $\PP$-names. Then for every $\varphi(u_1,\ldots,u_n)$ in the language of forcing,
\[p\forces\varphi(\dot x_1,\ldots,\dot x_n)\iff\pi p\forces\varphi(\pi\dot x_1,\ldots,\pi\dot x_n).\qed\]
\end{lemma}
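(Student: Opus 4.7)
The plan is a straightforward induction on the complexity of $\varphi$, with the atomic cases handled by a simultaneous induction on the ranks of the names involved. Since this is the classical Symmetry Lemma, the proof is essentially bookkeeping: one checks that $\pi$ preserves all of the ingredients in the recursive definition of the forcing relation.

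First I would dispose of the atomic cases. By the recursive definition of $\pi$ on names, $\pi$ preserves the rank of names, so an induction on $\rank(\dot x) + \rank(\dot y)$ is available. For $p \forces \dot x \in \dot y$, unfold the definition: $p \forces \dot x \in \dot y$ iff the set of $q \leq p$ for which there exists $\tup{r, \dot z} \in \dot y$ with $q \leq r$ and $q \forces \dot x = \dot z$ is dense below $p$. Since $\pi$ is an automorphism of $\PP$, it maps $\{q : q \leq p\}$ bijectively onto $\{q' : q' \leq \pi p\}$ and preserves the relation $\leq$; moreover $\tup{r, \dot z} \in \dot y$ iff $\tup{\pi r, \pi \dot z} \in \pi \dot y$ by definition of $\pi \dot y$. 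Invoking the inductive hypothesis on $q \forces \dot x = \dot z$ (which has strictly smaller rank sum) translates the density statement below $p$ into the corresponding density statement below $\pi p$. The case $p \forces \dot x = \dot y$ is handled the same way by unfolding its recursive definition in terms of forcing membership for names of lower rank.

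Next I would handle the logical connectives by outer induction on $\varphi$. The case $\varphi = \neg \psi$ uses the fact that $p \forces \neg \psi$ iff no $q \leq p$ forces $\psi$; since $\pi$ is an order isomorphism, this translates verbatim using the inductive hypothesis on $\psi$. Conjunction is immediate, and the existential quantifier $\exists u\, \psi(u, \dot x_1, \ldots, \dot x_n)$ is handled by noting that $p \forces \exists u\, \psi$ iff below every $q \leq p$ some $r \leq q$ and some name $\dot y$ satisfy $r \forces \psi(\dot y, \dot x_1, \ldots, \dot x_n)$; applying $\pi$ and using that $\dot y \mapsto \pi \dot y$ is a bijection on the class of $\PP$-names, together with the inductive hypothesis on $\psi$, yields the claim for $\pi p$.

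There is no real obstacle here, but the one spot to be careful about is the atomic base case, since one must verify that every clause of the recursive definition of $p \forces \dot x \in \dot y$ and $p \forces \dot x = \dot y$ is stable under the simultaneous application of $\pi$ to $p$ and to every name involved, and that the inductive hypothesis is correctly invoked on pairs of strictly smaller rank. Once this is in hand, the outer induction on formulas is entirely routine.
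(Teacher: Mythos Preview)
Your proof is correct and is the standard argument for the classical Symmetry Lemma. Note, however, that the paper does not actually prove this lemma: the \qed appearing at the end of the statement indicates it is being quoted as a well-known result (see, e.g., Jech's \emph{Set Theory}, Lemma~14.37), so there is no proof in the paper to compare against. Your sketch is exactly the textbook route.
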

The following proposition is an easy corollary of the Symmetry Lemma.
\begin{proposition}\label{prop:open names and symmetries}
Suppose that $\dot x$ is a $\PP$-name, and $\dot x'$ is the open name defined as \[\dot x'=\{\tup{p,\dot y}\mid p\forces\dot y\in\dot x\text{ and }\dot y\text{ appears in }\dot x\}.\] Then for every automorphism $\pi$, $1\forces\pi\dot x=\pi\dot x'$.\qed
\end{proposition}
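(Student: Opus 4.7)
The plan is to reduce the statement to the ground-model identity $1\forces\dot x=\dot x'$ and then invoke the Symmetry Lemma. Indeed, once we know $1\forces\dot x=\dot x'$, applying \autoref{lemma:symmetry} with the formula $\varphi(u,v)\equiv u=v$ and the name tuple $\dot x,\dot x'$ yields $\pi 1\forces\pi\dot x=\pi\dot x'$; since $\pi\in\aut(\PP)$ fixes $1$, this is exactly the conclusion.

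So the only real task is to verify $1\forces\dot x=\dot x'$, and this amounts to checking both inclusions densely. For the inclusion $\dot x'\subseteq\dot x$, if $\tup{p,\dot y}\in\dot x'$ then by construction $p\forces\dot y\in\dot x$, which immediately gives $p\forces\dot y\in\dot x'\to\dot y\in\dot x$. For the other inclusion, suppose $p\forces\dot y\in\dot x$. Given any $q\leq p$, there exist $r\leq q$ and a name $\dot z$ appearing in $\dot x$ with $r\forces\dot z=\dot y$ (and $r$ is below the condition pairing $\dot z$ in $\dot x$), so $r\forces\dot z\in\dot x$ and thus $\tup{r,\dot z}\in\dot x'$; consequently $r\forces\dot y\in\dot x'$. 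As such $r$'s are dense below $p$, we get $p\forces\dot y\in\dot x'$.

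There is no real obstacle here: the content is just that the openness condition defining $\dot x'$ is built from the forcing relation itself, so $\dot x$ and $\dot x'$ are forced equal outright, and the Symmetry Lemma then transports this equality through $\pi$ without any further work. In particular, nothing about the specific structure of $\pi$ or about simplicity is used — only that $\pi$ is an automorphism and that $1$ is its unique fixed top element.
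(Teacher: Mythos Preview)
Your proof is correct and follows exactly the route the paper indicates: the paper states this proposition as an immediate corollary of the Symmetry Lemma (with a \qed\ and no written proof), and your argument makes this explicit by first verifying $1\forces\dot x=\dot x'$ and then applying \autoref{lemma:symmetry}. The only minor remark is that the inclusion $\dot x\subseteq\dot x'$ is in fact a literal containment of sets of pairs (if $\tup{p,\dot y}\in\dot x$ then $\dot y$ appears in $\dot x$ and $p\forces\dot y\in\dot x$, so $\tup{p,\dot y}\in\dot x'$), so your density argument there, while correct, is more than is needed.
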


If $\sG$ is a group, we say that $\sF$ is a \textit{filter of subgroups} over $\sG$ if $\sF$ is a non-empty collection of subgroups of $\sG$, closed under supergroups and finite intersections. If the trivial subgroup is in $\sF$ we say that it is the improper filter.\footnote{It is often the case that the improper filter is not considered a filter, but allowing this can be fruitful, as we will see in \autoref{section:toy}.} Such $\sF$ is called \textit{normal} if whenever $H\in\sF$ and $\pi\in\sG$, then the conjugated subgroup $\pi H\pi^{-1}\in\sF$ as well.

\begin{remark}
Note that it is enough to talk about a filter base, or rather a normal filter base, rather than talking about filters. This will be convenient later when we move between models and a filter in a small model might ``only'' be a filter base in the larger model. But we will largely ignore this problem throughout the text.
\end{remark}

\begin{definition}We say that $\tup{\PP,\sG,\sF}$ is a \textit{symmetric system} if $\PP$ is a notion of forcing, $\sG$ is a group of automorphisms of $\PP$ and $\sF$ is a normal filter of subgroups over $\sG$. We say that $\dot x$ is \textit{$\sF$-symmetric} if there exists $H\in\sF$ such that for all $\pi\in H$, $\pi\dot x=\dot x$. This definition extends by recursion: $\dot x$ is \textit{hereditarily $\sF$-symmetric}, if $\dot x$ is $\sF$-symmetric and every name which appears in $\dot x$ is hereditarily $\sF$-symmetric. We denote by $\HS_\sF$ the class of all hereditarily $\sF$-symmetric names.
\end{definition}
\begin{proposition}
Let $\dot x$ be an $\sF$-symmetric with $H\in\sF$ witnessing that, and let $\dot x'$ be as defined in \autoref{prop:open names and symmetries}. Then $H$ witnesses that $\dot x'$ is $\sF$-symmetric.\qed
\end{proposition}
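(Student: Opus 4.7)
The plan is to show directly that $\pi\dot x'=\dot x'$ for every $\pi\in H$, by unpacking the definition of the action and appealing to the Symmetry Lemma. By definition,
\[\pi\dot x' = \{\tup{\pi p,\pi\dot y}\mid p\forces\dot y\in\dot x\text{ and }\dot y\text{ appears in }\dot x\},\]
so I need to check that a pair $\tup{q,\dot z}$ belongs to this set if and only if $q\forces\dot z\in\dot x$ and $\dot z$ appears in $\dot x$.

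The first step is the observation that, since $\pi\in H$ fixes $\dot x$ as a set of pairs, $\pi$ permutes the collection of names which appear in $\dot x$: if $\dot y$ appears in $\dot x$ then $\tup{p,\dot y}\in\dot x$ for some $p$, whence $\tup{\pi p,\pi\dot y}\in\pi\dot x=\dot x$, so $\pi\dot y$ appears in $\dot x$; applying the same argument to $\pi^{-1}$ gives the converse. The second step is to invoke \autoref{lemma:symmetry}: $p\forces\dot y\in\dot x$ iff $\pi p\forces\pi\dot y\in\pi\dot x$, and again $\pi\dot x=\dot x$.

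Combining these two facts and reindexing by $q=\pi p$, $\dot z=\pi\dot y$, the set $\pi\dot x'$ rewrites as exactly $\{\tup{q,\dot z}\mid q\forces\dot z\in\dot x\text{ and }\dot z\text{ appears in }\dot x\}=\dot x'$, which gives $\pi\dot x'=\dot x'$ as desired. There is no real obstacle here; the only subtle point is being honest about the fact that ``appears in $\dot x$'' is a predicate about $\dot x$ as a literal set of pairs, and hence is stable under any $\pi$ that fixes that set.
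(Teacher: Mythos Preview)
Your proof is correct and is exactly the intended argument; the paper marks this proposition with a bare \qed\ and gives no proof, so you have simply filled in the routine verification the author left to the reader.
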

\begin{theorem}
Suppose that $\tup{\PP,\sG,\sF}$ is a symmetric system and $G\subseteq\PP$ is a $V$-generic filter. Denote by $N$ the class $\HS_\sF^G=\{\dot x^G\mid\dot x\in\HS_\sF\}$, then $N\models\ZF$ and $V\subseteq N\subseteq V[G]$.\qed
\end{theorem}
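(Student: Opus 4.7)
The plan is to verify $V \subseteq N \subseteq V[G]$ and then each axiom of $\ZF$ in $N$. For the inclusions, an easy induction on rank shows $\check x \in \HS_\sF$ for every $x \in V$, since $\pi\check x = \check x$ for all $\pi \in \sG$, so $\sG \in \sF$ is a uniform witness; this gives $V \subseteq N$, while $N \subseteq V[G]$ is immediate from $\HS_\sF \subseteq V^\PP$. Transitivity of $N$ is built into the definition: any name appearing in a hereditarily symmetric name is itself hereditarily symmetric, so $N$ inherits Extensionality and Foundation from $V[G]$.

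For Pairing, Union and Infinity I would exhibit the obvious witnessing names and observe that their stabilizers are finite intersections of groups in $\sF$. Thus $\set{\dot x, \dot y}^\bullet$ is stabilized by $\sym(\dot x) \cap \sym(\dot y)$, the natural $\bullet$-name for $\bigcup \dot x^G$ is stabilized by $\sym(\dot x)$, and $\check\omega$ is stabilized by all of $\sG$.

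The main step, and the principal obstacle, is Separation. The key preliminary observation is that $\HS_\sF$ is closed under the action of $\sG$: if $\pi \in \sG$ and $\dot w \in \HS_\sF$ is witnessed by $H \in \sF$, then by normality $\pi H \pi^{-1} \in \sF$ witnesses $\pi \dot w \in \HS_\sF$, and hereditarily so by induction on rank. Consequently $\HS_\sF$ is $V$-definable from $\PP, \sG, \sF$, so relativizations to $\HS_\sF$ may be used inside the forcing relation. Given $\dot x, \dot p_1, \ldots, \dot p_k \in \HS_\sF$ and a formula $\varphi$, define
\[\dot y = \set{\tup{q, \dot z} \mid \dot z \text{ appears in } \dot x \text{ and } q \forces \dot z \in \dot x \land \varphi^{\HS_\sF}(\dot z, \dot p_1, \ldots, \dot p_k)}.\]
Put $H = \sym(\dot x) \cap \bigcap_i \sym(\dot p_i) \in \sF$. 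For $\pi \in H$ the Symmetry Lemma gives $\pi \dot y = \dot y$, using that $\pi$ fixes all the parameters and permutes the names appearing in $\dot x$ among themselves; every name appearing in $\dot y$ already appears in $\dot x$, so $\dot y \in \HS_\sF$ and interprets to the intended subset.

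Replacement follows by a reflection-style argument in $V$: for each $a \in \dot x^G$ that has an image under the class function, bound the rank of some hereditarily symmetric name for this image, collect candidate names into a $\bullet$-name, and apply the Separation construction. For Power Set, given $\dot x \in \HS_\sF$, I would use the open-and-simple-name proposition to restrict attention to names of rank bounded by some $\alpha$ depending only on $\rank\dot x$, then form the $\bullet$-name $\dot P$ of all $\dot y \in \HS_\sF$ with $\rank \dot y < \alpha$ and $1 \forces \dot y \subseteq \dot x$. The subgroup $\sym(\dot x)$ stabilizes $\dot P$ because it permutes this collection, preserving $\HS_\sF$-membership, rank, and the forced inclusion.
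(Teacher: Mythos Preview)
Your argument is correct and follows the standard axiom-by-axiom verification. The paper itself does not prove this theorem---note the terminal \qed in the statement---citing it instead as background from Jech's books.

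For comparison, when the paper proves the analogous result for iterated symmetries (\autoref{thm:ZF models}), it takes the shorter route: show the class is transitive, almost universal, and closed under G\"odel operations. Almost-universality is witnessed by the names $\{\dot x \in \HS_\sF \mid \rank(\dot x) < \eta\}^\bullet$, each fixed by all of $\sG$ thanks to normality of $\sF$. Your Separation argument encodes the same normality-based closure, so the two approaches share the essential idea; the G\"odel-operations route just packages Replacement and Power Set together rather than building explicit witnesses.

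One minor correction: in your Power Set step, the open-and-simple-name proposition does not apply---it is about names for subsets of a ground-model $\check A$, not of an arbitrary $\dot x$. The rank bound you want comes instead from replacing any $\dot y \in \HS_\sF$ naming a subset of $\dot x^G$ by $\dot y' = \{\tup{q,\dot z} \mid \dot z \text{ appears in } \dot x,\ q \forces \dot z \in \dot y \land \dot z \in \dot x\}$; this has rank bounded in terms of $\dot x$ alone, is stabilized by $\sym(\dot x) \cap \sym(\dot y)$, and satisfies $1 \forces \dot y' \subseteq \dot x$. With that fix your $\dot P$ works.
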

To read more about symmetric extensions see \cite{Jech:AC}, and chapter 15 in \cite{Jech:ST}.
\section{Extending automorphisms}\label{section:ext aut}
\subsection{Two step iterations}
Through this entire section, $\PP$ will denote some notion of forcing, and $\dot\QQ$ will denote a name for a notion of forcing. Let us investigate how to extend automorphisms of $\PP$ and $\dot\QQ$ to automorphisms of $\PP\ast\dot\QQ$.
\begin{proposition}\label{prop:aut-ext}
Let $\dot\sigma$ a $\PP$-name such that $1_\PP\forces_\PP\dot\sigma\in\aut(\dot\QQ)$. Then the map $\tup{p,\dot q}\mapsto\tup{p,\dot\sigma(\dot q)}$ is an automorphism of $\PP\ast\dot\QQ$.
\end{proposition}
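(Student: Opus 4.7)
The plan is to check in turn that the map $\Phi:\tup{p,\dot q}\mapsto\tup{p,\dot\sigma(\dot q)}$ (i) takes values in $\PP\ast\dot\QQ$, (ii) preserves and reflects the order, and (iii) is a bijection. Before any of that, the notation $\dot\sigma(\dot q)$ needs to be unambiguous. Since $1_\PP\forces_\PP\dot\sigma\in\aut(\dot\QQ)$ and $1_\PP\forces\dot q\in\dot\QQ$, for each $p\in\PP$ there is some $p'\leq p$ and some $\dot a$ appearing in $\dot\QQ$ with $p'\forces\dot\sigma(\dot q)=\dot a$; choosing such data along a maximal antichain and using the mixing construction from the preliminaries, I can pick a $\PP$-name $\dot\sigma(\dot q)$ which is obtained by mixing names appearing in $\dot\QQ$ and which is $1_\PP$-forced to equal the literal application of $\dot\sigma$ to $\dot q$. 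Under the paper's convention that conditions with $1_\PP$-forced equal second coordinates may be identified, the choice is harmless, and $\tup{p,\dot\sigma(\dot q)}$ now lies in $\PP\ast\dot\QQ$ in its Jech presentation.

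For order-preservation, suppose $\tup{p,\dot q}\leq\tup{p',\dot q'}$, so $p\leq_\PP p'$ and $p\forces\dot q\leq_{\dot\QQ}\dot q'$. Since $\dot\sigma$ is $1_\PP$-forced to be an automorphism of $\dot\QQ$, it is in particular $1_\PP$-forced to be order-preserving, so $p\forces\dot\sigma(\dot q)\leq_{\dot\QQ}\dot\sigma(\dot q')$, whence $\tup{p,\dot\sigma(\dot q)}\leq\tup{p',\dot\sigma(\dot q')}$. The converse implication is entirely analogous, applied to the inverse map described below.

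For bijectivity, the assumption that $\dot\sigma$ is $1_\PP$-forced to be an automorphism of $\dot\QQ$ yields a $\PP$-name $\dot\sigma^{-1}$ with $1_\PP\forces\dot\sigma^{-1}\in\aut(\dot\QQ)$ and $1_\PP\forces\dot\sigma\circ\dot\sigma^{-1}=\dot\sigma^{-1}\circ\dot\sigma=\id_{\dot\QQ}$; the map $\tup{p,\dot q}\mapsto\tup{p,\dot\sigma^{-1}(\dot q)}$ is then, by the same construction as for $\Phi$, a well-defined map into $\PP\ast\dot\QQ$, and its composition with $\Phi$ in either order sends $\tup{p,\dot q}$ to a condition with the same first coordinate and second coordinate $1_\PP$-forced equal to $\dot q$, hence equal to $\tup{p,\dot q}$ under the identification. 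The only genuine technical obstacle is the very first step: guaranteeing that some representative of $\dot\sigma(\dot q)$ lies in the shape required by the Jech iteration (obtained by mixing names appearing in $\dot\QQ$). This is exactly the reason the Jech presentation of $\PP\ast\dot\QQ$, rather than the Kunen one, is being used here; once that setup is in place the rest is routine manipulation of the forcing relation and requires nothing beyond the definitions.
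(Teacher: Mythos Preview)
Your proof is correct and follows essentially the same approach as the paper: both handle well-definedness of $\dot\sigma(\dot q)$ via mixing over a pre-dense set, and both derive order-preservation directly from $1_\PP\forces\dot\sigma\in\aut(\dot\QQ)$. The only cosmetic difference is that the paper argues injectivity and surjectivity separately from the forced injectivity and surjectivity of $\dot\sigma$, while you package bijectivity by exhibiting the inverse map $\tup{p,\dot q}\mapsto\tup{p,\dot\sigma^{-1}(\dot q)}$; these amount to the same thing.
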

\begin{proof}
First we need to argue why this is a well-defined map, note that since $\dot\sigma$ is guaranteed to be an automorphism of $\dot\QQ$ and $\dot q$ is guaranteed to be a condition of $\dot\QQ$, we can find a pre-dense set where the value of $\dot\sigma(\dot q)$ is decided and mix over it. Moreover, since $\dot\sigma$ is a name for a well-defined function, if both $\dot q_0$ and $\dot q_1$ are forced by $1_\PP$ to be equal to $\dot\sigma(\dot q)$, then it is necessarily the case that $1_\PP\forces_\PP\dot q_0=\dot q_1$.

Injectivity follows from the same argument. If $\tup{p,\dot\sigma(\dot q)}=\tup{p,\dot\sigma(\dot q')}$, then by the fact $1_\PP\forces``\dot\sigma\text{ is injective}"$ it follows that $p\forces_\PP\dot q=\dot q'$, and therefore the two conditions are equivalent. The argument for surjectivity is also the same. So it remains to prove that the order is preserved.

Suppose that $\tup{p_1,\dot q_1}\leq_{\PP\ast\dot\QQ}\tup{p_2,\dot q_2}$, then $p_1\leq_\PP p_2$ and $p_1\forces_\PP\dot q_1\leq_{\dot\QQ}\dot q_2$. But because $p_1\forces_\PP\dot\sigma\in\aut(\dot\QQ)$ we have that \[p_1\forces_\PP\dot q_1\leq_{\dot\QQ}\dot q_2\leftrightarrow\dot\sigma(\dot q_1)\leq_{\dot\QQ}\dot\sigma(\dot q_2).\] Therefore $\tup{p_1,\dot q_1}\leq_{\PP\ast\dot\QQ}\tup{p_2,\dot q_2}$ if and only if $\tup{p_1,\dot\sigma(\dot q_1)}\leq_{\PP\ast\dot\QQ}\tup{p_2,\dot\sigma(\dot q_2)}$.
\end{proof}

One might feel there is a simple analogy to the case when taking $\pi\in\aut(\PP)$ and considering $\tup{p,\dot q}\mapsto\tup{\pi p,\dot q}$. But this is not going to be order preserving, because there is no guarantee that $\pi p\forces_\PP\dot q\leq_{\dot\QQ}\dot q'$ just by assuming $p\forces_\PP\dot q\leq_{\dot\QQ}\dot q'$. In fact there is no guarantee that $\pi p\forces_\PP\pi\dot q\leq_{\dot\QQ}\pi\dot q'$. The reason is simple, it might be the case that $\pi$ does not preserve $\dot\QQ$ or its order. So not every $\pi$ can do the job, just those which preserve enough of the information about $\dot\QQ$.

\begin{definition}Let $\dot A$ is a $\PP$-name and $\pi\in\aut(\PP)$. We say that $\pi$ \textit{respects} $\dot A$ if $1_\PP\forces\dot A=\pi\dot A$. When $\dot A$ is implicitly endowed with some structure (e.g.\ a notion of forcing), then we require the entire structure to be respected.
\end{definition}
Note that the collection of all automorphisms which respect $\dot A$ form a subgroup of $\aut(\PP)$. Also note that if $\pi$ respects $\dot A$, it will invariably respect anything reasonably definable from $\dot A$ (like the name for a power set of $\dot A$, and so on).

\begin{proposition}If $\pi\in\aut(\PP)$ respects $\dot\QQ$, then the map $\tup{p,\dot q}\mapsto\tup{\pi p,\pi\dot q}$ is an automorphism of $\PP\ast\dot\QQ$.\qed
\end{proposition}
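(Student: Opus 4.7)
The plan is to verify three things about the map $\sigma\colon\tup{p,\dot q}\mapsto\tup{\pi p,\pi\dot q}$: that its image lies in $\PP\ast\dot\QQ$ (up to the paper's convention of identifying $\tup{p,\dot q}$ with $\tup{p,\dot q'}$ when $p\forces\dot q=\dot q'$), that it preserves the order, and that it is bijective. The two tools I will use throughout are the Symmetry Lemma (\autoref{lemma:symmetry}) and the hypothesis that $\pi$ respects the full structure on $\dot\QQ$, which I read as $1_\PP\forces\pi\dot\QQ=\dot\QQ$ together with the analogous statements for the order relation $\leq_{\dot\QQ}$ and the maximum element.

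For well-definedness, $\pi p\in\PP$ is immediate. For the second coordinate, write $\dot q$ as a mixture of names $\dot a_r$ appearing in $\dot\QQ$ along a maximal antichain $D$, so $\dot q\restriction r=\dot a_r\restriction r$ for each $r\in D$. A direct application of the Symmetry Lemma shows that $\pi$ commutes with the restriction operation, giving $\pi\dot q\restriction\pi r=(\pi\dot a_r)\restriction\pi r$, which exhibits $\pi\dot q$ as a mixture along the maximal antichain $\pi D$. The names $\pi\dot a_r$ appear in $\pi\dot\QQ$ rather than literally in $\dot\QQ$; but $1_\PP\forces\pi\dot\QQ=\dot\QQ$ means each $\pi\dot a_r$ is forced by $1_\PP$ to equal some name which does appear in $\dot\QQ$, so we may replace $\pi\dot q$ by an equivalent name of the required syntactic form. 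The remaining clause $1_\PP\forces\dot q\in\dot\QQ$ transforms via the Symmetry Lemma into $1_\PP\forces\pi\dot q\in\pi\dot\QQ$, which coincides with $1_\PP\forces\pi\dot q\in\dot\QQ$ by the respect hypothesis.

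Order preservation then falls out of the Symmetry Lemma: $\tup{p_1,\dot q_1}\leq\tup{p_2,\dot q_2}$ means $p_1\leq_\PP p_2$ and $p_1\forces\dot q_1\leq_{\dot\QQ}\dot q_2$, which transforms into $\pi p_1\leq_\PP\pi p_2$ and $\pi p_1\forces\pi\dot q_1\leq_{\pi\dot\QQ}\pi\dot q_2$, and the respect hypothesis on $\leq_{\dot\QQ}$ lets me rewrite $\leq_{\pi\dot\QQ}$ as $\leq_{\dot\QQ}$. For bijectivity, the cleanest route is to observe that $\pi^{-1}$ also respects $\dot\QQ$: applying the Symmetry Lemma to $1_\PP\forces\dot\QQ=\pi\dot\QQ$ with the automorphism $\pi^{-1}$ yields $1_\PP\forces\pi^{-1}\dot\QQ=\dot\QQ$. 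The same recipe with $\pi^{-1}$ in place of $\pi$ then produces a two-sided inverse of $\sigma$, and also provides the converse direction of order preservation.

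The only genuine subtlety is the syntactic bookkeeping in the well-definedness step: $\pi\dot q$ is not literally a name obtained by mixing from names that appear in $\dot\QQ$, only a name equivalent to one under forced equality by $1_\PP$. This is absorbed by the paper's convention of identifying such equivalent conditions, and once this is accepted the argument is routine.
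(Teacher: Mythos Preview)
The paper gives no proof of this proposition at all---it is stated with an immediate \qed, presumably because the argument mirrors the preceding proposition about $\dot\sigma\in\aut(\dot\QQ)$ and is considered routine once the Symmetry Lemma and the notion of ``respects'' are in hand. Your verification is correct and supplies exactly the details the paper omits: well-definedness via the Symmetry Lemma and the respect hypothesis, order preservation by the same, and bijectivity by observing that $\pi^{-1}$ also respects $\dot\QQ$.

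You are also right to flag the syntactic point about mixing as the only real wrinkle. The Jech iteration requires $\dot q$ to be obtained by mixing names that literally appear in $\dot\QQ$, and $\pi\dot q$ is a priori only a mixture of names appearing in $\pi\dot\QQ$. Your resolution---replacing $\pi\dot q$ by an equivalent name of the correct syntactic form, licensed by the paper's explicit convention that $\tup{p,\dot q}$ and $\tup{p,\dot q'}$ are identified when $p\forces\dot q=\dot q'$---is the intended one, and the paper's silence on this point reflects that it regards the passage to the preorder quotient as automatic.
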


Now, if $\tup{\pi,\dot\sigma}$ is a pair such that $\pi\in\aut(\PP)$ respects $\dot\QQ$ and $1_\PP\forces_\PP\dot\sigma\in\aut(\dot\QQ)$, we define the action of $\tup{\pi,\dot\sigma}$ on $\PP\ast\dot\QQ$ as the successive operation of $\dot\sigma$ and then $\pi$. We introduce a notation\footnote{We will soon start applying these automorphisms to the names of other automorphisms. It is useful to have a distinction between the names defining the automorphism and the action they induce. The integral symbol was chosen because (1) it is not used in set theory, and (2) it is reasonably looking with subscripts.} to simplify this,\[\gaut{\tup{\pi,\dot\sigma}}\tup{p,\dot q}=\tup{\pi p,\pi(\dot\sigma(\dot q))}=\tup{\pi p,\pi\dot\sigma(\pi\dot q)}.\]
We will also denote, for simplicity, $\gaut{\pi}$ and $\gaut{\dot\sigma}$ the extensions of automorphisms from $\aut(\PP)$ and $\aut(\dot\QQ)$ respectively. So $\gaut{\tup{\pi,\dot\sigma}}=\gaut{\pi}\gaut{\dot\sigma}$.

\begin{proposition}The following properties are true for $\tup{\pi_0,\dot\sigma_0}$ and $\tup{\pi_1,\dot\sigma_1}$:
\begin{enumerate}
\item $\gaut{\tup{\pi_1,\dot\sigma_1}}\gaut{\tup{\pi_0,\dot\sigma_0}}=\gaut{\tup{\pi_1\pi_0,\pi_0^{-1}(\dot\sigma_1)\dot\sigma_0}}$.
\item $\gaut{\tup{\pi_1,\dot\sigma_1}}^{-1}=\gaut{\tup{\pi_1^{-1},\pi_1(\dot\sigma_1^{-1})}}$.
\end{enumerate}
\end{proposition}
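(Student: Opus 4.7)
The plan is to factor $\gaut{\tup{\pi_i,\dot\sigma_i}}=\gaut{\pi_i}\gaut{\dot\sigma_i}$ using the decomposition introduced just before the statement, and then reduce both (1) and (2) to a single commutation relation between a ``pure $\PP$-automorphism'' factor and a ``pure $\dot\QQ$-automorphism'' factor. Concretely, the key identity I aim for is
\[\gaut{\dot\sigma_1}\gaut{\pi_0}=\gaut{\pi_0}\gaut{\pi_0^{-1}(\dot\sigma_1)},\]
where $\pi_0^{-1}(\dot\sigma_1)$ denotes the name-level action of $\pi_0^{-1}$ on the $\PP$-name $\dot\sigma_1$. For this even to make sense, I first need $1_\PP\forces\pi_0^{-1}(\dot\sigma_1)\in\aut(\dot\QQ)$; this follows from the Symmetry Lemma applied to the statement ``$\dot\sigma_1\in\aut(\dot\QQ)$'' together with the hypothesis that $\pi_0$ (hence $\pi_0^{-1}$) respects $\dot\QQ$.

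To verify the commutation relation I would evaluate both sides on an arbitrary condition $\tup{p,\dot q}$. The left-hand side produces $\tup{\pi_0 p,\dot\sigma_1(\pi_0\dot q)}$, while the right-hand side produces $\tup{\pi_0 p,\pi_0(\pi_0^{-1}(\dot\sigma_1)(\dot q))}$. The equality of second coordinates is precisely an instance of the Symmetry Lemma: pushing $\pi_0^{-1}$ through the expression $\dot\sigma_1(\pi_0\dot q)$ turns it into $\pi_0^{-1}(\dot\sigma_1)(\pi_0^{-1}(\pi_0\dot q))=\pi_0^{-1}(\dot\sigma_1)(\dot q)$, as forced by $1_\PP$, and applying $\pi_0$ to both sides yields the desired name. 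I would then use the blanket convention, stated at the end of the iterated forcing subsection, that conditions forced by $1_\PP$ to coincide can be identified.

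Given the commutation lemma, (1) is an immediate computation:
\[\gaut{\pi_1}\gaut{\dot\sigma_1}\gaut{\pi_0}\gaut{\dot\sigma_0}=\gaut{\pi_1}\gaut{\pi_0}\gaut{\pi_0^{-1}(\dot\sigma_1)}\gaut{\dot\sigma_0}=\gaut{\pi_1\pi_0}\gaut{\pi_0^{-1}(\dot\sigma_1)\dot\sigma_0},\]
using separately that $\gaut{-}$ is a homomorphism on $\aut(\PP)$ and on $\PP$-names for elements of $\aut(\dot\QQ)$. For (2), I would invert the factorization $\gaut{\tup{\pi,\dot\sigma}}=\gaut{\pi}\gaut{\dot\sigma}$ to obtain $\gaut{\dot\sigma^{-1}}\gaut{\pi^{-1}}$, and then apply the commutation relation with $\pi_0:=\pi^{-1}$ and $\dot\sigma_1:=\dot\sigma^{-1}$ to rewrite it as $\gaut{\pi^{-1}}\gaut{\pi(\dot\sigma^{-1})}=\gaut{\tup{\pi^{-1},\pi(\dot\sigma^{-1})}}$. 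The main obstacle is purely bookkeeping: at each step one must invoke the Symmetry Lemma to confirm that the syntactic manipulation of names really produces a $\PP$-name forced to live in $\aut(\dot\QQ)$ and that the application of such names commutes with the $\PP$-automorphisms in the expected way; once this is granted, both identities fall out of simply multiplying out the factorizations.
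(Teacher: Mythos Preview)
Your proposal is correct and essentially matches the paper's argument: the paper proves (1) by a direct computation on an arbitrary condition $\tup{p,\dot q}$, and the passage from the second to the third line of that computation is precisely your commutation identity $\gaut{\dot\sigma_1}\gaut{\pi_0}=\gaut{\pi_0}\gaut{\pi_0^{-1}(\dot\sigma_1)}$ (equivalently $\pi(\dot\sigma(\dot q))=(\pi\dot\sigma)(\pi\dot q)$, which the paper already recorded when introducing the notation $\gaut{\tup{\pi,\dot\sigma}}$). The only minor difference is in (2): the paper plugs the proposed inverse into (1) and checks the product is the identity, whereas you invert the factorization $\gaut{\pi}\gaut{\dot\sigma}$ and then commute; both routes are one line once (1) or the commutation relation is in hand.
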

\begin{proof}
First we will prove $(1)$:
\begin{align*}
\gaut{\tup{\pi_1,\dot\sigma_1}}\gaut{\tup{\pi_0,\dot\sigma_0}}\tup{p,\dot q} 
&=\gaut{\tup{\pi_1,\dot\sigma_1}}\tup{\pi_0 p,\pi_0(\dot\sigma_0(\dot q))}\\
&=\tup{\pi_1\pi_0 p,\pi_1(\dot\sigma_1(\pi_0(\dot\sigma_0(\dot q))))}\\
&=\tup{\pi_1\pi_0 p,\pi_1\pi_0((\pi_0^{-1}(\dot\sigma_1)\dot\sigma_0)(\dot q))}\\
&=\gaut{\tup{\pi_1\pi_0,\pi_0^{-1}(\dot\sigma_1)\dot\sigma_0}}\tup{p,\dot q}
\end{align*}
Now $(2)$ follows easily, as we only need to argue that $\gaut{\tup{\pi_1,\dot\sigma_1}}\gaut{\tup{\pi_1^{-1},\pi_1(\dot\sigma_1^{-1})}}=\id$, and indeed,
\[\gaut{\tup{\pi_1,\dot\sigma_1}}\gaut{\tup{\pi_1^{-1},\pi_1(\dot\sigma_1^{-1})}}=\gaut{\tup{\pi_1\pi_1^{-1},\pi_1(\dot\sigma_1)\pi_1(\dot\sigma_1^{-1})}}=\gaut{\tup{\id,\pi_1(\id^\bullet)}}=\id.\qedhere\]
\end{proof}

The readers who are knowledgeable in group theory might see something familiar here: this operation looks suspiciously close to the definition of a semi-direct product. And those readers will not be wrong. The major difference here is that due to the fact $\dot\sigma$ is a name for an automorphism, we are interested in $\gaut{\dot\sigma}$ as an automorphism of the iteration, but also in how $\dot\sigma$ will turn out in the generic extension. And it might be that $1_\PP\nforces\dot\sigma=\dot\tau$, but there is some $p$ such that $p\forces\dot\sigma=\dot\tau$, so in different generic extensions $\dot\sigma$ will become different automorphisms.

Before proceeding, one might wonder why did we choose $\gaut{\pi}\gaut{\dot\sigma}$ as the order of operation, rather than $\gaut{\dot\sigma}\gaut{\pi}$.
\begin{proposition}\label{prop:almost-commutative}
The following holds for $\tup{\pi,\dot\sigma}$: $\gaut{\dot\sigma}\gaut{\pi}=\gaut{\pi,\pi^{-1}\dot\sigma}$.
\end{proposition}
\begin{proof}
Note that $\gaut{\dot\sigma}$ is really $\gaut{\tup{\id,\dot\sigma}}$ and likewise $\gaut{\pi}=\gaut{\tup{\pi,\id^\bullet}}$ so by the previous proposition, $\gaut{\tup{\id,\dot\sigma}}\gaut{\tup{\pi,\id^\bullet}}=\gaut{\tup{\id\, \pi,\pi^{-1}\dot\sigma\, \id^\bullet}}=\gaut{\tup{\pi,\pi^{-1}\dot\sigma}}$.
\end{proof}
This means that the order itself is ``almost irrelevant'', at least for finite support iterations, and by paying a small ``price of conjugation'' we can switch the order to our liking. 

\begin{definition}[Generic semi-direct product]
Let $\dot\sG_1$ be a name for a group of automorphisms of $\dot\QQ$, and let $\sG_0$ be a group of automorphisms of $\PP$ which respect both $\dot\QQ$ and $\dot\sG_1$. We define the \textit{generic semi-direct product} $\sG_0\ast\dot\sG_1$ to be the following group of automorphisms of $\PP\ast\dot\QQ$:
\[\sG_0\ast\dot\sG_1=\left\{\gaut{\tup{\pi,\dot\sigma}}\middd\begin{array}{ll}
\pi\in\sG_0, \\1_\PP\forces\dot\sigma\in\dot\sG_1,\text { and }\\ \dot\sigma\text{ was obtained by mixing over names appearing in }\dot\sG_1
\end{array}\right\}.\]
\end{definition}
To see that $\sG_0\ast\dot\sG_1$ is indeed a group one can observe that because we required that $\sG_0$ respects $\dot\sG_1$, the composition and inverses can still be obtained by names that were mixed from $\dot\sG_1$.
\begin{proposition}The group $\sG_0\ast\dot\sG_1$ is generated by $\{\gaut{\pi},\gaut{\dot\sigma}\mid\pi\in\sG_0,1_\PP\forces_\PP\dot\sigma\in\dot\sG_1\}$ as a subgroup of $\aut(\PP\ast\dot\QQ)$.
\end{proposition}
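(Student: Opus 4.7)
The plan is to reduce the statement to the decomposition identity $\gaut{\tup{\pi,\dot\sigma}} = \gaut{\pi}\gaut{\dot\sigma}$ that was already recorded just before the definition of the generic semi-direct product. If both factors are in $\sG_0\ast\dot\sG_1$ and themselves belong to the proposed generating set, this identity immediately exhibits every element of $\sG_0\ast\dot\sG_1$ as a product of two generators, which is all that is required. So the only real content is to verify that the two families of generators actually lie in $\sG_0\ast\dot\sG_1$ in the sense of the preceding definition.

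For $\pi\in\sG_0$, I would write $\gaut{\pi}=\gaut{\tup{\pi,\id^\bullet}}$. Since $\id$ is a member of the group $\sG_1$, the name $\id^\bullet$ appears in $\dot\sG_1$, and it is trivially obtained by mixing over names appearing in $\dot\sG_1$ (using the antichain $\{1_\PP\}$). Hence $\gaut{\pi}\in\sG_0\ast\dot\sG_1$, and it is one of the generators.

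For a name $\dot\sigma$ with $1_\PP\forces\dot\sigma\in\dot\sG_1$, we have $\gaut{\dot\sigma}=\gaut{\tup{\id,\dot\sigma}}$, but $\dot\sigma$ may not itself be obtained by mixing over names in $\dot\sG_1$. This is the only subtle point, and it is handled by a standard density/mixing argument: below every condition there is some $r$ forcing $\dot\sigma$ to equal a name $\dot\tau_r$ that appears in $\dot\sG_1$, so one can pick a maximal antichain $A$ and a family $\{\dot\tau_r\}_{r\in A}$ of such names, and mix to obtain $\dot\sigma'$ with $1_\PP\forces\dot\sigma=\dot\sigma'$. Then for every $\tup{p,\dot q}$ we have $1_\PP\forces\dot\sigma(\dot q)=\dot\sigma'(\dot q)$, so the conditions $\tup{p,\dot\sigma(\dot q)}$ and $\tup{p,\dot\sigma'(\dot q)}$ are equivalent by the convention adopted in the iterated forcing subsection, and $\gaut{\dot\sigma}=\gaut{\dot\sigma'}$ as automorphisms of the separative quotient of $\PP\ast\dot\QQ$. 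Thus $\gaut{\dot\sigma}\in\sG_0\ast\dot\sG_1$ as well.

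Combining the two steps, every element $\gaut{\tup{\pi,\dot\sigma}}$ of $\sG_0\ast\dot\sG_1$ decomposes as a product $\gaut{\pi}\gaut{\dot\sigma}$ of two elements of the proposed generating set, which proves the proposition. The main, and really the only, obstacle is the bookkeeping around the mixing clause in the definition of $\sG_0\ast\dot\sG_1$; nothing deep happens, but without the convention of identifying conditions forced to be equal, the equality $\gaut{\dot\sigma}=\gaut{\dot\sigma'}$ would have to be promoted to a statement about the separative quotient before the argument goes through.
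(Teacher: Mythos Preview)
Your proof is correct and follows essentially the same approach as the paper: both directions rest on the decomposition $\gaut{\tup{\pi,\dot\sigma}}=\gaut{\pi}\gaut{\dot\sigma}$ together with the identifications $\gaut{\pi}=\gaut{\tup{\pi,\id^\bullet}}$ and $\gaut{\dot\sigma}=\gaut{\tup{\id,\dot\sigma}}$. You are in fact more careful than the paper on one point: the paper only verifies $\gaut{\tup{\id,\dot\sigma}}\in\sG_0\ast\dot\sG_1$ for $\dot\sigma$ already obtained by mixing, whereas the generating set in the statement allows arbitrary $\dot\sigma$ with $1_\PP\forces\dot\sigma\in\dot\sG_1$; your density/mixing argument producing $\dot\sigma'$ with $1_\PP\forces\dot\sigma=\dot\sigma'$ and hence $\gaut{\dot\sigma}=\gaut{\dot\sigma'}$ (under the paper's standing convention of identifying conditions forced equal) is exactly what is needed to close that small gap.
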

\begin{proof}
One inclusion is trivial, as by definition $\gaut{\tup{\pi,\dot\sigma}}=\gaut{\pi}\gaut{\dot\sigma}$. In the other direction note that for every $\pi\in\sG_0$ the automorphism $\gaut{\tup{\pi,\id^\bullet}}$ satisfies that $\pi\in\sG_0$ and $1_\PP\forces_\PP\id^\bullet\in\dot\sG_1$, and similarly for $\gaut{\tup{\id,\dot\sigma}}$ for every $\dot\sigma$ which was obtained by mixing over names appearing in $\dot\sG_1$.
\end{proof}

\subsection{The general case}
\begin{definition}Let $\tup{\dot\QQ_\alpha\mid\alpha<\delta},\tup{\PP_\alpha\mid\alpha\leq\delta}$ be a finite support iteration, and for each $\alpha$, $\dot\sG_\alpha$ is a name for an automorphism group of $\dot\QQ_\alpha$ such that the following holds:
\begin{enumerate}
\item $\cG_0=\{\id\}$.
\item $\cG_{\alpha+1}=\cG_\alpha\ast\dot\sG_\alpha$.
\item If $\alpha$ is a limit ordinal, then $\cG_\alpha$ is the direct limit of the groups $\cG_\beta$.\footnote{Note that for $\beta\leq\beta'$ there is a natural embedding of $\cG_\beta$ into $\cG_{\beta'}$, thus the notion of a direct limit makes natural sense.}
\end{enumerate}
Moreover, for each $\alpha$, $\dot\QQ_\alpha$ and $\dot\sG_\alpha$ are respected by all the automorphisms in $\cG_\alpha$. Then $\cG_\delta$ is the \textit{generic semi-direct product} of the $\dot\sG_\alpha$ and it is an automorphism group of $\PP_\delta$.
\end{definition}
Much like the presentation of the two-step generic semi-direct product, we can represent the automorphisms in $\cG_\delta$ in a canonical way, as shown in the next proposition.
\begin{proposition}
Every automorphism in $\cG_\delta$ can be represented by a finite sequence of $\dot\pi_{\alpha_i}$, for $\alpha_0<\ldots<\alpha_n<\delta$ such that $1_{\alpha_i}\forces_{\alpha_i}\dot\pi_{\alpha_i}\in\dot\sG_{\alpha_i}$, and the automorphism is obtained by $\gaut{\dot\pi_{\alpha_0}}\cdots\gaut{\dot\pi_{\alpha_n}}$.
\end{proposition}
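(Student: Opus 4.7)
The plan is to proceed by induction on $\delta$, using the two-step result from the previous subsection at successor stages and the direct-limit property at limit stages.

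The base case $\delta=0$ is vacuous, since $\cG_0=\{\id\}$ and the identity is represented by the empty product. For a successor $\delta=\beta+1$, by definition $\cG_{\beta+1}=\cG_\beta\ast\dot\sG_\beta$, so any automorphism in $\cG_{\beta+1}$ has the form $\gaut{\tup{\pi,\dot\sigma}}=\gaut{\pi}\gaut{\dot\sigma}$ for some $\pi\in\cG_\beta$ and some $\PP_\beta$-name $\dot\sigma$ obtained by mixing names appearing in $\dot\sG_\beta$ with $1_{\PP_\beta}\forces\dot\sigma\in\dot\sG_\beta$. By the induction hypothesis, $\pi=\gaut{\dot\pi_{\alpha_0}}\cdots\gaut{\dot\pi_{\alpha_n}}$ with $\alpha_0<\cdots<\alpha_n<\beta$ and $1_{\alpha_i}\forces\dot\pi_{\alpha_i}\in\dot\sG_{\alpha_i}$. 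Setting $\alpha_{n+1}=\beta$ and $\dot\pi_{\alpha_{n+1}}=\dot\sigma$ gives the desired representation in $\cG_{\beta+1}$.

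For a limit ordinal $\delta$, $\cG_\delta$ is the direct limit of the $\cG_\alpha$ for $\alpha<\delta$. Hence every element of $\cG_\delta$ already lies in the image of some $\cG_\alpha$ for $\alpha<\delta$, and the induction hypothesis at stage $\alpha$ supplies the desired increasing sequence of $\dot\pi_{\alpha_i}$ with $\alpha_i<\alpha<\delta$.

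The main obstacle is to justify that an automorphism represented in $\cG_\alpha$ really is the same one we are considering in $\cG_\delta$. This requires viewing each $\gaut{\dot\pi_{\alpha_i}}$ as acting trivially on the coordinates after $\alpha_i$, and checking that the composition $\gaut{\dot\pi_{\alpha_0}}\cdots\gaut{\dot\pi_{\alpha_n}}$ is well-defined as an automorphism of the full $\PP_\delta$. This works precisely because of the standing hypothesis that every $\dot\QQ_\gamma$ and $\dot\sG_\gamma$ is respected by all automorphisms in $\cG_\gamma$: each factor $\gaut{\dot\pi_{\alpha_i}}$ fixes the tail structure, so composing factors from different stages introduces no conflict. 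Uniqueness of the representation is not asserted (and indeed fails in general, since a single stage $\dot\pi_\alpha$ may be computed as the product of several mixing-equivalent names), but existence is all that is needed.
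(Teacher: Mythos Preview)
Your proof is correct and follows essentially the same approach as the paper: induction on $\delta$, with the base case trivial, the limit case handled by the direct-limit definition, and the successor case via the two-step decomposition $\gaut{\tup{\pi,\dot\sigma}}=\gaut{\pi}\gaut{\dot\sigma}$ together with the induction hypothesis applied to $\pi\in\cG_\beta$. Your additional remarks on why the representation in $\cG_\alpha$ persists to $\cG_\delta$ and on non-uniqueness are correct side observations that the paper leaves implicit.
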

\begin{proof}
We prove this by induction on $\delta$. If $\delta\leq 1$ this is just by definition, and if $\delta$ is a limit ordinal it follows immediately by the fact that $\cG_\delta$ is a direct limit of $\cG_\alpha$ for $\alpha<\delta$. Suppose that $\delta=\alpha+1$, then every automorphism in $\cG_\delta$ is obtained as a pair $\tup{\pi,\dot\pi_\alpha}$ where $\pi$ is an automorphism in $\cG_\alpha$, which is what we wanted to show.
\end{proof}
We can now use this proposition to represent our automorphisms in a rather canonical way.
\begin{definition}
We say that a sequence $\vec\pi=\tup{\dot\pi_\alpha\mid\alpha<\delta}$ is an \textit{automorphism sequence} if
\begin{enumerate}
\item For every $\alpha<\delta$, $1_\alpha\forces_\alpha\dot\pi_\alpha\in\dot\sG_\alpha$.
\item $C(\vec\pi)\subseteq\delta$ is a finite set such that for every $\alpha\notin C(\vec\pi)$, $1_\alpha\forces_\alpha\dot\pi_\alpha=\id^\bullet$.
\end{enumerate}
We write $\gaut{\vec\pi}$ to denote the automorphism in $\cG_\delta$ which is induced by $\vec\pi$. We will also write $\vec\pi^{-1}$ for the sequence such that $\gaut{\vec\pi^{-1}}=\gaut{\vec\pi}^{-1}$, and $\vec\pi\circ\vec\sigma$ for the sequence for which $\gaut{\vec\pi\circ\vec\sigma}=\gaut{\vec\pi}\gaut{\vec\sigma}$ (note that these are not the pointwise inverse or composition, and we will calculate them explicitly in the next subsection).
\end{definition}
To get a better intuition as to how $\gaut{\vec\pi}$ acts on $\PP_\delta$, let us first consider what happens when $C(\vec\pi)=\{\alpha\}$. The action is defined to be as follows
\[\gaut{\dot\pi_\alpha}p = p\restriction\alpha^\smallfrown\dot\pi_\alpha(p(\alpha))^\smallfrown(\dot\pi_\alpha(p\restriction(\alpha,\delta))).\]
So when we apply $\gaut{\dot\pi_\alpha}$ we factor $\PP_\delta$ into three parts, $\PP_\alpha\ast(\dot\QQ_\alpha\ast\PP_\delta{/}\PP_{\alpha+1})$, then $\dot\pi_\alpha$ acts on $\dot\QQ_\alpha$ as an automorphism of the forcing, and on $\PP_\delta{/}\PP_{\alpha+1}$ as $\dot\QQ_\alpha$-names, and of course it does nothing to $\PP_\alpha$ itself. Now if we have $\gaut{\vec\pi}$, then we simply apply it one step at a time from the maximal element of $C(\vec\pi)$ to the minimal.

\begin{remark}
We use (again) the fact that $\PP_\delta$ is a finite support iteration to ensure that such decomposition exists. If one wishes to extend this definition to larger supports, then one needs to overcome this obstacle. For example a Revised Countable Support iteration might work if one wishes to extend the definition to countable support, however in such context the automorphisms should be applied ``from the bottom up'' rather than the ``top to bottom'' approach that we take here. It is not entirely clear, however, that such construction will work or generalize further (e.g.\ Easton support iterations).
\end{remark}
\subsection{Properties of the generic semi-direct product}
Let $\tup{\dot\QQ_\alpha,\dot\sG_\alpha\mid\alpha<\delta}$ be a finite support iteration as before. We will investigate some properties of $\cG_\delta$.
\begin{proposition}\label{prop:generic equality of aut}
Suppose that $\vec\pi$ and $\vec\sigma$ are two sequences of automorphisms, and $p\in\PP_\delta$ such that $p\forces_\delta\dot\pi_\alpha=\dot\sigma_\alpha$ for every $\alpha<\delta$. Then $\gaut{\vec\pi}p=\gaut{\vec\sigma}p$. In particular both automorphisms behave the same on the cone below $p$.
\end{proposition}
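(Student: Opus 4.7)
The plan is to argue by induction on $n = |C(\vec\pi) \cup C(\vec\sigma)|$. When $n = 0$ both sequences are pointwise $\id^\bullet$, so $\gaut{\vec\pi}$ and $\gaut{\vec\sigma}$ are both the identity on $\PP_\delta$ and the statement is trivial.

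For the inductive step, let $\alpha = \max(C(\vec\pi)\cup C(\vec\sigma))$. The canonical representation from the preceding proposition gives a top-down factorization $\gaut{\vec\pi} = \gaut{\vec\pi^{<\alpha}}\gaut{\dot\pi_\alpha}$ and $\gaut{\vec\sigma} = \gaut{\vec\sigma^{<\alpha}}\gaut{\dot\sigma_\alpha}$, where $\vec\pi^{<\alpha}$ agrees with $\vec\pi$ on coordinates below $\alpha$ and is $\id^\bullet$ elsewhere (and similarly for $\vec\sigma^{<\alpha}$). Applying the top step to $p$ yields
\begin{equation*}
\gaut{\dot\pi_\alpha}p = p\restriction\alpha{}^\smallfrown\dot\pi_\alpha(p(\alpha)){}^\smallfrown\dot\pi_\alpha(p\restriction(\alpha,\delta)),
\end{equation*}
with the analogous expression for $\dot\sigma_\alpha$. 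Since $\dot\pi_\alpha$ and $\dot\sigma_\alpha$ are $\PP_\alpha$-names and $p\forces_\delta\dot\pi_\alpha = \dot\sigma_\alpha$, we also have $p\restriction\alpha\forces_\alpha \dot\pi_\alpha(p(\alpha)) = \dot\sigma_\alpha(p(\alpha))$, and likewise on the tail $p\restriction(\alpha,\delta)$. Invoking the paper's convention of identifying conditions whose components are forced equal, this gives $\gaut{\dot\pi_\alpha}p = \gaut{\dot\sigma_\alpha}p$; call the common condition $q$.

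Now $q\restriction\alpha = p\restriction\alpha$, so $q\forces_\delta\dot\pi_\beta = \dot\sigma_\beta$ for every $\beta<\alpha$, and the support union of the trimmed sequences has strictly smaller size. The inductive hypothesis yields $\gaut{\vec\pi^{<\alpha}}q = \gaut{\vec\sigma^{<\alpha}}q$, whence $\gaut{\vec\pi}p = \gaut{\vec\sigma}p$. The ``in particular'' clause is immediate: any $r\leq p$ also forces $\dot\pi_\alpha = \dot\sigma_\alpha$ for each $\alpha$, so the main statement applies to $r$ as well. The main obstacle is the middle step: the mixed names $\dot\pi_\alpha(p(\alpha))$ and $\dot\sigma_\alpha(p(\alpha))$ are built by mixing over maximal antichains in $\PP_\alpha$, and one has to verify that below $p\restriction\alpha$ the mixed outputs are forced equal using only the generic equality of $\dot\pi_\alpha$ and $\dot\sigma_\alpha$ together with functionality. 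Once this is granted the separative-quotient identification is legitimate and the rest is routine bookkeeping.
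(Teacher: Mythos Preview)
Your proof is correct and follows the same inductive scheme as the paper, inducting on $|C(\vec\pi)\cup C(\vec\sigma)|$. The one substantive difference is that you peel off the \emph{maximal} coordinate while the paper peels off the \emph{minimal} one. Your choice is arguably cleaner: after applying $\gaut{\dot\pi_\alpha}$ with $\alpha=\max$, the tail $p\restriction\alpha$ is untouched and the remaining sequences $\vec\pi^{<\alpha},\vec\sigma^{<\alpha}$ are literally the original restrictions, so checking the inductive hypothesis amounts to observing $q\restriction\beta=p\restriction\beta$ for $\beta<\alpha$. In the paper's bottom-up approach the remaining sequences become $\gaut{\dot\pi_\alpha}(\vec\pi\restriction(\alpha,\delta))$ and $\gaut{\dot\sigma_\alpha}(\vec\sigma\restriction(\alpha,\delta))$, and one must argue separately that $p_*$ forces these modified tails equal. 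Both work; yours saves a line of bookkeeping.

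The point you flag as the ``main obstacle''---that the mixed names $\dot\pi_\alpha(p(\alpha))$ and $\dot\sigma_\alpha(p(\alpha))$ are forced equal below $p\restriction\alpha$---is handled exactly as in the paper: since $\dot\pi_\alpha,\dot\sigma_\alpha$ are $\PP_\alpha$-names for functions and $p\restriction\alpha\forces_\alpha\dot\pi_\alpha=\dot\sigma_\alpha$, functionality gives equality of the outputs, and the paper's standing convention identifying $\tup{p,\dot q}$ with $\tup{p,\dot q'}$ when $p\forces\dot q=\dot q'$ does the rest.
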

To shorten the statements, we will write $p\forces_\delta\vec\pi=\vec\sigma$ as a shorthand for ``$p\forces_\delta\dot\pi_\alpha=\dot\sigma_\alpha$ for all $\alpha<\delta$''.
\begin{proof}
We prove this by induction on $n=|C(\vec\pi)\cup C(\vec\sigma)|$. Of course this is trivial for $n=0$ as that means both $\vec\pi$ and $\vec\sigma$ are the identity.

Let $\alpha=\min C(\vec\pi)\cup C(\vec\sigma)$, then $p\restriction\alpha\forces_\alpha\dot\pi_\alpha=\dot\sigma_\alpha$. Consider $\vec\pi\restriction(\alpha,\delta)$ and $\vec\sigma\restriction(\alpha,\delta)$ as $\PP_\alpha$-names for $\dot\QQ_\alpha$-names, then we have that 
\[p\restriction\alpha\forces_\alpha``p(\alpha)\forces_{\QQ_\alpha}``p\restriction(\alpha,\delta)\forces_{\delta/\alpha+1}\vec\pi\restriction(\alpha,\delta)=\vec\sigma\restriction(\alpha,\delta)"".\]
Therefore we can apply $\dot\pi_\alpha$ and $\dot\sigma_\alpha$ on the internal forcing statement and receive the same results, since forcing below $p\restriction\alpha$ both are the same. This in turn translates to the statement that
\[\gaut{\dot\sigma_\alpha}p\restriction\alpha+1=\gaut{\dot\pi_\alpha}p\restriction\alpha+1\forces_{\alpha+1}\gaut{\dot\pi_\alpha}\vec\pi\restriction(\alpha,\delta)=\gaut{\dot\sigma_\alpha}\vec\sigma\restriction(\alpha,\delta).\]
Let $p_*=\gaut{\dot\pi_\alpha}p\restriction\alpha+1$, let $\vec\pi_*$ be $\gaut{\dot\pi_\alpha}\vec\pi\restriction(\alpha,\delta)$ and similarly let $\vec\sigma_*$ be $\gaut{\dot\sigma_\alpha}\vec\sigma\restriction(\alpha,\delta)$.  We have that $|C(\vec\pi_*)\cup C(\vec\sigma_*)|=n-1<n$, and that $p_*\forces_\delta\vec\pi_*=\vec\sigma_*$, so by the induction hypothesis we get that $\gaut{\vec\pi_*}p_*=\gaut{\vec\sigma_*}p_*$, but since $\gaut{\vec\pi}p=\gaut{\vec\pi_*}p_*$ and $\gaut{\vec\sigma}p=\gaut{\vec\sigma_*}p_*$ we get the wanted equality.
\end{proof}
The above proposition is the reason the automorphism group is called generic. As we progress in our forcing towards a generic filter new equality between sequences can be added making them essentially the same from that point onward.
\begin{proposition}\label{prop:preserving names}
If $p\forces_\delta\vec\pi=\vec\sigma$ and $\gaut{\vec\pi}p=p$, then for every $\PP_\delta$-name, $\dot x$, $p\forces_\delta\gaut{\vec\pi}\dot x=\gaut{\vec\sigma}\dot x$.
\end{proposition}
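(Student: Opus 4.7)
The plan is to prove this by induction on the rank of $\dot x$, arguing semantically at each stage. The rank-$0$ case is vacuous. For the inductive step I will show that for every $V$-generic filter $G\subseteq\PP_\delta$ containing $p$, the interpretations $(\gaut{\vec\pi}\dot x)^G$ and $(\gaut{\vec\sigma}\dot x)^G$ coincide; since this holds in every such extension, the desired forcing statement $p\forces_\delta\gaut{\vec\pi}\dot x=\gaut{\vec\sigma}\dot x$ then follows from the definability of the forcing relation.

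The key auxiliary observation concerns how $\gaut{\vec\pi}$ and $\gaut{\vec\sigma}$ behave on the cone below $p$. Since $\gaut{\vec\pi}p=p$, applying $\gaut{\vec\pi}^{-1}$ yields $\gaut{\vec\pi}^{-1}p=p$, so any $r'\leq p$ satisfies $\gaut{\vec\pi}^{-1}r'\leq p$ as well. In particular $\gaut{\vec\pi}^{-1}r'\forces_\delta\vec\pi=\vec\sigma$, and applying \autoref{prop:generic equality of aut} at this condition gives $\gaut{\vec\pi}(\gaut{\vec\pi}^{-1}r')=\gaut{\vec\sigma}(\gaut{\vec\pi}^{-1}r')$, i.e.\ $r'=\gaut{\vec\sigma}\gaut{\vec\pi}^{-1}r'$. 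Consequently, if $r'\leq\gaut{\vec\pi}q$ for some $q\in\PP_\delta$, then $\gaut{\vec\pi}^{-1}r'\leq q$, and therefore $r'=\gaut{\vec\sigma}\gaut{\vec\pi}^{-1}r'\leq\gaut{\vec\sigma}q$. The symmetric statement (swapping the roles of $\vec\pi$ and $\vec\sigma$) holds by the same argument, so $\gaut{\vec\pi}q$ and $\gaut{\vec\sigma}q$ have exactly the same refinements inside the cone below $p$.

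With this in hand, take any $\tup{q,\dot y}\in\dot x$ with $\gaut{\vec\pi}q\in G$. Since $p\in G$ there exists $r'\in G$ with $r'\leq p$ and $r'\leq\gaut{\vec\pi}q$; by the preceding paragraph $r'\leq\gaut{\vec\sigma}q$, so $\gaut{\vec\sigma}q\in G$ as well. The name $\dot y$ has strictly smaller rank than $\dot x$, so by the induction hypothesis $p\forces_\delta\gaut{\vec\pi}\dot y=\gaut{\vec\sigma}\dot y$, and hence $(\gaut{\vec\pi}\dot y)^G=(\gaut{\vec\sigma}\dot y)^G$. This shows $(\gaut{\vec\pi}\dot x)^G\subseteq(\gaut{\vec\sigma}\dot x)^G$, and the reverse inclusion is obtained identically by interchanging the roles of $\vec\pi$ and $\vec\sigma$. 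The main difficulty here is purely bookkeeping: keeping track of how the two automorphisms move conditions in and out of the cone below $p$. This is resolved once we have the bijective correspondence $r'\mapsto\gaut{\vec\pi}^{-1}r'$ from the cone below $p$ to itself, which is exactly what the fixed-point assumption $\gaut{\vec\pi}p=p$ provides.
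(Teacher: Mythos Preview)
Your proof is correct and follows essentially the same line as the paper's: induction on the rank of $\dot x$, with the key observation that the cone below $p$ is preserved by $\gaut{\vec\pi}$, $\gaut{\vec\sigma}$, and their inverses, and that these automorphisms agree there (via \autoref{prop:generic equality of aut}). The only difference is presentational: you argue semantically through interpretations in generic extensions, whereas the paper stays with the forcing relation, taking $q\leq_\delta p$ with $q\forces_\delta\dot z\in\gaut{\vec\pi}\dot x$ and showing $q\forces_\delta\dot z\in\gaut{\vec\sigma}\dot x$ directly; both routes unwind to the same computation.
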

\begin{proof}
By the previous proposition it follows that $\gaut{\vec\sigma}p=p$, and that for every $q\leq_\delta p$ it is also true that $\gaut{\vec\pi}q=\gaut{\vec\sigma}q\leq_\delta\gaut{\vec\sigma}p=p$. In other words, the cone below $p$ is closed under $\gaut{\vec\pi}$ and $\gaut{\vec\sigma}$, and the two are equal there. It follows that their inverses are also equal on the cone below $p$ (whether or not if the sequences $\vec\pi^{-1}$ and $\vec\sigma^{-1}$ are forced to be equal below $p$). 

We prove the claim by induction on the rank of $\dot x$. Let $q\leq_\delta p$ such that $q\forces_\delta\dot z\in\gaut{\vec\pi}\dot x$. We may assume without loss of generality that for some $\tup{s,\dot u}\in\gaut{\vec\pi}\dot x$ we have that $q\leq_\delta s$ and $q\forces_\delta\dot z=\dot u$. Thus, $\gaut{\vec\pi^{-1}}q\forces_\delta\gaut{\vec\pi^{-1}}\dot u\in\dot x$. By the induction hypothesis, and as the rank of $\dot u$ is lower, $p\forces_\delta\gaut{\vec\sigma}\gaut{\vec\pi^{-1}}\dot u=\gaut{\vec\pi}\gaut{\vec\pi^{-1}}\dot u=\dot u$. From the assumption that $p\forces_\delta\vec\pi=\vec\sigma$, we also get that $\gaut{\vec\sigma}\gaut{\vec\pi^{-1}}q=\gaut{\vec\pi}\gaut{\vec\pi^{-1}}q=q$. 

Combining both these things we get that $q\forces_\delta\dot u\in\gaut{\vec\sigma}\dot x$, and so we obtain the inclusion $p\forces_\delta\gaut{\vec\pi}\dot x\subseteq\gaut{\vec\sigma}\dot x$. The other inclusion is proved similarly, and so $p\forces_\delta\gaut{\vec\pi}\dot x=\gaut{\vec\sigma}\dot x$.
\end{proof}
Examining the above proof in detail will expose the necessity of $\gaut{\vec\pi}p=p$, and why this is not a trivial statement. If we only know that $p\forces_\delta\vec\pi=\vec\sigma$, there is no guarantee that we can apply $\gaut{\vec\pi^{-1}}$ or $\gaut{\vec\sigma}\gaut{\vec\pi^{-1}}$ so judiciously and retain equality. Namely, there is no guarantee that $\gaut{\vec\pi}p\forces_\delta\vec\pi^{-1}=\vec\sigma^{-1}$.\footnote{Is the failure of this even possible? We do not have the answer. It is somewhat irrelevant too, after introducing the concept of tenacity. However, this might be somewhat natural to ponder, and to some extent provides motivation for the discussion about tenacity to begin with.}
\begin{proposition}\label{prop:aut seq calculations}
If $\vec\pi$ and $\vec\sigma$ are two sequences of automorphisms, then the following is true:
\begin{enumerate}
\item For $\alpha<\delta$, $\vec\pi^{-1}\restriction\alpha=(\vec\pi\restriction\alpha)^{-1}$.
\item $\vec\pi\circ\vec\sigma=\tup{(\gaut{\vec\sigma^{-1}}\dot\pi_\alpha)\dot\sigma_\alpha\mid\alpha<\delta}$.
\item $\vec\pi^{-1}=\tup{\gaut{\vec\pi}\dot\pi_\alpha^{-1}\mid\alpha<\delta}$.
\end{enumerate}
\end{proposition}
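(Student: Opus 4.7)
The plan is to prove all three claims by induction on $\delta$, reducing the successor step to the two-step semi-direct product formulas of the previous subsection. Limit stages are handled uniformly: because $C(\vec\pi)$ and $C(\vec\sigma)$ are finite, both automorphisms already live in some $\cG_\gamma$ with $\gamma<\delta$, and the direct limit structure permits invocation of the induction hypothesis. I will establish (1) first and then use it as a tool to identify truncated twists with full twists when proving (2) and (3). For (1), the action formula
\[\gaut{\dot\pi_\alpha}p = p\restriction\alpha^\smallfrown\dot\pi_\alpha(p(\alpha))^\smallfrown\dot\pi_\alpha(p\restriction(\alpha,\delta))\]
shows that any generator $\gaut{\dot\pi_\gamma}$ with $\gamma\geq\alpha$ fixes conditions whose support lies in $\alpha$; hence $\gaut{\vec\pi}$ setwise fixes the copy of $\PP_\alpha$ inside $\PP_\delta$ and restricts there to $\gaut{\vec\pi\restriction\alpha}$. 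Inverting this equality, and using that restriction commutes with inversion on a setwise-fixed subset, yields (1).

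For (2), at the successor stage $\delta=\beta+1$ (the case $\beta\notin C(\vec\pi)\cup C(\vec\sigma)$ being immediate) I decompose $\gaut{\vec\pi}=\gaut{\vec\pi\restriction\beta}\gaut{\dot\pi_\beta}$ and $\gaut{\vec\sigma}=\gaut{\vec\sigma\restriction\beta}\gaut{\dot\sigma_\beta}$. Viewing $\PP_\delta$ as $\PP_\beta\ast\dot\QQ_\beta$, the two-step composition formula rewrites the middle pair as
\[\gaut{\dot\pi_\beta}\gaut{\vec\sigma\restriction\beta}=\gaut{\vec\sigma\restriction\beta}\gaut{(\gaut{\vec\sigma\restriction\beta})^{-1}\dot\pi_\beta},\]
after which the two $\dot\QQ_\beta$-factors combine (again by the same formula) into a single $\gaut{\cdot}$ of the product $((\gaut{\vec\sigma\restriction\beta})^{-1}\dot\pi_\beta)\dot\sigma_\beta$. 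The induction hypothesis applied to $(\vec\pi\restriction\beta)\circ(\vec\sigma\restriction\beta)$ handles the coordinates below $\beta$, and (1) then lets me replace each truncated twist $(\gaut{\vec\sigma\restriction\gamma})^{-1}$ by $\gaut{\vec\sigma^{-1}}$, since $\dot\pi_\gamma$ is a $\PP_\gamma$-name and the two actions agree there.

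Claim (3) follows by the parallel induction: $\gaut{\vec\pi}^{-1}=\gaut{\dot\pi_\beta^{-1}}(\gaut{\vec\pi\restriction\beta})^{-1}$; swap the factors via the two-step formula to read off the $\beta$-coordinate $\gaut{\vec\pi\restriction\beta}\dot\pi_\beta^{-1}$, apply induction to $(\vec\pi\restriction\beta)^{-1}$ for the coordinates below $\beta$, and invoke (1) once more to rewrite $\gaut{\vec\pi\restriction\beta}$ as $\gaut{\vec\pi}$ throughout. The main obstacle is syntactic bookkeeping: the two-step formulas introduce conjugation-style twists ($\pi_0^{-1}\dot\sigma_1$ for composition, $\pi_1\dot\sigma_1^{-1}$ for inversion), and the argument must consistently recognize that these truncated twists agree with the full $\gaut{\vec\sigma^{-1}}$ or $\gaut{\vec\pi}$ action on low-support names --- precisely the service (1) renders. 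A secondary subtlety is that the equalities of sequences in (2) and (3) should be interpreted modulo the generic equivalence of \autoref{prop:generic equality of aut}, since mixing over $\dot\sG_\alpha$ determines names only up to forced equality at $1_\alpha$.
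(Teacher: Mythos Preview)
Your argument is correct and follows essentially the same route as the paper: induction on $\delta$, reducing the successor step to the two-step semi-direct product formulas, and handling limits via the finite support. The one notable difference is in (3): rather than running a separate induction as you do, the paper simply plugs the candidate sequence $\vec\sigma=\tup{\gaut{\vec\pi}\dot\pi_\alpha^{-1}\mid\alpha<\delta}$ into the already-established formula (2) and checks that $\vec\sigma\circ\vec\pi$ collapses to the identity sequence --- a one-line verification that avoids repeating the inductive bookkeeping.
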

\begin{proof}We prove all three simultaneously by induction on $\delta$. 

For $(1)$, we prove this by induction on $\alpha$ and by noting the following equality holds
\[\gaut{\vec\pi\restriction(\alpha+1)^{-1}}=\gaut{\dot\pi_\alpha^{-1}}\gaut{\vec\pi\restriction\alpha^{-1}}=\gaut{\vec\pi\restriction\alpha^{-1}}\gaut{\gaut{\vec\pi\restriction\alpha}(\dot\pi_\alpha^{-1})}.\]
(The last equality is exactly because we consider $\cG_{\alpha+1}$ as a two-step generic semi-direct product and applying \autoref{prop:almost-commutative}.)

For $(2)$, First note that if $C(\vec\pi)=\{\alpha\}$ and $\alpha>\max C(\vec\sigma)$, then $\gaut{\vec\pi}\gaut{\vec\sigma}=\gaut{\vec\sigma}\gaut{\gaut{\vec\sigma^{-1}}\vec\pi}$. Now we can prove $(2)$ by induction on $\alpha=\max(C(\vec\pi)\cup C(\vec\sigma))$:
\[\gaut{\vec\pi}\gaut{\vec\sigma}=\gaut{\vec\pi\restriction\alpha^\smallfrown\dot\pi_\alpha}\gaut{\vec\sigma\restriction\alpha^\smallfrown\dot\sigma_\alpha}=
\gaut{\vec\pi\restriction\alpha}\gaut{\dot\pi_\alpha}\gaut{\vec\sigma\restriction\alpha}\gaut{\dot\sigma_\alpha}=
\gaut{\vec\pi\restriction\alpha}\gaut{\vec\sigma\restriction\alpha}\gaut{\gaut{\vec\sigma\restriction\alpha^{-1}}\dot\pi_\alpha}\gaut{\dot\sigma_\alpha}.\]
The last equality follows from \autoref{prop:almost-commutative} as in the case of $(1)$. Using the induction hypothesis, we get that $\vec\pi\restriction\alpha\circ\vec\sigma\restriction\alpha$ equals to $\tup{(\gaut{\vec\sigma\restriction\alpha^{-1}}\dot\pi_\beta)\dot\sigma_\beta\mid\beta<\alpha}$ and $\gaut{\gaut{\sigma\restriction\alpha^{-1}}\dot\pi_\alpha}\gaut{\dot\sigma_\alpha}=\gaut{\left(\gaut{\vec\sigma\restriction\alpha^{-1}}\dot\pi_\alpha\right)\dot\sigma_\alpha}$ is the last nontrivial coordinate of $\vec\pi\circ\vec\sigma$. Applying $(1)$ finishes the proof, since $\vec\sigma\restriction\alpha^{-1}=\vec\sigma^{-1}\restriction\alpha$.

Finally, to prove $(3)$ we use $(2)$, denote by $\vec\sigma=\tup{\gaut{\vec\pi}\dot\pi_\alpha^{-1}\mid\alpha<\delta}$. We will show that $\vec\sigma\circ\vec\pi=\id$:
\[\vec\sigma\circ\vec\pi=\tup{(\gaut{\vec\pi^{-1}}(\gaut{\vec\pi}\dot\pi_\alpha^{-1}))\dot\pi_\alpha\mid\alpha<\delta}=\tup{\dot\pi_\alpha^{-1}\dot\pi_\alpha\mid\alpha<\delta}=\tup{\id^\bullet\mid\alpha<\delta}.\qedhere\]
\end{proof}
We now get two corollaries from the above computations and \autoref{prop:preserving names}.
\begin{corollary}\label{cor:tenacious composition}
Suppose that $p\forces_\delta\vec\pi=\vec\tau$ and $\gaut{\vec\pi}p=p$, then the following is true:
\begin{enumerate}
\item $p\forces_\delta\vec\pi^{-1}=\vec\tau^{-1}$.
\item For every $\vec\sigma$, $p\forces_\delta\vec\sigma\circ\vec\pi=\vec\sigma\circ\vec\tau$.
\end{enumerate}
\end{corollary}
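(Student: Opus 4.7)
The plan is to reduce each part coordinate-by-coordinate to \autoref{prop:preserving names}, fed through the explicit formulas of \autoref{prop:aut seq calculations}. In each coordinate the desired equality will split into two substitutions that are valid below $p$: swapping an outer $\gaut{\vec\pi}$ for $\gaut{\vec\tau}$, which is exactly \autoref{prop:preserving names}, and swapping an inner $\dot\pi_\alpha^{\pm 1}$ for $\dot\tau_\alpha^{\pm 1}$, which is the Symmetry Lemma applied below the fixed point $\gaut{\vec\tau}p=p$. The latter fixed-point equality is automatic from the hypotheses: \autoref{prop:generic equality of aut} yields $\gaut{\vec\tau}p=\gaut{\vec\pi}p$, which equals $p$ by assumption.

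For~(1), I would use part~(3) of \autoref{prop:aut seq calculations} to write the $\alpha$-th coordinate of $\vec\pi^{-1}$ as $\gaut{\vec\pi}\dot\pi_\alpha^{-1}$, and similarly for $\vec\tau^{-1}$. Applying \autoref{prop:preserving names} to the name $\dot\pi_\alpha^{-1}$ yields $p\forces_\delta\gaut{\vec\pi}\dot\pi_\alpha^{-1}=\gaut{\vec\tau}\dot\pi_\alpha^{-1}$. From $p\forces_\delta\dot\pi_\alpha=\dot\tau_\alpha$ we also obtain $p\forces_\delta\dot\pi_\alpha^{-1}=\dot\tau_\alpha^{-1}$, and the Symmetry Lemma combined with $\gaut{\vec\tau}p=p$ then gives $p\forces_\delta\gaut{\vec\tau}\dot\pi_\alpha^{-1}=\gaut{\vec\tau}\dot\tau_\alpha^{-1}$. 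Chaining these two equalities produces the coordinate identity.

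For~(2), I would first invoke~(1) to obtain $p\forces_\delta\vec\pi^{-1}=\vec\tau^{-1}$, noting that $\gaut{\vec\pi^{-1}}p=p$ automatically, since $\gaut{\vec\pi^{-1}}$ is the group-theoretic inverse of $\gaut{\vec\pi}$. Part~(2) of \autoref{prop:aut seq calculations} expresses the $\alpha$-th coordinates of $\vec\sigma\circ\vec\pi$ and $\vec\sigma\circ\vec\tau$ as $(\gaut{\vec\pi^{-1}}\dot\sigma_\alpha)\dot\pi_\alpha$ and $(\gaut{\vec\tau^{-1}}\dot\sigma_\alpha)\dot\tau_\alpha$ respectively. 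Now \autoref{prop:preserving names}, applied with $\vec\pi^{-1}$ and $\vec\tau^{-1}$ in place of $\vec\pi$ and $\vec\tau$, gives $p\forces_\delta\gaut{\vec\pi^{-1}}\dot\sigma_\alpha=\gaut{\vec\tau^{-1}}\dot\sigma_\alpha$, and combining this with the coordinate hypothesis $p\forces_\delta\dot\pi_\alpha=\dot\tau_\alpha$ yields the required equality. The only mild obstacle in the whole argument is verifying at each reinvocation that the hypotheses of \autoref{prop:preserving names} really are preserved under passage to inverses; this reduces to the observation that pointwise fixers are closed under group-theoretic inverses, after which both parts are clean coordinate-wise applications of the tools already in hand.
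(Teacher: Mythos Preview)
Your proposal is correct and follows essentially the same route as the paper's proof: both arguments unwind the coordinates of $\vec\pi^{-1}$ and $\vec\sigma\circ\vec\pi$ via \autoref{prop:aut seq calculations}, then chain an application of \autoref{prop:preserving names} (to swap the outer $\gaut{\vec\pi}$ for $\gaut{\vec\tau}$) with an application of the Symmetry Lemma at the fixed point $\gaut{\vec\tau}p=p$ (to swap the inner $\dot\pi_\alpha^{-1}$ for $\dot\tau_\alpha^{-1}$). You are slightly more explicit than the paper in noting that $\gaut{\vec\pi^{-1}}p=p$ must be checked before reinvoking \autoref{prop:preserving names} for part~(2), but otherwise the two proofs are the same.
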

\begin{proof}
To prove (1) first observe that $p\forces_\delta\dot\pi_\alpha=\dot\tau_\alpha$, and therefore $p\forces_\delta\dot\pi_\alpha^{-1}=\dot\tau_\alpha^{-1}$. Using the Symmetry Lemma, and the assumption that $\gaut{\vec\pi}p=\gaut{\vec\tau}p=p$, we get that $p\forces_\delta\gaut{\vec\pi}\dot\pi_\alpha^{-1}=\gaut{\vec\pi}\dot\tau_\alpha^{-1}\text{ and }\gaut{\vec\tau}\dot\pi_\alpha^{-1}=\gaut{\vec\tau}\dot\tau_\alpha^{-1}$.

Now applying \autoref{prop:preserving names} we also get that $p\forces_\delta\gaut{\vec\pi}\dot\pi_\alpha^{-1}=\gaut{\vec\tau}\dot\pi_\alpha^{-1}$. Therefore we obtain that for every $\alpha<\delta$,
\[p\forces_\delta\gaut{\vec\pi}\dot\pi_\alpha^{-1}=\gaut{\vec\tau}\dot\pi_\alpha^{-1}=\gaut{\vec\tau}\dot\tau_\alpha^{-1},\]
which is exactly the same as saying that $p\forces_\delta\vec\pi^{-1}=\vec\tau^{-1}$.

From this (2) follows immediately, as we have that $\vec\sigma\circ\vec\pi=\tup{(\gaut{\vec\pi^{-1}}\dot\sigma_\alpha)\dot\pi_\alpha\mid\alpha<\delta}$ and $\vec\sigma\circ\vec\tau=\tup{(\gaut{\vec\tau^{-1}}\dot\sigma_\alpha)\dot\tau_\alpha\mid\alpha<\delta}$. So by the assumption, (1) and \autoref{prop:preserving names}, we get that $p\forces_\delta(\gaut{\vec\pi^{-1}}\dot\sigma_\alpha)\dot\pi_\alpha=(\gaut{\vec\tau^{-1}}\dot\sigma_\alpha)\dot\tau_\alpha$ for all $\alpha<\delta$, as wanted.
\end{proof}
We finish this section by showing that the requirement on each step of the iteration being respected by the previous aggregate of automorphisms is a very nontrivial requirement. 
\begin{definition}We say that a notion of forcing $\PP$ is \textit{weakly homogeneous} if whenever $p,q\in\PP$ there is some $\pi\in\aut(\PP)$ such that $\pi p$ is compatible with $q$. If $\sG$ is a subgroup of $\aut(\PP)$ we say that \textit{$\sG$ witnesses the homogeneity of $\PP$} if we can find such $\pi$ in $\sG$.
\end{definition}
\begin{theorem}
Suppose that $\tup{\dot\QQ_\alpha,\dot\sG_\alpha\mid\alpha<\delta},\tup{\PP_\alpha,\cG_\alpha\mid\alpha\leq\delta}$ are as before, and that for every $\alpha<\delta$, $1_\alpha\forces_\alpha\dot\sG_\alpha\text{ witnesses the homogeneity of }\dot\QQ_\alpha$. Then $\cG_\delta$ witnesses the homogeneity of $\PP_\delta$.
\end{theorem}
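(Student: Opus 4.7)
The approach is induction on $\delta$. The base case $\delta=0$ is trivial, and the limit case is essentially free: since $\PP_\delta$ is a finite support iteration, any pair $p,p'\in\PP_\delta$ has supports contained in some $\alpha<\delta$, so the induction hypothesis yields $\pi\in\cG_\alpha$ with $\pi p$ compatible with $p'$; since $\cG_\delta$ is the direct limit, $\pi$ represents an element of $\cG_\delta$ and compatibility is preserved.

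The real content is the successor step $\delta=\alpha+1$. Given $\tup{p,\dot q},\tup{p',\dot q'}\in\PP_\alpha\ast\dot\QQ_\alpha$, I would first invoke the induction hypothesis on $\PP_\alpha$ to obtain $\pi\in\cG_\alpha$ and $r\in\PP_\alpha$ with $r\leq_\alpha\pi p,p'$. Below $r$, both $\pi\dot q$ and $\dot q'$ are forced to be conditions in $\dot\QQ_\alpha$, and by hypothesis $\dot\sG_\alpha$ witnesses the homogeneity of $\dot\QQ_\alpha$ in the extension by $\PP_\alpha$. An application of the maximum principle (we work in $\ZFC$) yields a $\PP_\alpha$-name $\dot\tau$ with $r\forces_\alpha\dot\tau\in\dot\sG_\alpha$ and $r\forces_\alpha\dot\tau(\pi\dot q)$ is compatible with $\dot q'$ in $\dot\QQ_\alpha$. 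Mixing $\dot\tau$ with $\id^\bullet$ on a maximal antichain of conditions incompatible with $r$, we obtain a name---still obtained by mixing over names appearing in $\dot\sG_\alpha$---such that $1_\alpha\forces_\alpha\dot\tau\in\dot\sG_\alpha$, while retaining the desired behavior below $r$.

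Now set $\dot\sigma=\pi^{-1}\dot\tau$; because $\pi\in\cG_\alpha$ respects $\dot\sG_\alpha$, this is again a mixing-over-$\dot\sG_\alpha$ name forced by $1_\alpha$ to lie in $\dot\sG_\alpha$, so $\gaut{\tup{\pi,\dot\sigma}}\in\cG_\alpha\ast\dot\sG_\alpha=\cG_{\alpha+1}$. Its action is
\[\gaut{\tup{\pi,\dot\sigma}}\tup{p,\dot q}=\tup{\pi p,\pi\dot\sigma(\pi\dot q)}=\tup{\pi p,\dot\tau(\pi\dot q)},\]
and this pair is compatible with $\tup{p',\dot q'}$: the first coordinates are compatible at $r$, and $r$ forces the second coordinates to be compatible conditions of $\dot\QQ_\alpha$, so any common extension of $\dot\tau(\pi\dot q)$ and $\dot q'$ witnessed below $r$ produces a common extension of the pairs in $\PP_{\alpha+1}$.

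The main obstacle I anticipate is the bookkeeping around converting the application of the maximum principle into a bona-fide member of the generic semi-direct product: we need $\dot\tau$ to be a mixing-over-$\dot\sG_\alpha$ name forced by $1_\alpha$ (not merely by $r$) to lie in $\dot\sG_\alpha$, and we need $\pi^{-1}\dot\tau$ to remain admissible as the second coordinate of an element of $\cG_{\alpha+1}$. This is where the hypothesis that $\cG_\alpha$ respects $\dot\sG_\alpha$ does its real work; once it is unpacked, the compatibility claim and the limit step are immediate.
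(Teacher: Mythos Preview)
Your argument is correct and is essentially the same as the paper's: both proceed coordinate-by-coordinate, using the homogeneity of each $\dot\QQ_\alpha$ to adjust one entry at a time. The paper presents this as a direct argument on the finite union of the supports of $p$ and $q$ (and notes that the automorphism sequence can be reordered so the coordinates are handled bottom-up), whereas you package the same idea as an induction on $\delta$; your successor step is precisely the ``one step'' the paper alludes to, and you are more explicit about the mixing needed to turn the locally obtained $\dot\tau$ into a legitimate second coordinate of an element of $\cG_{\alpha+1}$.
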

This is a nontrivial theorem because the iteration of two weakly homogeneous forcings need not be weakly homogeneous itself. For example, $\PP$ chooses $0$ or $1$, and depending on the result $\dot\QQ$ is either trivial or chooses again $0$ or $1$, is an iteration where $\PP$ is weakly homogeneous and $\dot\QQ$ is forced to be weakly homogeneous, but the iteration itself is not.
\begin{proof}[Sketch of Proof]
Here we take advantage of the consequence of \autoref{prop:almost-commutative} that given any automorphism sequence $\vec\pi$ with $C(\vec\pi)=\{\alpha_1,\ldots,\alpha_n\}$ there are names $\dot\sigma_{\alpha_i}$ such that \[\gaut{\vec\pi}=\gaut{\dot\pi_{\alpha_1}}\cdots\gaut{\dot\pi_{\alpha_n}}=\gaut{\dot\sigma_{\alpha_n}}\cdots\gaut{\dot\sigma_{\alpha_1}}.\]

Suppose that $p,q\in\PP_\delta$ then there is a finite subset of $\delta$, some $\{\alpha_1,\ldots,\alpha_n\}$ such that both $p$ and $q$ have their support included in this finite set. Now we proceed by recursion on $i\leq n$ to make $p(\alpha_i)$ compatible with $q(\alpha_i)$, and this defines the wanted automorphism sequence.
\end{proof}
\section{Combining filters and the notion of supports}\label{section:ext filters}
In a symmetric system we have three objects: the forcing, the automorphism group and the filter of subgroups. The iteration of the forcings is just the finite support iteration, and we saw how to extend the automorphisms when we assume some additional (and necessary condition). Now we would like to extend the filters. However we have a problem if we want to preserve the normality of the filters, and we do. 

Consider the conjugation $\vec\pi\circ\vec\sigma\circ\vec\pi^{-1}$. By \autoref{prop:aut seq calculations} we can calculate this sequence to be the following sequence:
\begin{align*}
\vec\pi\circ\vec\sigma\circ\vec\pi^{-1}
&=\tup{(\gaut{\vec\sigma^{-1}}\dot\pi_\alpha)\dot\sigma_\alpha\mid\alpha<\delta}\circ\tup{\gaut{\vec\pi}\dot\pi^{-1}_\alpha\mid\alpha<\delta}\\
&=\tup{\gaut{\vec\pi}((\gaut{\vec\sigma^{-1}}\dot\pi_\alpha)\dot\sigma_\alpha)\gaut{\vec\pi}\dot\pi_\alpha^{-1}\mid\alpha<\delta}\label{eqn:conjugation}\tag{\textbf{C1}}\\
&=\gaut{\vec\pi}\tup{(\gaut{\vec\sigma^{-1}}\dot\pi_\alpha)\dot\sigma_\alpha\dot\pi_\alpha^{-1}\mid\alpha<\delta}
\end{align*}
Unfortunately this does not look like pointwise conjugation, unless $\gaut{\vec\sigma^{-1}}\dot\pi_\alpha=\dot\pi_\alpha$, and even then we still have that $\gaut{\vec\pi}$ acting outside the sequence. And this means that if we want to ensure that in some sense large groups can be conjugated and remain inside the ``iteration of the filters'', then some nontrivial assumptions are needed. While there is no escape of these additional assumptions, we chose to place them elsewhere in order to make the notion of a ``large subgroup'' slightly clearer.

\begin{definition}
Suppose that $\tup{\dot\QQ_\alpha,\dot\sG_\alpha\mid\alpha<\delta}$ is a finite support iteration system with the same requirements as in the previous section, and in addition that for every $\alpha<\delta$ we have a $\PP_\alpha$-name, $\dot\sF_\alpha$ such that the following holds:
\begin{enumerate}
\item $1_\alpha\forces_\alpha\dot\sF_\alpha\text{ is a normal filter of subgroups of }\dot\sG_\alpha$.
\item $\dot\sF_\alpha$ is respected by all the automorphisms in $\cG_\alpha$.
\end{enumerate}
We say that $\vec H=\tup{\dot H_\alpha\mid\alpha<\delta}$ is an \textit{$\cF_\delta$-support} if the following conditions hold:
\begin{enumerate}
\item For every $\alpha<\delta$, $\dot H_\alpha$ is a $\PP_\delta$-name.
\item $1_\delta\forces_\delta\dot H_\alpha\in\dot\sF_\alpha$.
\item $1_\delta\forces_\delta\{\alpha<\delta\mid\dot H_\alpha\neq\dot\sG_\alpha\}\text{ is finite}$.
\end{enumerate}
\end{definition}
The idea behind the definition is two-fold. First of all, we want the supports to be closed under mixing, it will prove to be both useful and essential as we progress. The second is that we want to allow some genericity regarding what would be the groups which support a name from each step. In the case of a limit step, we would have liked to say that something will turn out in the intermediate model if it was ``more or less'' definable from finitely many objects until that limit steps. But even if we know the definition, we have no way of knowing in the ground model, in advance, what are these objects and from which steps of the iterations they brought into existence. Allowing some genericity in the choice of the finite set of parameters, as well the parameters themselves, grants us some freedom when we come to apply mixing in our arguments.

If the context is clear, we will omit $\cF_\delta$ from the notation and terminology. We will also write $p\forces_\delta\vec\pi\in\vec H$, if $p\forces_\delta\dot\pi_\alpha\in\dot H_\alpha$, for every $\alpha<\delta$, and similarly $p\forces_\delta\vec H\subseteq\vec H'$. In the same spirit we will write $\vec H\cap\vec H'$ to denote the names for pointwise intersection of the two supports, and similarly for other set theoretic operations.

\begin{proposition}\label{prop:correctness}
Suppose that $\vec H$ is a support and $q\leq_\delta p$. Then whenever $\vec\pi$ is such that $q\forces_\delta\vec\pi\in\vec H$, there is some $\vec\sigma$ such that $p\forces_\delta\vec\sigma\in\vec H$ and $q\forces_\delta\vec\pi=\vec\sigma$.
\end{proposition}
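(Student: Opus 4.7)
The plan is to obtain $\vec\sigma$ by mixing $\vec\pi$ with the trivial sequence across a maximal antichain below $p$, exploiting the fact that a subgroup always contains the identity. The key observation is that below $q$ we already know $\dot\pi_\alpha\in\dot H_\alpha$ by hypothesis, while elsewhere below $p$ we may safely substitute $\id^\bullet$, which lies in $\dot H_\alpha$ simply because $\dot H_\alpha$ is forced to be a subgroup of $\dot\sG_\alpha$.

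Concretely, for every $\alpha\notin C(\vec\pi)$ I would set $\dot\sigma_\alpha:=\id^\bullet$; this forces $C(\vec\sigma)\subseteq C(\vec\pi)$, so $\vec\sigma$ remains a finite-support automorphism sequence, and on these coordinates $1_\alpha\forces_\alpha\dot\pi_\alpha=\id^\bullet=\dot\sigma_\alpha$, so $\vec\pi$ and $\vec\sigma$ agree trivially there. For the finitely many $\alpha\in C(\vec\pi)$ I would choose a maximal antichain $D_\alpha\subseteq\PP_\delta$ of extensions of $p$, each element of which is either below $q$ or incompatible with $q$, and then define $\dot\sigma_\alpha$ by mixing over $D_\alpha$: put $\dot\sigma_\alpha\restriction r=\dot\pi_\alpha\restriction r$ whenever $r\leq q$, and $\dot\sigma_\alpha\restriction r=\id^\bullet\restriction r$ whenever $r$ is incompatible with $q$.

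Verifying the two conclusions is then direct. For $q\forces_\delta\vec\sigma=\vec\pi$: every extension of $q$ is incompatible with the ``incompatible'' part of $D_\alpha$ and hence meets only the $\dot\pi_\alpha$-branch of the mix, yielding $q\forces_\delta\dot\sigma_\alpha=\dot\pi_\alpha$ coordinatewise. For $p\forces_\delta\vec\sigma\in\vec H$: any $s\leq_\delta p$ is compatible with some $r\in D_\alpha$, and a common extension forces either $\dot\sigma_\alpha=\dot\pi_\alpha\in\dot H_\alpha$ (if $r\leq q$, by the hypothesis $q\forces_\delta\dot\pi_\alpha\in\dot H_\alpha$) or $\dot\sigma_\alpha=\id^\bullet\in\dot H_\alpha$ (otherwise), so by density $p\forces_\delta\dot\sigma_\alpha\in\dot H_\alpha$. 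The only delicate point is bookkeeping: one must check that each $\dot\sigma_\alpha$ qualifies as a name for an element of $\dot\sG_\alpha$ obtained by mixing over names appearing in $\dot\sG_\alpha$. Since both $\dot\pi_\alpha$ (coming from an automorphism sequence) and $\id^\bullet$ qualify, and since $\dot H_\alpha\subseteq\dot\sG_\alpha$ is forced, the mix remains inside $\dot\sG_\alpha$---precisely the closure-under-mixing flexibility the author flagged as a design goal of supports.
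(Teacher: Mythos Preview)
Your idea---mixing $\dot\pi_\alpha$ with $\id^\bullet$---is exactly what the paper does, but you carry out the mixing at the wrong level of the iteration, and this is a genuine gap. You build $\dot\sigma_\alpha$ by mixing over a maximal antichain $D_\alpha\subseteq\PP_\delta$ of conditions below $p$. The resulting $\dot\sigma_\alpha$ is then a $\PP_\delta$-name, and there is no reason it should be (equivalent to) a $\PP_\alpha$-name: two conditions $r_1,r_2\in D_\alpha$ with $r_1\leq_\delta q$ and $r_2\perp q$ may very well have $r_1\restriction\alpha=r_2\restriction\alpha$, so the value of $\dot\sigma_\alpha$ genuinely depends on coordinates $\geq\alpha$. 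But the definition of an automorphism sequence requires $1_\alpha\forces_\alpha\dot\sigma_\alpha\in\dot\sG_\alpha$; in particular $\dot\sigma_\alpha$ must be a $\PP_\alpha$-name obtained by mixing names appearing in $\dot\sG_\alpha$. Without this, $\gaut{\vec\sigma}$ is not an element of $\cG_\delta$ and the proposition (as it is used downstream, e.g.\ in the analysis of $\vec H(p)$) would not deliver what is needed.

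The fix is exactly what the paper does: perform the mixing at level $\alpha$. Declare that $q\restriction\alpha\forces_\alpha\dot\sigma_\alpha=\dot\pi_\alpha$ and that every $r\in\PP_\alpha$ incompatible with $q\restriction\alpha$ forces $\dot\sigma_\alpha=\id^\bullet$. Now $\dot\sigma_\alpha$ is a $\PP_\alpha$-name, $1_\alpha\forces_\alpha\dot\sigma_\alpha\in\{\dot\pi_\alpha,\id^\bullet\}^\bullet\subseteq\dot\sG_\alpha$, and $C(\vec\sigma)\subseteq C(\vec\pi)$. As a bonus you get the stronger conclusion $1_\delta\forces_\delta\vec\sigma\in\vec H$, not merely $p\forces_\delta\vec\sigma\in\vec H$; and of course $q\leq_\delta q\restriction\alpha$ gives $q\forces_\delta\dot\sigma_\alpha=\dot\pi_\alpha$.
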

\begin{proof}
We define $\vec\sigma$ coordinate-wise. For every $\alpha$, let $\dot\sigma_\alpha$ be a name, constructed by mixing for example, such that $q\restriction\alpha\forces_\alpha\dot\sigma_\alpha=\dot\pi_\alpha$, and any condition $r\in\PP_\alpha$ incompatible with $q\restriction\alpha$ satisfies $r\forces_\alpha\dot\sigma_\alpha=\id^\bullet$.

If $\alpha\notin C(\vec\pi)$, then we necessarily defined $\dot\sigma_\alpha$ to be such that $1_\alpha\forces_\alpha\dot\sigma_\alpha=\id^\bullet$, therefore $C(\vec\sigma)\subseteq C(\vec\pi)$. If $\alpha\in C(\vec\pi)$ and therefore $1_\alpha\forces_\alpha\dot\sigma_\alpha\in\{\id^\bullet,\dot\pi_\alpha\}^\bullet$. In either case $1_\delta\forces_\delta\vec\sigma\in\vec H$, so $p\forces_\delta\vec\sigma\in\vec H$.
\end{proof}
\begin{definition}

We say that a support $\vec H=\tup{\dot H_\alpha\mid\alpha<\delta}$ is an \textit{excellent support} if:
\begin{enumerate}
\item For every $\alpha$, $\dot H_\alpha$ is a $\PP_\alpha$-name.
\item The set $C(\vec H)=\{\alpha<\delta\mid 1_\alpha\nforces_\alpha\dot H_\alpha=\dot\sG_\alpha\}$ is finite.
\end{enumerate}
\end{definition}
Namely, if a support is some object which is somewhat generic in the choice of nontrivial coordinates, then an excellent support is much closer to being a concrete object in the ground model. The following is an obvious observation.
\begin{proposition}
If $\vec H$ is an $\cF_\delta$-support, then there is a dense open set $D\subseteq\PP_\delta$, such that for every $p\in D$ there is an excellent $\cF_\delta$-support, $\vec H_p$ and $p\forces_\delta\dot H_\alpha=\dot H_{p,\alpha}$ for every $\alpha<\delta$.\qed
\end{proposition}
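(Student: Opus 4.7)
The plan is to exhibit $D$ explicitly as the set of conditions which already commit to concrete excellent data, and then read $\vec H_p$ off directly. Specifically, let $D$ consist of all $p \in \PP_\delta$ for which there exist a finite set $S_p \subseteq \delta$ in the ground model and, for each $\alpha \in S_p$, a $\PP_\alpha$-name $\dot\tau_\alpha$ appearing in $\dot\sG_\alpha$, such that $p \forces_\delta \{\alpha < \delta \mid \dot H_\alpha \neq \dot\sG_\alpha\} = \check{S_p}$ and $p \forces_\delta \dot H_\alpha = \dot\tau_\alpha$ for every $\alpha \in S_p$.

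To see $D$ is dense, take any $q \in \PP_\delta$. Since $1_\delta$ forces $\{\alpha \mid \dot H_\alpha \neq \dot\sG_\alpha\}$ to be finite, a routine density argument produces some $q_0 \leq q$ and a ground-model finite set $S$ with $q_0 \forces_\delta \{\alpha \mid \dot H_\alpha \neq \dot\sG_\alpha\} = \check S$. Enumerate $S$ as $\alpha_1 < \cdots < \alpha_n$ and process each coordinate in turn: at step $i$ the current condition already forces $\dot H_{\alpha_i} \in \dot\sG_{\alpha_i}$, so by mixing over a maximal antichain refining it, one finds a further strengthening which forces $\dot H_{\alpha_i}$ equal to some specific name appearing in $\dot\sG_{\alpha_i}$; since $\dot\sG_{\alpha_i}$ is itself a $\PP_{\alpha_i}$-name, any such name is automatically a $\PP_{\alpha_i}$-name. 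After $n$ steps we reach a $p \leq q$ lying in $D$. Openness is immediate: the forcing statements defining membership in $D$ persist to stronger conditions.

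Given $p \in D$, define $\vec H_p = \tup{\dot H_{p,\alpha} \mid \alpha < \delta}$ by setting $\dot H_{p,\alpha} = \dot\sG_\alpha$ for $\alpha \notin S_p$, while for $\alpha \in S_p$ taking $\dot H_{p,\alpha}$ to be the $\PP_\alpha$-name obtained by mixing---equal to $\dot\tau_\alpha$ below $p \restriction \alpha$ and equal to $\dot\sG_\alpha$ on conditions incompatible with $p \restriction \alpha$, in the style of \autoref{prop:correctness}. Each $\dot H_{p,\alpha}$ is then a $\PP_\alpha$-name, $C(\vec H_p) \subseteq S_p$ is finite, and the equality $p \forces_\delta \dot H_\alpha = \dot H_{p,\alpha}$ follows coordinate-wise: below $p$, the name $\dot H_{p,\alpha}$ is interpreted as $\dot\tau_\alpha$ when $\alpha \in S_p$ and as $\dot\sG_\alpha = \dot H_\alpha$ when $\alpha \notin S_p$.

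The one subtlety to watch is that $\vec H_p$ must be a valid support \emph{globally}, not merely below $p$: the membership requirement $1_\alpha \forces_\alpha \dot H_{p,\alpha} \in \dot\sG_\alpha$ has to hold everywhere. This is precisely why we mix with the harmless default $\dot\sG_\alpha$ outside the cone below $p \restriction \alpha$---there $\dot H_{p,\alpha}$ is interpreted as the whole group, which is trivially valid, whereas below $p \restriction \alpha$ it is interpreted as $\dot\tau_\alpha$, whose validity is already forced by $1_\alpha$ via our choice of $\dot\tau_\alpha$ as a name appearing in $\dot\sG_\alpha$. This mixing is essentially the only technical point of the proof.
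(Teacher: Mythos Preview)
The paper marks this proposition as an obvious observation and supplies no proof; your argument correctly fills in the routine details along the natural line one would expect.

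One small slip in the final paragraph: you justify validity of $\dot\tau_\alpha$ below $p\restriction\alpha$ by saying it is ``already forced by $1_\alpha$ via our choice of $\dot\tau_\alpha$ as a name appearing in $\dot\sG_\alpha$''. But a name merely \emph{appearing} in $\dot\sF_\alpha$ (the paper's ``$\dot\sG_\alpha$'' in clause~(2) of the support definition is evidently a typo for $\dot\sF_\alpha$, which you have carried over) need only be forced into $\dot\sF_\alpha$ by \emph{some} condition, not by $1_\alpha$. The correct justification is that $p\forces_\delta\dot\tau_\alpha=\dot H_\alpha\in\dot\sF_\alpha$, and since both $\dot\tau_\alpha$ and $\dot\sF_\alpha$ are $\PP_\alpha$-names this descends to $p\restriction\alpha\forces_\alpha\dot\tau_\alpha\in\dot\sF_\alpha$; combined with the default $\dot\sG_\alpha$ off that cone, this yields $1_\alpha\forces_\alpha\dot H_{p,\alpha}\in\dot\sF_\alpha$ as required. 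The conclusion is unaffected.
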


If $\vec H$ is a $\cF_\delta$-support and $p\in\PP_\delta$, we will use $\vec H(p)$ to denote the group generated by $\{\gaut{\vec\pi}\mid p\forces_\delta\vec\pi\in\vec H\}$. Note that if $q\leq_\delta p$, then $\vec H(p)$ is a subgroup of $\vec H(q)$, and they act the same on the cone below $q$, especially if every $\gaut{\vec\pi}\in\vec H(q)$ satisfies $\gaut{\vec\pi}q=q$.

We can therefore view $\vec H(p)$ as some sort of an approximation to the group $\vec H$ in the full generic extension. And in fact, we will see later that these form something that looks a bit like a normal filter of subgroups, in a generic sense. This will give us a natural way to define the class of names that will become our intermediate model. And for this we need the following definition.
\begin{definition}
We say that $\dot x$ is \textit{$\cF_\delta$-respected} if there is a support $\vec H$ and a pre-dense $D\subseteq\PP_\delta$, such that for every $p\in D$, and whenever $\gaut{\vec\pi}\in\vec H(p)$, $p\forces_\delta\gaut{\vec\pi}\dot x=\dot x$. We say that $\dot x$ is \textit{hereditarily $\cF_\delta$-respected} if it is $\cF_\delta$-respected, and every $\dot y$ which appears in $\dot x$ is also hereditarily $\cF_\delta$-respected.
\end{definition}

\subsection{Tenacity}
We digress momentarily from the discussion above to introduce a new notion of a symmetric system, one that will help us simplify a lot of the definitions.
\begin{definition}
Let $\tup{\PP,\sG,\sF}$ be a symmetric system. A condition $p\in\PP$ is called \textit{$\sF$-tenacious} if there exists $H\in\sF$ such that for every $\pi\in H$, $\pi p=p$. We say that the forcing $\PP$ is \textit{$\sF$-tenacious} if there is a dense subset of $\sF$-tenacious conditions.
\end{definition}
We will often write that $\tup{\PP,\sG,\sF}$ is a tenacious system, or just that $\PP$ is tenacious if the symmetric system is understood from the context.
\begin{example}\label{ex:cohen}
The easiest example of a tenacious system is Cohen's first model. We force with finite partial functions $p\colon\omega\times\omega\to\{0,1\}$ ordered by reverse inclusion; the automorphisms are induced by finitary automorphisms of $\omega$, $\pi p(\pi n,m)=p(n,m)$; and $\sF$ is the filter generated by $\{\fix(E)\mid E\in[\omega]^{<\omega}\}$ where $\fix(E)$ is the pointwise stabilizer of $E$.

Now given any $p\in\PP$, its domain is finite, so taking $E$ to be the projection of $\dom p$ onto the left coordinate gives us $\fix(E)$ witnessing the tenacity of $\PP$.
\smallskip

On the other hand, if we take the same example, but $\sF$ is the trivial filter, $\{\sG\}$, then only the maximum condition is tenacious.
\end{example}
\begin{remark}
With Yair Hayut we have shown that every symmetric system is equivalent to a tenacious one. The details can be found in \autoref{section:appendix}. This, however, does not minimize the following theorems. They are about choosing a presentation of the forcings in order to simplify the treatment of the iteration, much like the presentation of the iteration as a Kunen- or Jech-style iteration has some impact on how easily we can discuss the symmetric iteration.
\end{remark}
\begin{theorem}\label{thm:iterated tenacity}
Suppose that $\tup{\dot\QQ_\alpha,\dot\sG_\alpha,\dot\sF_\alpha\mid\alpha<\delta}$ is an iteration of symmetric systems such that the two additional conditions hold:
\begin{enumerate}
\item For every $\alpha$, $\dot\QQ_\alpha,\dot\sG_\alpha$ and $\dot\sF_\alpha$ are hereditarily $\cF_\alpha$-respected.
\item For every $\alpha$, $1_\alpha\forces_\alpha\dot\QQ_\alpha\text{ is }\dot\sF_\alpha\text{-tenacious}$.
\end{enumerate}
Then there exists a dense subset $D\subseteq\PP_\delta$ such that for every $p\in D$ there is an excellent support $\vec H$ such that whenever $p\forces_\delta\vec\pi\in\vec H$, we have $\gaut{\vec\pi}p=p$. Therefore, every $\gaut{\vec\pi}\in\vec H(p)$ will satisfy $\gaut{\vec\pi}p=p$.
\end{theorem}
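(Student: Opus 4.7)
The proof goes by induction on $\delta$. For $\delta=0$ it is vacuous, and the limit case is handled by finite support: given $p\in\PP_\delta$, pick $\gamma<\delta$ with $\supp(p)\subseteq\gamma$, apply the inductive hypothesis at stage $\gamma$ to obtain $p'\leq p$ with an excellent $\cF_\gamma$-support $\vec H^{(\gamma)}$ fixing $p'$, and extend $\vec H^{(\gamma)}$ to an $\cF_\delta$-support $\vec H$ by padding with (names for) $\dot\sG_\beta$ for $\beta\in[\gamma,\delta)$. Since $p'$ is trivial beyond $\gamma$, the padded coordinates act trivially, $C(\vec H)=C(\vec H^{(\gamma)})$ is still finite, and $\vec H$ is excellent.

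The substance lies in the successor step $\delta=\alpha+1$. Given $\tup{p,\dot q}\in\PP_{\alpha+1}$, I first apply the inductive hypothesis to $p\in\PP_\alpha$ to locate $p'\leq p$ together with an excellent $\cF_\alpha$-support $\vec H^{(\alpha)}$ satisfying $\gaut{\vec\pi}p'=p'$ whenever $p'\forces_\alpha\vec\pi\in\vec H^{(\alpha)}$. By condition~(1), every name appearing in $\dot\QQ_\alpha$ is hereditarily $\cF_\alpha$-respected; remixing $\dot q$ from such names and strengthening $p'$ into the corresponding pre-dense set, I may assume $\dot q$ itself is $\cF_\alpha$-respected via some support $\vec K$, and replace $\vec H^{(\alpha)}$ by $\vec H^{(\alpha)}\cap\vec K$. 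Condition~(2) then forces the tenacious conditions of $\dot\QQ_\alpha$ to be dense, so a mixing argument produces $\dot q'\leq\dot q$ (again a mix of hereditarily respected names) and a $\PP_\alpha$-name $\dot H_\alpha$ with $1_\alpha\forces_\alpha\dot H_\alpha\in\dot\sF_\alpha$ and $p'\forces_\alpha\dot H_\alpha\text{ witnesses the tenacity of }\dot q'$. Appending $\dot H_\alpha$ at coordinate $\alpha$ to $\vec H^{(\alpha)}$ yields an excellent $\cF_{\alpha+1}$-support $\vec H$, since $C(\vec H)=C(\vec H^{(\alpha)})\cup\set\alpha$ is finite. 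To verify the conclusion for $\tup{p',\dot q'}\forces_\delta\vec\pi\in\vec H$, decompose $\gaut{\vec\pi}=\gaut{\vec\pi\restriction\alpha}\gaut{\dot\pi_\alpha}$: applying $\gaut{\dot\pi_\alpha}$ first leaves the lower coordinates untouched and, by tenacity, returns $\dot q'$ below $p'$; then $\gaut{\vec\pi\restriction\alpha}$ fixes $p'$ by induction and fixes $\dot q'$ below $p'$ because of the respecting arranged above.

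The main difficulty is exactly this coordination. Tenacity alone yields only $\dot\pi_\alpha(\dot q')=\dot q'$, but $\gaut{\vec\pi\restriction\alpha}$ descends recursively into the $\alpha$-coordinate and may otherwise alter $\dot q'$, so we additionally need the lower-level automorphisms from $\vec H^{(\alpha)}$ to respect $\dot q'$. Condition~(1) is precisely what lets us arrange this, provided that $\dot q'$ is chosen as a mixing of hereditarily respected names appearing in $\dot\QQ_\alpha$ and $\vec H^{(\alpha)}$ is shrunk to lie inside the witnessing support for $\dot q'$. Handling these simultaneous demands, together with the bookkeeping of how excellent supports lift from $\cF_\alpha$ to $\cF_{\alpha+1}$, is where the proof requires the most care.
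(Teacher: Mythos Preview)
Your proposal is correct and follows essentially the same route as the paper: induction on $\delta$, with the limit case handled by finite support and padding with $\dot\sG_\beta$, and the successor case by combining the inductive excellent support on $\PP_\alpha$ with a tenacity witness $\dot H_\alpha$ for a suitably chosen $\dot q'$, after shrinking the lower support so that it also respects $\dot q'$. You have correctly isolated the crux, namely that $\gaut{\vec\pi\restriction\alpha}$ acts nontrivially on the $\alpha$-coordinate and so condition~(1) must be invoked to make $\dot q'$ stable under it; the paper's verification is exactly the decomposition $\gaut{\vec\pi}p=\gaut{\vec\pi\restriction\alpha}p'^\smallfrown\gaut{\vec\pi\restriction\alpha}\dot\pi_\alpha(\dot q')$ that you describe. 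One small point the paper makes explicit and you leave implicit: the auxiliary facts used to simultaneously arrange the support for $\dot q'$ (and $\dot H_\alpha$) are themselves stated \emph{under} the conclusion of the theorem, so one must note that there is no circularity because they are being applied at level $\alpha$, where the inductive hypothesis already gives the dense set of tenacious conditions.
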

We will say that $p$ satisfying the conclusion of the theorem is \textit{$\cF_\delta$-tenacious}, or just tenacious if $\cF_\delta$ is clear from context, and we will say that \textit{$\vec H$ is a witness for $p$'s tenacity} if it is an excellent support as in the conclusion of the theorem.

Before the proof we will prove the following equivalence, which will be useful in the proof of the theorem and all the proofs to come. So while the definition of being respected is one, we will generally opt for the equivalent definition. It is easier to work with, because tenacious conditions are stable (under ``most'' automorphisms), and stability is important.
\begin{proposition}
Under the conclusion of \autoref{thm:iterated tenacity}, $\dot x$ is $\cF_\delta$-respected if and only if there is a pre-dense subset $D\subseteq\PP_\delta$, such that for every $p\in D$, $p$ there is an excellent support $\vec H_p$ and whenever $\gaut{\vec\pi}\in\vec H_p(p)$, $p\forces_\delta\gaut{\vec\pi}\dot x=\dot x$.
\end{proposition}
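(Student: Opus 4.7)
The plan is to prove both directions using the conclusion of \autoref{thm:iterated tenacity}, together with \autoref{prop:correctness}, \autoref{prop:preserving names}, and the observation that every $\cF_\delta$-support admits an excellent representation on a dense open set. The central technical device, applied in both directions, is to intersect an excellent version of the ambient support with an excellent tenacity witness; since the coordinate-wise intersection of two excellent supports is again excellent (its $C$-set is contained in the union of the two finite $C$-sets), the result is a single excellent support that both behaves like the original and witnesses the tenacity of the chosen condition.

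For $(\Rightarrow)$, given a support $\vec H$ and pre-dense $D$ witnessing respectedness of $\dot x$, for each $p\in D$ I refine to some $p'\leq p$ which (i) lies in the dense open set where $\vec H$ admits an excellent representation $\vec H'$ with $p'\forces_\delta\vec H=\vec H'$, and (ii) is tenacious with excellent witness $\vec K$. I set $\vec H_{p'}=\vec H'\cap\vec K$, an excellent support, and take $D^*$ to be the collection of all such $p'$'s (dense, hence pre-dense). For $\gaut{\vec\pi}\in\vec H_{p'}(p')$, membership in $\vec K$ gives $\gaut{\vec\pi}p'=p'$, while the containment $p'\forces_\delta\vec\pi\in\vec H$ together with \autoref{prop:correctness} supplies $\vec\sigma$ with $p\forces_\delta\vec\sigma\in\vec H$ and $p'\forces_\delta\vec\pi=\vec\sigma$. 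The respectedness assumption yields $p\forces_\delta\gaut{\vec\sigma}\dot x=\dot x$, whence $p'\forces_\delta\gaut{\vec\sigma}\dot x=\dot x$, and \autoref{prop:preserving names} (with hypothesis $\gaut{\vec\pi}p'=p'$) transfers this to $p'\forces_\delta\gaut{\vec\pi}\dot x=\dot x$.

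For $(\Leftarrow)$, I take a maximal antichain $A\subseteq\PP_\delta$ of tenacious conditions, each $a\in A$ lying below some chosen $q(a)\in D$, with excellent tenacity witness $\vec K_a$; such $A$ exists because tenacious conditions are dense and $D$ is pre-dense. Setting $\vec H^{(a)}=\vec H_{q(a)}\cap\vec K_a$, I define $\vec H$ by mixing the $\vec H^{(a)}$'s coordinate-wise so that $a\forces_\delta\vec H=\vec H^{(a)}$. This is a support: each $\dot H_\alpha$ is a $\PP_\delta$-name forced into $\dot\sG_\alpha$, and the ``finite generically'' condition holds because a generic filter selects exactly one $a\in A$, below which $\{\alpha:\dot H_\alpha\neq\dot\sG_\alpha\}\subseteq C(\vec H^{(a)})$ is finite. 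Taking the new pre-dense set to be $A$, essentially the same argument as in the previous paragraph---with $a$ playing the role of $p'$ and $q(a)$ playing the role of $p$---shows that $\gaut{\vec\pi}\in\vec H(a)$ implies $a\forces_\delta\gaut{\vec\pi}\dot x=\dot x$.

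The main obstacle, which is what the tenacity hypothesis is there to overcome, is that ``$\vec\pi\in\vec H$'' forced from a strictly stronger condition than the anchor does not on its own entail anything about the respectedness assumption, since that assumption quantifies over $\vec\pi$'s forced by the anchor. \autoref{prop:correctness} lets us name a $\vec\sigma$ forced from the anchor that agrees with $\vec\pi$ below the stronger condition, but \autoref{prop:preserving names} then demands $\gaut{\vec\pi}p=p$ in order to transport stability from $\gaut{\vec\sigma}\dot x$ to $\gaut{\vec\pi}\dot x$. This is precisely what intersecting with a tenacity witness guarantees, and it is the reason the proposition is stated under the conclusion of \autoref{thm:iterated tenacity}.
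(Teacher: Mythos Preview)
Your proof is correct and the forward direction is essentially identical to the paper's (your explicit intersection $\vec H'\cap\vec K$ is exactly what the paper means by ``by shrinking, if necessary, we can assume that $\vec H_q$ is witnessing the tenacity of $q$''). For the reverse direction the paper is considerably simpler: it just mixes the given excellent supports $\vec H_p$ over $D$ into a single support $\vec H$, so that for $p\in D$ one has $\vec H(p)=\vec H_p(p)$ and the hypothesis applies directly---your further refinement to a tenacious antichain below $D$ and intersection with tenacity witnesses is not wrong, merely unnecessary in this direction, since the hypothesis already quantifies over all of $\vec H_p(p)$ and no transfer via \autoref{prop:preserving names} is required. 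One small point: in both directions you argue for an arbitrary $\gaut{\vec\pi}\in\vec H_{p'}(p')$ as though $p'\forces_\delta\vec\pi\in\vec H_{p'}$, but $\vec H_{p'}(p')$ is only the group \emph{generated} by such automorphisms; the paper closes this gap by a short induction together with \autoref{cor:tenacious composition}.
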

\begin{proof}
In the one direction, if $\vec H$ is a support witnessing that $\dot x$ is $\cF_\delta$-respected and $D$ is a pre-dense set of conditions computing it correctly, let $D'$ be a pre-dense set obtained by refining each $p\in D$ to stronger conditions, $q\leq_\delta p$, such that $q$ is tenacious and there is $\vec H_q$, $q\forces_\delta\vec H=\vec H_q$. By shrinking, if necessary, we can assume that $\vec H_q$ is witnessing the tenacity of $q$ (the support mixed from the $\vec H_q$'s could be smaller than $\vec H$, but it is still a support nonetheless). Now if $q\forces_\delta\vec\pi\in\vec H_q$ and $p\in D$ such that $q\leq_\delta p$, then there is some $\vec\sigma$ such that $q\forces_\delta\vec\pi=\vec\sigma$ and $p\forces_\delta\vec\sigma\in\vec H$. Therefore $p\forces_\delta\gaut{\vec\sigma}\dot x=\dot x$, and by the tenacity of $q$ and \autoref{prop:preserving names} we get that $q\forces_\delta\gaut{\vec\pi}\dot x=\gaut{\vec\sigma}\dot x=\dot x$ as well; by induction and \autoref{cor:tenacious composition} this extends to every $\gaut{\vec\pi}\in\vec H_q(q)$.

In the other direction, as we can just mix all the supports to obtain $\vec H$, and from \autoref{prop:correctness} the result follows.
\end{proof}

\begin{proof}[Proof of \autoref{thm:iterated tenacity}]
We prove the theorem by induction on $\delta$. For $\delta=0$ there is nothing to prove, and for $\delta=1$ this is just the definition of tenacity. If $\delta$ is a limit ordinal, if $\bar p\in\PP_\delta$ then there is some $\alpha<\delta$ such that $\bar p\in\PP_\alpha$, and by the induction hypothesis there is some $p\leq_\alpha\bar p$ and an excellent $\cF_\alpha$-support $\vec H$ with the desired properties for $\alpha$; but $p\leq_\delta\bar p$ and extending $\vec H$ by adding $\dot\sG_\beta$ for $\beta\geq\alpha$ preserves the excellence of $\vec H$ and fixing $p$. So it remains to prove the statement for $\delta=\alpha+1$.

Let $\bar p$ be a condition in $\PP_\delta$, we will show that there exists $p\leq_\delta\bar p$ with the wanted properties. By the induction hypothesis there is some $p'\in\PP_\alpha$ such that $p'\leq_\alpha\bar p\restriction\alpha$, and $p'$ lies in the dense set guaranteed by the induction hypothesis for $\PP_\alpha$, with $\vec E$ an excellent $\cF_\alpha$-support witnessing that. By the assumption (2) we may also assume that there exists some $\dot q_\alpha$ which appears in $\dot\QQ_\alpha$ and $\dot H_\alpha$ which appears in $\dot\sF_\alpha$ such that:
\begin{enumerate}
\item $p'\forces_\alpha\dot q_\alpha\leq_{\dot\QQ_\alpha}\bar p(\alpha)$.
\item $p'\forces_\alpha\forall\dot\pi_\alpha\in\dot H_\alpha, \dot\pi_\alpha(\dot q_\alpha)=\dot q_\alpha$.
\item As both $\dot q_\alpha$ and $\dot H_\alpha$ appear in $\dot\QQ_\alpha$ and $\dot\sF_\alpha$ respectively, they are themselves $\cF_\alpha$-respected, and so we may assume that there is an excellent $\cF_\alpha$-support $\vec E'$ such that whenever $\gaut{\vec\pi}\in\vec E'(p')$, $p'\forces_\alpha\gaut{\vec\pi}\dot q_\alpha=\dot q_\alpha$ and $\gaut{\vec\pi}\dot H_\alpha=\dot H_\alpha$.
\item Moreover, as the intersection of two excellent supports is excellent, we can even assume that $\vec E=\vec E'$.
\end{enumerate}
We can do these things by using \autoref{lemma:pre-dense refinement} and \autoref{cor:pairing}. There is no circularity as the assumption is that $\PP_\alpha$ already satisfies the conclusion of the theorem, namely there is a dense subset as wanted.

Let $p$ be such that $p\restriction\alpha=p'$ and $p'\forces_\alpha p(\alpha)=\dot q_\alpha$, and whenever $r\in\PP_\alpha$ is incompatible with $p'$, $r\forces_\alpha p(\alpha)=1_{\dot\QQ_\alpha}$. And similarly extend $\dot H_\alpha$ to a name such that whenever $r$ is incompatible with $p'$, $r\forces_\alpha\dot H_\alpha=\dot\sG_\alpha$.

Now we claim that $\vec H=\vec E^\smallfrown\dot H_\alpha$ is an excellent support for $p$. First note that it is an excellent support since $\vec E$ was excellent and we only added a $\PP_\alpha$-name for the $\alpha$th group. Next, if $p\forces_\delta\vec\pi\in\vec H$, then $p'\forces\vec\pi\restriction\alpha\in\vec E$ and $p'\forces_\alpha\dot\pi_\alpha\in\dot H_\alpha$. Therefore
\[\gaut{\vec\pi}p=\gaut{\vec\pi\restriction\alpha} p'^\smallfrown\gaut{\vec\pi\restriction\alpha}\dot\pi_\alpha(\dot q_\alpha)=p'^\smallfrown\dot q,\]
but $p'\forces_\alpha\dot\pi_\alpha(\dot q_\alpha)=\dot q_\alpha$, and therefore $\gaut{\vec\pi\restriction\alpha}p'\forces_\alpha\gaut{\vec\pi\restriction\alpha}\dot\pi_\alpha(\dot q_\alpha)=\gaut{\vec\pi\restriction\alpha}\dot q_\alpha$.

Using the assumptions we have, this simplifies to $p'\forces_\alpha\dot q=\gaut{\vec\pi}\dot\pi_\alpha(\dot q_\alpha)=\dot q_\alpha$. So $p'\forces_\alpha\dot q=\dot q_\alpha$, and therefore $\gaut{\vec\pi}p=p$ as wanted.
\end{proof}

\begin{lemma}\label{lemma:pre-dense refinement}
Under the conclusion of \autoref{thm:iterated tenacity}, if $\dot x$ is $\cF_\delta$-respected, then we can find a pre-dense set $D$ that for every $p\in D$, $p$ is tenacious and $\vec H_p$ is an excellent support witnessing both this tenacity and that $\dot x$ is respected. Moreover, for any fixed $p\in\PP_\delta$ we can require that some $q\in D$ will satisfy $q\leq_\delta p$.
\end{lemma}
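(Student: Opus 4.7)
The plan is to combine, below every condition, an excellent approximation to the support witnessing respectedness of $\dot x$ with an excellent support witnessing tenacity, and then take their pointwise intersection as a single witness for both.

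First I would fix a support $\vec H$ and a pre-dense set $D_0$ demonstrating that $\dot x$ is $\cF_\delta$-respected. By the proposition immediately preceding this lemma, below every $p\in D_0$ there is, on a dense open set, some $p'\leq_\delta p$ together with an excellent support $\vec H_{p'}$ such that $p'\forces_\delta\vec H=\vec H_{p'}$. Next, applying the conclusion of \autoref{thm:iterated tenacity}, which is in force here by hypothesis, I can strengthen each such $p'$ to a tenacious $q\leq_\delta p'$ equipped with an excellent support $\vec K_q$ witnessing its tenacity. Let $D$ be the collection of all such $q$. Since $D_0$ is pre-dense and both subsequent refinements were carried out in dense open sets below its elements, $D$ is pre-dense; the ``moreover'' clause is immediate by running the same refinement starting from a common extension of the given condition and some compatible element of $D_0$.

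For each $q\in D$ I set $\vec H_q=\vec H_{p'}\cap\vec K_q$, which is an excellent support because the pointwise intersection of two excellent supports is again excellent. Tenacity is immediate: whenever $q\forces_\delta\vec\pi\in\vec H_q$ we have $q\forces_\delta\vec\pi\in\vec K_q$, so $\gaut{\vec\pi}q=q$, and \autoref{cor:tenacious composition} promotes this to every element of $\vec H_q(q)$. For respectedness, given $\vec\pi$ with $q\forces_\delta\vec\pi\in\vec H_q$, \autoref{prop:correctness} yields $\vec\sigma$ with $q\forces_\delta\vec\pi=\vec\sigma$ and $p'\forces_\delta\vec\sigma\in\vec H_{p'}$, hence $p'\forces_\delta\vec\sigma\in\vec H$. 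Thus $p'\forces_\delta\gaut{\vec\sigma}\dot x=\dot x$, and combining the tenacity of $q$ with \autoref{prop:preserving names} we conclude $q\forces_\delta\gaut{\vec\pi}\dot x=\gaut{\vec\sigma}\dot x=\dot x$; \autoref{cor:tenacious composition} lifts this to all of $\vec H_q(q)$.

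The observation making the strategy work is that both being tenacious and being respected are downward-monotone properties of the support, while excellence is stable under finite intersection, so intersecting the two excellent approximations loses nothing. The main delicate point, which I expect to be the real obstacle, is transferring the respectedness statement from its original witness $\vec H$ below $p\in D_0$ down to the excellent support $\vec H_q$ below the stronger tenacious $q$; this is precisely what \autoref{prop:correctness} delivers, once the tenacity of $q$ supplies the hypothesis $\gaut{\vec\pi}q=q$ required to invoke \autoref{prop:preserving names}.
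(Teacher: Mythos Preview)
Your approach is essentially the paper's: refine the pre-dense set witnessing respectedness to tenacious conditions, take the tenacity witness as a subsupport of (an excellent approximation to) the respectedness support, and verify via \autoref{prop:correctness} plus \autoref{prop:preserving names}. The paper skips your intermediate $p'$ layer by invoking the equivalent characterization of ``respected'' in terms of excellent supports directly, but this is cosmetic.

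There is one small slip in your verification. From $p'\forces_\delta\vec\sigma\in\vec H$ you write ``Thus $p'\forces_\delta\gaut{\vec\sigma}\dot x=\dot x$'', but the respectedness hypothesis only guarantees $p\forces_\delta\gaut{\vec\rho}\dot x=\dot x$ for $\gaut{\vec\rho}\in\vec H(p)$ with $p\in D_0$; your $p'$ is strictly below $p$, and $\vec H(p')$ may be larger than $\vec H(p)$. Pulling $\vec\sigma$ up from $p'$ to some $\vec\tau$ at $p$ via \autoref{prop:correctness} gives $p\forces_\delta\gaut{\vec\tau}\dot x=\dot x$, but then transferring this to $\gaut{\vec\sigma}$ at $p'$ via \autoref{prop:preserving names} would need $\gaut{\vec\sigma}p'=p'$, and $p'$ is not tenacious. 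The fix is to bypass $p'$ in the verification: since $q\leq_\delta p'$ and $p'\forces_\delta\vec H=\vec H_{p'}$, you already have $q\forces_\delta\vec\pi\in\vec H$; now apply \autoref{prop:correctness} directly with $q\leq_\delta p\in D_0$ to obtain $\vec\tau$ with $p\forces_\delta\vec\tau\in\vec H$ and $q\forces_\delta\vec\pi=\vec\tau$, whence $q\forces_\delta\gaut{\vec\tau}\dot x=\dot x$, and the tenacity of $q$ (not $p'$) supplies the hypothesis of \autoref{prop:preserving names}. This is exactly how the paper runs the argument.
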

\begin{proof}
As $\dot x$ is respected, there is some $D'$ pre-dense and $\vec H_q$ for $q\in D'$ witnessing this. Below each $q\in D'$, let $D_q$ be a maximal antichain of tenacious conditions and for every $p\in D_q$ take $\vec H_p$ to be a subsupport of $\vec H_q$ witnessing the tenacity of $p$.

Now we claim that $D=\bigcup_{q\in D'}D_q$ is a pre-dense set as wanted. If $p\in D$ and $p\forces_\delta\vec\pi\in\vec H_p$, then there is some $q\in D'$ such that $p\in D_q$, and by \autoref{prop:correctness} there is some $\vec\sigma$ such that $q\forces_\delta\vec\sigma\in\vec H_q$ with $p\forces_\delta\vec\pi=\vec\sigma$. Therefore $q\forces_\delta\gaut{\vec\sigma}\dot x=\dot x$. By the tenacity of $p$ and the fact that $p\forces_\delta\vec\sigma=\vec\pi$ we have that $p\forces_\delta\gaut{\vec\pi}\dot x=\gaut{\vec\sigma}\dot x=\dot x$. The same follows for every $\gaut{\vec\pi}\in\vec H_p(p)$ by induction.

For the last part of the lemma, given any $r\in D$ which is compatible with $p$ we can ensure the maximal antichain below $r$ lies below $p$ as well.
\end{proof}
\begin{corollary}\label{cor:pairing}
Under the conclusion of \autoref{thm:iterated tenacity}, if $\dot x$ and $\dot y$ are $\cF_\delta$-respected names then there exists pre-dense set $D$ witnessing this fact simultaneously.
\end{corollary}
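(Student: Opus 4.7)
The plan is to apply \autoref{lemma:pre-dense refinement} twice in succession and then intersect the supports coordinate-wise. First I would apply the lemma to $\dot x$ to obtain a pre-dense set $D_x$ such that each $p\in D_x$ is tenacious and carries an excellent support $\vec H_p^x$ witnessing simultaneously the tenacity of $p$ and the fact that $\dot x$ is $\cF_\delta$-respected. Then for each such $p$, I would invoke the ``moreover'' clause of the lemma applied to $\dot y$ and the condition $p$, producing a pre-dense (below $p$) collection of conditions $q\leq_\delta p$, each tenacious and each carrying an excellent support $\vec H_q^y$ witnessing both its tenacity and that $\dot y$ is $\cF_\delta$-respected. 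Collecting these $q$'s as $p$ ranges over $D_x$ gives a pre-dense set $D$ in $\PP_\delta$.

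For each $q\in D$, sitting below some $p\in D_x$, define the candidate support by pointwise intersection $\vec H_q=\vec H_p^x\cap\vec H_q^y$. This is still an excellent support: at each coordinate $\alpha$ it is a $\PP_\alpha$-name for the intersection of two groups in $\dot\sF_\alpha$, which is forced to lie in $\dot\sF_\alpha$ by the filter property, and $C(\vec H_q)\subseteq C(\vec H_p^x)\cup C(\vec H_q^y)$ is finite. Moreover $\vec H_q\subseteq\vec H_q^y$, so $\vec H_q$ continues to witness the tenacity of $q$.

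It remains to verify that $\vec H_q$ witnesses that both $\dot x$ and $\dot y$ are fixed below $q$. Suppose $q\forces_\delta\vec\pi\in\vec H_q$. Then $q\forces_\delta\vec\pi\in\vec H_q^y$, and by the defining property of $\vec H_q^y$ we get $q\forces_\delta\gaut{\vec\pi}\dot y=\dot y$; the same conclusion then extends to every $\gaut{\vec\pi}\in\vec H_q(q)$ by induction as in the proof of \autoref{lemma:pre-dense refinement}. For $\dot x$, we have $q\forces_\delta\vec\pi\in\vec H_p^x$, so by \autoref{prop:correctness} there exists $\vec\sigma$ with $p\forces_\delta\vec\sigma\in\vec H_p^x$ and $q\forces_\delta\vec\pi=\vec\sigma$. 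The support $\vec H_p^x$ already gives $p\forces_\delta\gaut{\vec\sigma}\dot x=\dot x$, hence $q\forces_\delta\gaut{\vec\sigma}\dot x=\dot x$; combining $q\forces_\delta\vec\pi=\vec\sigma$ with the tenacity $\gaut{\vec\pi}q=q$ secured by $\vec H_q$, \autoref{prop:preserving names} yields $q\forces_\delta\gaut{\vec\pi}\dot x=\gaut{\vec\sigma}\dot x=\dot x$.

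The only delicate point I foresee is precisely this interplay between the two supports at the step where we transfer information from $p$ down to $q$: naively, $\vec H_p^x$ is only guaranteed to fix $\dot x$ for automorphism sequences known to lie in $\vec H_p^x$ below $p$, whereas $\vec\pi$ is a priori only known to be in $\vec H_p^x$ below $q$. Passing to $\vec\sigma$ via \autoref{prop:correctness} and invoking tenacity plus \autoref{prop:preserving names} is exactly the mechanism that bridges this gap, and this is the same trick used in the proof of \autoref{lemma:pre-dense refinement}, so no new machinery is required.
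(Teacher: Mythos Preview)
Your proof is correct and follows essentially the same approach as the paper: refine to tenacious conditions lying below witnesses for both $\dot x$ and $\dot y$, take the coordinate-wise intersection of the two excellent supports, and then use \autoref{prop:correctness} together with tenacity and \autoref{prop:preserving names} to transfer the fixing of $\dot x$ from $p$ down to $q$. The paper phrases this as passing to a common refinement of $D_x$ and $D_y$ rather than refining sequentially, but the mechanism is identical; your explicit identification of the ``delicate point'' is exactly the content of the paper's appeal to ``as in the proof of the lemma above.''
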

\begin{proof}[Sketch of Proof]
Let $\vec H_x$, $\vec H_y$, $D_x$ and $D_y$ be the supports and pre-dense sets witnessing $\dot x$ and $\dot y$ are $\cF_\delta$-respected, respectively.  Let $D$ be a common refinement of $D_x$ and $D_y$, we may assume that each $p\in D$ is tenacious with witness $\vec H_p$ and we may assume that $\vec H_p\subseteq\vec H_{x,q}\cap\vec H_{y,q'}$ for some $q\in D_x$ and $q'\in D_y$. Then as in the proof of the lemma above, for all $\gaut{\vec\pi}\in\vec H_p(p)$ it holds that $p\forces_\delta\gaut{\vec\pi}\dot x=\dot x$ and $\gaut{\vec\pi}\dot y=\dot y$.
\end{proof}

So what does tenacity give us? Well, despite that it is easy to prove that if $p\forces_\delta\vec\pi=\vec\sigma$ implies that $\gaut{\vec\pi}p=\gaut{\vec\sigma}p$, we could only prove that $p$ itself also forces that $\gaut{\vec\pi}\dot x=\gaut{\vec\sigma}\dot x$ under the assumption that $\gaut{\vec\pi}p=p$. If $p$ is tenacious we can assume that we work with automorphisms which are in the support witnessing that, and not only that, since the witness for the tenacity of $p$ is an excellent support, we can ``pull'' any automorphisms from any extension of $p$ to something that $p$ already forces to be in its tenacity witness. Combining this with the lemma above we get that we can always make the assumption that if $\dot x$ is respected, then there is---generically---a large group which respects it.

\section{Iterated symmetric extensions}\label{section:IS}
\begin{definition}
A \textit{symmetric iteration} (of length $\delta$) is $\tup{\dot\QQ_\alpha,\dot\sG_\alpha,\dot\sF_\alpha\mid\alpha<\delta}$ and $\tup{\PP_\alpha,\cG_\alpha,\cF_\alpha\mid\alpha\leq\delta}$ if the following holds
\begin{enumerate}
\item $\PP_\alpha$ is the finite support iteration of $\dot\QQ_\beta$ for $\beta<\alpha$, $\cG_\alpha$ is the generic semi-direct product of $\dot\sG_\beta$ for $\beta<\alpha$ and $\cF_\alpha$ is the system of supports generated by $\dot\sF_\beta$ for $\beta<\alpha$.
\item For every $\alpha$, $1_\alpha\forces_\alpha$``$\tup{\dot\QQ_\alpha,\dot\sG_\alpha,\dot\sF_\alpha}^\bullet$ is a tenacious system''.\footnote{It might be the case that $\sF_\alpha$ is not a filter in the generic extension, only in the ``guessed'' intermediate extension. This can be fixed after defining the $\forces^\IS$ relation, and by noticing that in either case it is a normal filter base.}
\item For every $\alpha$, the names $\dot\QQ_\alpha,\dot\sG_\alpha$ and $\dot\sF_\alpha$ are hereditarily $\cF_\alpha$-respected.
\item For every $\alpha$, the names $\dot\QQ_\alpha,\dot\sG_\alpha$ and $\dot\sF_\alpha$ are respected by every automorphism in $\cG_\alpha$.
\end{enumerate}
We denote by $\IS_\alpha$, for $\alpha\leq\delta$, the class of $\PP_\alpha$-names which are hereditarily $\cF_\alpha$-respected.
\end{definition}
The $\IS_\delta$ class will be interpreted as our intermediate model. The idea is that $\IS_1$ will be interpreted as the symmetric extension obtained by the first step; the second step $\tup{\dot\QQ_1,\dot\sG_1,\dot\sF_1}$ lies in $\IS_1$, so it is a valid candidate for a symmetric system for the second step, and so on. At limit steps, $\IS_\delta$ is going to be something resembling to ``definable from finitely many elements from $\bigcup_{\alpha<\delta}\IS_\alpha$''.

A natural question to ask now is why did we take the effort to even define $\IS_\alpha$, rather than just talk about $\cF_\alpha$-respected names. The reason is that we do not require that the symmetric systems in our iterations are hereditarily respected at each step of the iteration. So at least in theory it is conceivable that we can find an iteration where each iterand is respected, but not hereditarily respected, and this would be akin to forcing from outside the model. While there might be some interest in such approach, it has inherent difficulties and it does not faithfully reflect what we would expect from an iteration of symmetric extensions.
\smallskip

For the rest of the section we will work in the context of a symmetric iteration of length $\delta$.
\subsection{Properties of \texorpdfstring{$\IS$}{IS} names}
\begin{theorem}\label{thm:IS-names are closed under mixing}
Let $\dot x$ be a $\PP_\delta$-name and suppose that $D\subseteq\PP_\delta$ is a maximal antichain such that for every $p\in D$ there exists $\dot x_p\in\IS_\delta$ such that $p\forces_\delta\dot x=\dot x_p$. Then $\dot x\in\IS_\delta$.
\end{theorem}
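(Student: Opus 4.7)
The strategy is to replace $\dot x$ by an explicit \emph{mixed} name $\dot x^*$ whose membership of $\IS_\delta$ is manifest, and then argue that this $\dot x^*$ is forced to be equal to $\dot x$ (so that, up to the forcing-equivalence identification used throughout the paper, $\dot x \in \IS_\delta$). Concretely, I would define
\[
\dot x^* = \{\tup{q,\dot y}\mid \exists p\in D,\ q\leq_\delta p,\ \dot y\text{ appears in }\dot x_p,\text{ and }q\forces_\delta \dot y\in\dot x_p\}.
\]
The equality $1_\delta\forces_\delta \dot x=\dot x^*$ is then a routine density argument: below any $p\in D$ we have $\dot x=\dot x_p$ forced, so any $r\leq_\delta p$ that forces $\dot y\in\dot x^*$ must actually witness this via a pair $\tup{s,\dot z}\in \dot x^*$ coming from the same $p$ (because elements of the antichain $D$ are pairwise incompatible), whence $r\forces_\delta \dot y\in\dot x_p$; the converse direction is immediate from the definition of $\dot x^*$.

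Hereditary respectedness of $\dot x^*$ then has two halves. For the hereditary part: every $\dot y$ appearing in $\dot x^*$ already appears in some $\dot x_p\in\IS_\delta$, hence is hereditarily $\cF_\delta$-respected by definition of $\IS_\delta$. For respectedness of $\dot x^*$ itself, I would use \autoref{lemma:pre-dense refinement} and \autoref{cor:pairing} applied to each $\dot x_p$ to obtain, below every $p\in D$, a pre-dense set $D_p$ of tenacious conditions, each $q\in D_p$ equipped with an excellent support $\vec H_q$ witnessing simultaneously the tenacity of $q$ and the respectedness of $\dot x_p$. The union $D^*=\bigcup_{p\in D} D_p$ is pre-dense in $\PP_\delta$.

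The computation that $D^*$ (with the supports $\vec H_q$) witnesses that $\dot x^*$ is $\cF_\delta$-respected is where all the machinery of Section~\ref{section:ext filters} comes together, but it is now short. Fix $q\in D_p\subseteq D^*$ and any $\vec\pi$ with $q\forces_\delta \vec\pi\in\vec H_q$. Then $q\forces_\delta \dot x^*=\dot x_p$ and, by the choice of $\vec H_q$, $q\forces_\delta \gaut{\vec\pi}\dot x_p=\dot x_p$. Because $\vec H_q$ also witnesses the tenacity of $q$, we have $\gaut{\vec\pi}q=q$, so applying $\gaut{\vec\pi}$ to the forced equality $\dot x^*=\dot x_p$ via the Symmetry Lemma gives $q\forces_\delta \gaut{\vec\pi}\dot x^*=\gaut{\vec\pi}\dot x_p$. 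Chaining these three equalities yields $q\forces_\delta \gaut{\vec\pi}\dot x^*=\dot x^*$, and the conclusion extends to all $\gaut{\vec\pi}\in\vec H_q(q)$ by \autoref{cor:tenacious composition} and induction, just as in the proof of the equivalent definition of ``respected'' that precedes \autoref{thm:iterated tenacity}.

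The only real obstacle I expect is conceptual rather than technical: one has to be comfortable with the fact that $\dot x$ need not \emph{literally} have all its subnames in $\IS_\delta$, so the honest content of the theorem is that some name forced equal to $\dot x$ lies in $\IS_\delta$. Once one passes to $\dot x^*$, the hereditary clause is automatic and the respectedness clause is just the interplay of tenacity and the Symmetry Lemma. No additional genericity assumption on $D$ beyond maximality is needed, and the proof does not require recursion on the rank of $\dot x$; the mixing is carried out only at the top level.
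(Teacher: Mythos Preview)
Your argument is correct and the key computation --- using tenacity plus the Symmetry Lemma to chain $q\forces_\delta\gaut{\vec\pi}\dot x^*=\gaut{\vec\pi}\dot x_p=\dot x_p=\dot x^*$ --- is exactly what the paper does. The only real difference is in how the hereditary clause is handled: the paper works directly with $\dot x$ and runs an induction on rank (each $\dot y$ appearing in $\dot x$ is, below conditions in $D$, forced into some $\dot x_p$ and hence is itself a mixing of $\IS_\delta$-names of lower rank), whereas you replace $\dot x$ by the explicit mix $\dot x^*$ so that every name appearing in it already lives in some $\dot x_p\in\IS_\delta$, making the hereditary part automatic and the rank induction unnecessary. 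Your route is slightly more explicit about the fact that the conclusion is really ``$\dot x$ is forced equal to a name in $\IS_\delta$'', which the paper leaves implicit; the paper's route is a bit terser. Both refine $D$ in the same way (to tenacious conditions with excellent supports witnessing both tenacity and the respectedness of the relevant $\dot x_p$), and both rely on the same machinery from \autoref{lemma:pre-dense refinement}.
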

\begin{proof}
We prove this by induction on the rank of $\dot x$; it is enough to prove that $\dot x$ is $\cF_\delta$-respected as the induction hypothesis immediately guarantees in this case that it is going to be in $\IS_\delta$. We can assume without loss of generality that each $p\in D$ satisfies the following properties:
\begin{enumerate}
\item $p$ lies in the pre-dense open set witnessing the fact $\dot x_p$ is $\cF_\delta$-respected with $\vec H_p$ as a support, and
\item $p$ is tenacious, and $\vec H_p$ is an excellent support witnessing this as well.
\end{enumerate}
If these conditions fail to hold, we can refine $D$ by the definition of symmetric iterations.

We claim that $D$ witnesses that $\dot x\in\IS_\delta$. If $p\in D$ and $\gaut{\vec\pi}\in\vec H_p(p)$, then \[p\forces_\delta\gaut{\vec\pi}\dot x=\gaut{\vec\pi}\dot x_p=\dot x_p=\dot x\] (the first equality comes from the fact that $\gaut{\vec\pi}p=p$ and the Symmetry Lemma).
\end{proof}
We get from the above theorem that if we treat an automorphism sequence $\vec\pi$ as a name of a sequence (namely, $\{\tup{\check\alpha,\dot\pi_\alpha}^\bullet\mid\alpha<\delta\}^\bullet$) then $\vec\pi\in\IS_\delta$, and similarly this is the case for any support $\vec H$.

The following proposition follows immediately from \autoref{cor:pairing}
\begin{proposition}\label{prop:pairing}
If $\dot x,\dot y\in\IS_\delta$, then $\{\dot x,\dot y\}^\bullet\in\IS_\delta$.\qed
\end{proposition}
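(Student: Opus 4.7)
The plan is to unwind the definitions and invoke \autoref{cor:pairing} directly. Recall that $\{\dot x,\dot y\}^\bullet$ is by definition $\{\tup{1_\delta,\dot x},\tup{1_\delta,\dot y}\}$, so the only names that appear in it are $\dot x$ and $\dot y$ themselves. Since both are assumed to be hereditarily $\cF_\delta$-respected, the hereditary clause of the definition will be automatic once we check that $\{\dot x,\dot y\}^\bullet$ is itself $\cF_\delta$-respected.

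For the respected part, I would apply \autoref{cor:pairing} to obtain a single pre-dense set $D$ with the property that every $p\in D$ is tenacious and carries an excellent support $\vec H_p$ such that whenever $\gaut{\vec\pi}\in\vec H_p(p)$ we have $p\forces_\delta\gaut{\vec\pi}\dot x=\dot x$ and $p\forces_\delta\gaut{\vec\pi}\dot y=\dot y$ simultaneously. I claim the same $D$ and the same supports $\vec H_p$ witness that $\{\dot x,\dot y\}^\bullet$ is $\cF_\delta$-respected. Indeed, since $\gaut{\vec\pi}$ fixes $1_\delta$, a direct computation gives
\[
\gaut{\vec\pi}\{\dot x,\dot y\}^\bullet=\{\tup{1_\delta,\gaut{\vec\pi}\dot x},\tup{1_\delta,\gaut{\vec\pi}\dot y}\}=\{\gaut{\vec\pi}\dot x,\gaut{\vec\pi}\dot y\}^\bullet,
\]
so $p\forces_\delta\gaut{\vec\pi}\{\dot x,\dot y\}^\bullet=\{\dot x,\dot y\}^\bullet$.

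There is essentially no obstacle here beyond having the simultaneous pre-dense set; this is exactly the content of \autoref{cor:pairing}, which is why the proposition follows immediately. The only small point worth flagging is that one has to use the \emph{same} support on both names at each $p\in D$, since otherwise the two equalities would be witnessed on possibly different pre-dense refinements and one would need to refine once more to merge them—again a routine task handled by \autoref{cor:pairing}.
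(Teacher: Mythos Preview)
Your proposal is correct and is exactly the approach the paper takes: the paper simply states that the proposition follows immediately from \autoref{cor:pairing} and gives no further details. You have spelled out precisely why---the hereditary clause is automatic because only $\dot x$ and $\dot y$ appear in $\{\dot x,\dot y\}^\bullet$, and the simultaneous pre-dense set from \autoref{cor:pairing} together with the computation $\gaut{\vec\pi}\{\dot x,\dot y\}^\bullet=\{\gaut{\vec\pi}\dot x,\gaut{\vec\pi}\dot y\}^\bullet$ handles the respected part.
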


\begin{lemma}
Suppose that $\vec\pi$ is an automorphism sequence, then there is a dense set of tenacious conditions $p$, such that $\gaut{\vec\pi}p$ is also tenacious.
\end{lemma}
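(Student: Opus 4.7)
The plan is to fix an arbitrary $\bar p\in\PP_\delta$ and to produce a tenacious $p\leq_\delta\bar p$ for which $\gaut{\vec\pi}p$ is also tenacious. First, I would apply \autoref{thm:iterated tenacity} to the condition $\gaut{\vec\pi}\bar p$ to obtain a tenacious $q\leq_\delta\gaut{\vec\pi}\bar p$ together with an excellent witness $\vec K$. Then I would set $p:=\gaut{\vec\pi^{-1}}q$; since $\gaut{\vec\pi^{-1}}$ is an order automorphism of $\PP_\delta$, we have $p\leq_\delta\gaut{\vec\pi^{-1}}\gaut{\vec\pi}\bar p=\bar p$, and by construction $\gaut{\vec\pi}p=q$ is tenacious.

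It remains to exhibit a witness of tenacity for $p$ itself. The key observation is that tenacity ought to transfer through automorphisms: for every $\vec\sigma$ with $q\forces_\delta\vec\sigma\in\vec K$, the conjugate $\gaut{\vec\pi^{-1}\circ\vec\sigma\circ\vec\pi}=\gaut{\vec\pi^{-1}}\gaut{\vec\sigma}\gaut{\vec\pi}$ fixes $p$, since
\[\gaut{\vec\pi^{-1}}\gaut{\vec\sigma}\gaut{\vec\pi}p = \gaut{\vec\pi^{-1}}\gaut{\vec\sigma}q = \gaut{\vec\pi^{-1}}q = p,\]
using $\gaut{\vec\sigma}q=q$. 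Accordingly, the natural candidate for a tenacity witness $\vec H$ is the ``conjugate support'' $\vec\pi^{-1}\circ\vec K\circ\vec\pi$, defined coordinate-wise via the composition formulas of \autoref{prop:aut seq calculations}.

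The hard part, as flagged in the discussion opening \autoref{section:ext filters}, is ensuring that this conjugate is in fact an excellent support: a sequence $\tup{\dot H_\alpha\mid\alpha<\delta}$ with each $\dot H_\alpha$ a $\PP_\alpha$-name of a subgroup in $\dot\sF_\alpha$ and with $C(\vec H)$ finite. Finiteness is automatic because $C(\vec H)\subseteq C(\vec\pi)\cup C(\vec K)$, and both sets are finite. Membership in $\dot\sF_\alpha$ at each nontrivial coordinate $\alpha$ should come from combining the normality of $\dot\sF_\alpha$ with the assumption that $\dot\sF_\alpha$ is respected by every automorphism in $\cG_\alpha$, which is built into the definition of a symmetric iteration. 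The conjugation formula tells us precisely what the pointwise ingredients are---images of $\dot K_\alpha$ under the $\gaut{\vec\pi}$-action together with conjugation inside $\dot\sG_\alpha$---and both operations send elements of $\dot\sF_\alpha$ back into $\dot\sF_\alpha$. Once $\vec H$ is verified to be excellent, the displayed calculation above shows that $p$ is tenacious with witness $\vec H$, completing the proof.
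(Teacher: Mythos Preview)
Your reduction---produce a tenacious $q\leq_\delta\gaut{\vec\pi}\bar p$ and pull back via $p=\gaut{\vec\pi^{-1}}q$---is fine up to the point where you must show that $p$ itself is tenacious. The gap is in your description of the conjugate support. You write that the $\alpha$-th ingredient is ``images of $\dot K_\alpha$ under the $\gaut{\vec\pi}$-action together with conjugation inside $\dot\sG_\alpha$'', but equation~\eqref{eqn:conjugation} says
\[
\vec\pi^{-1}\circ\vec\sigma\circ\vec\pi
   \;=\; \gaut{\vec\pi^{-1}}\bigl\langle(\gaut{\vec\sigma^{-1}}(\vec\pi^{-1})_\alpha)\,\dot\sigma_\alpha\,((\vec\pi^{-1})_\alpha)^{-1}\bigm|\alpha<\delta\bigr\rangle,
\]
and the factor $\gaut{\vec\sigma^{-1}}(\vec\pi^{-1})_\alpha$ depends on the \emph{entire} sequence $\vec\sigma$, not merely on $\dot\sigma_\alpha$. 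So the set of $\alpha$-th coordinates you obtain as $\vec\sigma$ ranges over $\vec K$ is not of the form ``a fixed group in $\dot\sF_\alpha$''; it is not a support at all in the sense of the definition. Normality of $\dot\sF_\alpha$ and the fact that $\cG_\alpha$ respects $\dot\sF_\alpha$ do not help here, because you are not conjugating $\dot K_\alpha$ by a single fixed element.

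The missing idea, which the paper supplies, is to arrange in advance that the tenacity witness also \emph{respects $\vec\pi$}: since $\vec\pi\in\IS_\delta$, one may shrink the support so that $p\forces_\delta\gaut{\vec\sigma}\dot\pi_\alpha=\dot\pi_\alpha$ for every $\vec\sigma$ in it. With this in hand the offending factor collapses, $p\forces_\delta\gaut{\vec\sigma^{-1}}\dot\pi_\alpha=\dot\pi_\alpha$, and the conjugate genuinely becomes a coordinate-wise conjugation $\gaut{\vec\pi}\langle\dot\pi_\alpha\dot H_\alpha\dot\pi_\alpha^{-1}\mid\alpha<\delta\rangle$, which \emph{is} an excellent support by normality and respect. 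The paper runs the argument in the forward direction (start with tenacious $p$ below a condition where $\vec\pi$ is respected, then show $\gaut{\vec\pi}p$ is tenacious), but your reverse direction would work too once you insert this step: when choosing $q$, require that its witness $\vec K$ also witnesses that $\gaut{\vec\pi}\vec\pi^{-1}\in\IS_\delta$ is respected at $q$.
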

\begin{proof}
Fix $D$ to be a pre-dense set witnessing that $\vec\pi\in\IS_\delta$. We may assume, using \autoref{lemma:pre-dense refinement} that every $q\in D$ is tenacious and $\vec H_q$ is a witness for the tenacity of $q$ and for the fact that $\vec\pi\in\IS_\delta$.

Now we claim that if $p\leq_\delta q$ for a tenacious $p$ and $q\in D$, then $\gaut{\vec\pi}p$ is also tenacious. First observe that if $\gaut{\vec\sigma}p=p$, then $\gaut{\vec\pi}\gaut{\vec\sigma}\gaut{\vec\pi^{-1}}\left(\gaut{\vec\pi}p\right)=\gaut{\vec\pi}p$. So the first attempt would be the conjugate $\vec H_p$ by $\gaut{\vec\pi}$ in order to obtain a support for $\gaut{\vec\pi}p$.

However, recall the equation \ref{eqn:conjugation} at the beginning of the previous section:
\[\vec\pi\circ\vec\sigma\circ\vec\pi^{-1}=\gaut{\vec\pi}\tup{(\gaut{\vec\sigma^{-1}}\dot\pi_\alpha)\dot\sigma_\alpha\dot\pi_\alpha^{-1}\mid\alpha<\delta}.\]
So conjugation is not going to be enough. We also want to get rid of that $\gaut{\vec\sigma^{-1}}$, and to do so we claim that $\vec E$ defined by $\gaut{\vec\pi}\tup{\dot\pi_\alpha(\dot H_{p,\alpha}\cap H_{q,\alpha})\dot\pi_\alpha^{-1}\mid\alpha<\delta}$ is going to witness the tenacity of $\gaut{\vec\pi}p$.

First we observe that the intersection of two excellent supports is an excellent support, each $\dot\sF_\alpha$ is guaranteed to be a normal filter, and it is respected by $\gaut{\vec\pi}$. So $\vec E$ is indeed an excellent support. Now suppose that $\gaut{\vec\pi}p\forces_\delta\vec\tau\in\vec E$, we claim that $\gaut{\vec\tau}\gaut{\vec\pi}p=\gaut{\vec\pi}p$. 

Apply $\gaut{\vec\pi^{-1}}$ to $\gaut{\vec\pi}p\forces_\delta\vec\tau\in\vec E$, by the \nameref{lemma:symmetry} and considering one coordinate, we get that $p\forces_\delta\gaut{\vec\pi^{-1}}\dot\tau_\alpha=\dot\pi_\alpha\dot\sigma_\alpha\dot\pi_\alpha^{-1}$, where $\dot\sigma_\alpha$ is some automorphism such that $p\forces_\delta\dot\sigma_\alpha\in\dot H_{p,\alpha}\cap\dot H_{q,\alpha}$.

It follows that there is some $\vec\sigma'$ such that $q\forces_\delta\vec\sigma'\in\vec H_q$ and $p\forces_\delta\vec\sigma=\vec\sigma'$. Therefore $\gaut{\vec\sigma}p=p$ and $p\forces\gaut{\vec\sigma}\dot\pi_\alpha=\gaut{\vec\sigma'}\dot\pi_\alpha=\dot\pi_\alpha$. This means that $p\forces_\delta\gaut{\vec\pi^{-1}}\vec\tau=\gaut{\vec\pi^{-1}}(\vec\pi\circ\vec\sigma\circ\vec\pi^{-1})$. Therefore, $\gaut{\vec\pi}p\forces_\delta\vec\tau=\vec\pi\circ\vec\sigma\circ\vec\pi^{-1}$, which gives us
\[\gaut{\vec\tau}\gaut{\vec\pi}p=\gaut{\vec\pi}\gaut{\vec\sigma}\gaut{\vec\pi^{-1}}\gaut{\vec\pi}p=\gaut{\vec\pi}p.\qedhere\]
\end{proof}
This idea is essentially telling us that the supports are ``generically normal''. We may have to shrink something down, or extend some condition, but on a dense set things work out. And this brings us to the following important theorem.
\begin{theorem}\label{thm:IS-names are closed under aut}
If $\gaut{\vec\pi}\in\cG_\delta$ and $\dot x\in\IS_\delta$, then $\gaut{\vec\pi}\dot x\in\IS_\delta$.
\end{theorem}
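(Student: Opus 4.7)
The plan is to proceed by induction on the rank of $\dot x$. Since
\[\gaut{\vec\pi}\dot x=\{\tup{\gaut{\vec\pi}q,\gaut{\vec\pi}\dot y}\mid\tup{q,\dot y}\in\dot x\},\]
every name appearing in $\gaut{\vec\pi}\dot x$ is of the form $\gaut{\vec\pi}\dot y$ for some $\dot y\in\IS_\delta$ of strictly smaller rank, and the induction hypothesis places all such names in $\IS_\delta$. It therefore suffices to produce a pre-dense set and excellent supports witnessing that $\gaut{\vec\pi}\dot x$ is $\cF_\delta$-respected.

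The construction is dictated by the preceding lemma on conjugation. Recall that $\vec\pi$ itself lies in $\IS_\delta$ (by the remark following \autoref{thm:IS-names are closed under mixing}), so it has a pre-dense witness $D_\pi$. Combining this with a pre-dense witness $D_x$ for $\dot x$ and invoking \autoref{lemma:pre-dense refinement} and \autoref{cor:pairing}, we may refine to a common pre-dense set $D$ of tenacious conditions $p$, each with an excellent support $\vec H_p$ that simultaneously witnesses the tenacity of $p$ and the symmetry of $\dot x$, and is contained pointwise in the support $\vec H_q$ associated to some $q\in D_\pi$ with $p\leq_\delta q$. The preceding lemma then produces the excellent support
\[\vec E_p=\gaut{\vec\pi}\tup{\dot\pi_\alpha\dot H_{p,\alpha}\dot\pi_\alpha^{-1}\mid\alpha<\delta}\]
witnessing the tenacity of $\gaut{\vec\pi}p$. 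Since $\gaut{\vec\pi}$ is an automorphism of $\PP_\delta$, the set $D'=\{\gaut{\vec\pi}p\mid p\in D\}$ is pre-dense, and we claim that $D'$, equipped with the supports $\vec E_p$, witnesses that $\gaut{\vec\pi}\dot x$ is respected.

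To verify the claim, fix $p\in D$ and suppose $\gaut{\vec\pi}p\forces_\delta\vec\tau\in\vec E_p$. The core calculation in the proof of the preceding lemma already yields (i) $\gaut{\vec\tau}\gaut{\vec\pi}p=\gaut{\vec\pi}p$, and (ii) a sequence $\vec\sigma$ with $p\forces_\delta\vec\sigma\in\vec H_p$ such that $\gaut{\vec\pi}p\forces_\delta\vec\tau=\vec\pi\circ\vec\sigma\circ\vec\pi^{-1}$. Applying \autoref{prop:preserving names} at $\gaut{\vec\pi}p$ to the name $\gaut{\vec\pi}\dot x$ and unwinding the generic semi-direct product, we obtain
\[\gaut{\vec\pi}p\forces_\delta\gaut{\vec\tau}\gaut{\vec\pi}\dot x=\gaut{\vec\pi}\gaut{\vec\sigma}\gaut{\vec\pi^{-1}}\gaut{\vec\pi}\dot x=\gaut{\vec\pi}\gaut{\vec\sigma}\dot x,\]
and the symmetry of $\dot x$ under $\vec H_p$ together with the \autoref{lemma:symmetry} lifts $p\forces_\delta\gaut{\vec\sigma}\dot x=\dot x$ to $\gaut{\vec\pi}p\forces_\delta\gaut{\vec\pi}\gaut{\vec\sigma}\dot x=\gaut{\vec\pi}\dot x$, closing the calculation. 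The main obstacle is precisely the non-pointwise behaviour of generic conjugation captured by equation \eqref{eqn:conjugation}: one cannot simply conjugate $\vec H_p$ coordinatewise to obtain a support for $\gaut{\vec\pi}\dot x$, and this is exactly what the construction of $\vec E_p$ in the preceding lemma was designed to bypass; once $\vec E_p$ is in hand, the remainder is bookkeeping with \autoref{prop:preserving names} and the Symmetry Lemma.
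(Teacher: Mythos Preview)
Your argument is correct and follows the paper's proof essentially verbatim: the same induction on rank, the same common pre-dense set $D$ witnessing both $\dot x$ and $\vec\pi$, the same conjugated support $\vec E_p=\gaut{\vec\pi}\tup{\dot\pi_\alpha\dot H_{p,\alpha}\dot\pi_\alpha^{-1}\mid\alpha<\delta}$, and the same appeal to the preceding lemma's computation to reduce $\gaut{\vec\tau}$ to a conjugate $\gaut{\vec\pi}\gaut{\vec\sigma}\gaut{\vec\pi^{-1}}$. The only point left implicit is the final passage from generators $\vec\tau$ with $\gaut{\vec\pi}p\forces_\delta\vec\tau\in\vec E_p$ to arbitrary elements of the generated group $\vec E_p(\gaut{\vec\pi}p)$, which the paper dispatches in one line by induction on word length using \autoref{cor:tenacious composition}.
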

\begin{proof}
We prove this by induction on $\rank(\dot x)$. It suffices to prove that $\gaut{\vec\pi}\dot x$ is respected, as the induction hypothesis ensures that every $\dot y$ which appears in $\dot x$ will satisfy that $\gaut{\vec\pi}\dot y\in\IS_\delta$ also. Let $D$ be a pre-dense set witnessing that $\dot x,\vec\pi\in\IS_\delta$.

We may also assume that $D$ is a pre-dense set such that whenever $p\in D$, both $p$ and $\gaut{\vec\pi}p$ are tenacious, and that $\vec H_p$ is witnessing both the tenacity of $p$, as well the fact that $\dot x,\vec\pi\in\IS_\delta$. We work below $p$, and follow the same path as the previous proof: define $\vec E_p=\gaut{\vec\pi}\tup{\dot\pi_\alpha\dot H_{p,\alpha}\dot\pi^{-1}_\alpha\mid\alpha<\delta}$, and we claim that $D'=\{\gaut{\vec\pi}p\mid p\in D\}$ is a pre-dense set witnessing that $\gaut{\vec\pi}\dot x\in\IS_\delta$ with $\vec E_p$ as the support for $\gaut{\vec\pi}p$.

Suppose that $\gaut{\vec\pi}p\in D'$, and $\gaut{\vec\pi}p\forces_\delta\vec\tau\in\vec E_p$, then from the proof of the lemma we get that $p\forces_\delta\gaut{\vec\pi^{-1}}\vec\tau=\gaut{\vec\pi^{-1}}\vec\pi\circ\vec\sigma\circ\vec\pi^{-1}$, where $\vec\sigma$ is an automorphism sequence such that $p\forces_\delta\vec\sigma\in\vec H_p$ and therefore $p\forces_\delta\gaut{\vec\sigma}\dot x=\dot x$. It follows that $\gaut{\vec\pi}p\forces_\delta\vec\tau=\vec\pi\circ\vec\sigma\circ\vec\pi^{-1}$, and that $\gaut{\vec\tau}\gaut{\vec\pi}p=\gaut{\vec\pi}p$. Therefore $\gaut{\vec\pi}p\forces_\delta\gaut{\vec\tau}\gaut{\vec\pi}\dot x=\gaut{\vec\pi}\gaut{\vec\sigma}\gaut{\vec\pi^{-1}}\gaut{\vec\pi}\dot x$. As before, it follows that $\gaut{\vec\pi}p\forces\gaut{\vec\tau}\gaut{\vec\pi}\dot x=\gaut{\vec\pi}\dot x$.

Now proceed by induction to conclude the same for every $\gaut{\vec\tau}\in\vec E_p(\gaut{\vec\pi}p)$.
\end{proof}
\begin{theorem}\label{thm:ZF models}
Suppose that $G\subseteq\PP_\delta$ is a $V$-generic filter. Then the class $M$ defined as $\IS_\delta^G=\{\dot x^G\mid\dot x\in\IS_\delta\}$ satisfies $\ZF$ and $V\subseteq M\subseteq V[G]$. Moreover, for every $\alpha<\delta,$ $\IS_\alpha^{G\restriction\alpha}\subseteq M$.
\end{theorem}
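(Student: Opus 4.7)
The plan is to verify each axiom of $\ZF$ in $M$, using the closure properties of $\IS_\delta$ established earlier (mixing in \autoref{thm:IS-names are closed under mixing}, pairing in \autoref{prop:pairing}, and automorphism-invariance in \autoref{thm:IS-names are closed under aut}), together with the standard machinery for symmetric extensions.

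First I would handle the containment statements. For $V\subseteq M$, I show by induction on rank that the canonical name $\check x=\{\check y\mid y\in x\}^\bullet$ lies in $\IS_\delta$: every $\gaut{\vec\pi}\in\cG_\delta$ fixes each $\check y$ pointwise, so $1_\delta$ forces $\gaut{\vec\pi}\check x=\check x$, and any singleton support witnesses respectedness. The containment $M\subseteq V[G]$ is immediate from the definition. For $\IS_\alpha^{G\restriction\alpha}\subseteq M$, I note that a $\PP_\alpha$-name is also a $\PP_\delta$-name (with the same interpretation under $G$ and $G\restriction\alpha$, since no conditions past stage $\alpha$ are mentioned), and any $\cF_\alpha$-support $\vec H=\tup{\dot H_\beta\mid\beta<\alpha}$ extends to a $\cF_\delta$-support $\vec H'$ by padding with $\dot\sG_\beta$ for $\alpha\leq\beta<\delta$. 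Since the padded coordinates act trivially on a $\PP_\alpha$-name, $\vec H'$ witnesses respectedness in $\cF_\delta$, and the hereditary property is preserved by induction.

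Next, the axioms of $\ZF$. Extensionality, Foundation, and Infinity are inherited: $M$ is transitive (the interpretation of a hereditarily respected name is built from interpretations of names of lower rank which are again in $\IS_\delta$) and $\omega\in V\subseteq M$. Pairing is \autoref{prop:pairing}. For Union, given $\dot x\in\IS_\delta$ I use the name $\bigcup^\bullet\dot x=\{\tup{p\wedge q,\dot z}\mid \exists\dot y\ (\tup{p,\dot y}\in\dot x,\ \tup{q,\dot z}\in\dot y)\}$; a support witnessing respectedness of $\dot x$ together with $\cF_\delta$-closure under the operation shows $\bigcup^\bullet\dot x\in\IS_\delta$, and it interprets as $\bigcup(\dot x^G)$. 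For Separation and Replacement, I appeal to the standard observation that $M$, being definable in $V[G]$ from the parameter $\IS_\delta$ and $G$, admits its own forcing relation for the language of set theory restricted to $\IS_\delta$-names; one then mimics the proof for ordinary symmetric extensions, using \autoref{thm:IS-names are closed under mixing} to mix candidate witnesses across antichains so that the resulting name lies in $\IS_\delta$.

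The main obstacle is Power Set. Given $\dot x\in\IS_\delta$, I let $\kappa$ be large enough that every subset of $\dot x^G$ lying in $M$ is the interpretation of some $\dot y\in\IS_\delta$ of rank $<\kappa$; such a $\kappa$ exists in $V$ by a reflection-style argument on the class $\IS_\delta$, bounding the ranks of witnesses for subsets of $\dot x^G$. Set $X=\{\dot y\in\IS_\delta\mid\rank(\dot y)<\kappa,\ 1_\delta\forces_\delta\dot y\subseteq\dot x\}$ and $\dot P=\{\tup{1_\delta,\dot y}\mid\dot y\in X\}$. The set $X$ is definable in $V$ from $\dot x$ and the symmetric iteration data; by \autoref{thm:IS-names are closed under aut} every $\gaut{\vec\pi}\in\cG_\delta$ permutes $X$, so $\gaut{\vec\pi}\dot P=\dot P$ for every $\gaut{\vec\pi}\in\cG_\delta$, and the trivial support $\vec H$ (with each $\dot H_\alpha=\dot\sG_\alpha$) witnesses that $\dot P$ is respected. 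Every element of $\dot P$ is in $\IS_\delta$ by construction, so $\dot P\in\IS_\delta$, and $\dot P^G$ is the power set of $\dot x^G$ as computed in $M$. Assembling these pieces gives $M\models\ZF$.
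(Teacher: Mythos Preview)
Your Power Set argument has a genuine gap. You define $X=\{\dot y\in\IS_\delta\mid\rank(\dot y)<\kappa,\ 1_\delta\forces_\delta\dot y\subseteq\dot x\}$ and then assert that every $\gaut{\vec\pi}\in\cG_\delta$ permutes $X$, citing \autoref{thm:IS-names are closed under aut}. That theorem only tells you $\gaut{\vec\pi}\dot y\in\IS_\delta$; it says nothing about preserving the condition $1_\delta\forces_\delta\dot y\subseteq\dot x$. Applying $\gaut{\vec\pi}$ yields $1_\delta\forces_\delta\gaut{\vec\pi}\dot y\subseteq\gaut{\vec\pi}\dot x$, and since $\dot x$ is only \emph{respected} (stabilized on a pre-dense set by automorphisms from some support, not by all of $\cG_\delta$), you cannot conclude $1_\delta\forces_\delta\gaut{\vec\pi}\dot y\subseteq\dot x$. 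So $\dot P$ is not fixed by the full group and the trivial support does not witness respectedness.

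The fix is easy and is essentially what the paper does: drop the subset condition and take $X_\eta=\{\dot y\in\IS_\delta\mid\rank(\dot y)<\eta\}$. This set \emph{is} invariant under all of $\cG_\delta$ by \autoref{thm:IS-names are closed under aut} (automorphisms preserve rank), so $X_\eta^\bullet\in\IS_\delta$ with the trivial support. The paper uses exactly these names to show $M$ is almost-universal, and then invokes closure under G\"odel operations to get $\ZF$ in one stroke, rather than verifying the axioms individually. In your framework you could instead note that $X_\eta^\bullet$ interprets to a set in $M$ containing $\power^M(\dot x^G)$, and then apply the Separation you have already proved. Either way, the point you missed is that the invariant object is the rank-bounded slice of $\IS_\delta$, not the collection of names for subsets of a fixed $\dot x$.
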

\begin{proof}[Sketch of Proof]
The proof is very similar to the standard proof about symmetric extensions, e.g.\ the one in \cite{Jech:AC}. By the hereditary definition of $\IS_\delta$, $M$ is a transitive class. The $\check x$ names are fixed under all automorphisms, so they are certainly in $\IS_\delta$, so $V\subseteq M\subseteq V[G]$. By \autoref{thm:IS-names are closed under aut}, $\{\dot x\in\IS_\delta\mid\rank(\dot x)<\eta\}^\bullet$ is stable under all automorphisms in $\cG_\delta$, and therefore it is in $\IS_\delta$, and it is a witness to the fact that $M$ is almost-universal.\footnote{A class $M$ is almost universal if for every set $x\subseteq M$, there is $y\in M$ such that $x\subseteq y$.} Usual arguments also give us that $M$ is closed under G\"odel operations, and therefore it is a model of $\ZF$.

The additional part is trivial, as $\IS_\alpha\subseteq\IS_\delta$ for $\alpha<\delta$ (every $\PP_\alpha$-name is a $\PP_\delta$-name; and simply extending the $\cF_\alpha$-support in the obvious way, by concatenating $\dot\sG_\gamma$ for $\gamma\in[\alpha,\delta)$, gives the wanted conclusion).
\end{proof}
\subsection{The \texorpdfstring{$\IS$}{IS}-forcing relation}
Now that we have isolated a class of names which predict the intermediate model, we would like to also have a forcing relation so that we can predict facts about these names from the ground model. In this section we are not particularly interested in the iteration itself, so we will omit the $\delta$ subscript from where it is not needed: $\IS_\delta$ becomes $\IS$ and $\forces_\delta$ becomes $\forces$, and so on.
\begin{definition}[The Forcing Relation] We define $p\forces^\IS\varphi$ by induction on the complexity of $\varphi$.
\begin{itemize}
\item $p\forces^\IS\dot x\in\dot y$ if and only if $p\forces\dot x\in\dot y$ and $\dot x,\dot y\in\IS$.
\item $p\forces^\IS\dot x=\dot y$ if and only if $p\forces\dot x=\dot y$ and $\dot x,\dot y\in\IS$.
\item $p\forces^\IS\varphi\land\psi$ if and only if $p\forces^\IS\varphi$ and $p\forces^\IS\psi$.
\item $p\forces^\IS\lnot\varphi$ if and only if there is no $q\leq p$ such that $q\forces^\IS\varphi$.
\item $p\forces^\IS\exists x\varphi(x)$ if and only if $\{q\mid\exists\dot x\in\IS:q\forces^\IS\varphi(\dot x)\}$ is dense below $p$.
\end{itemize}
\end{definition}
The keen-eyed observer will note that this is really just the relativization of the usual forcing relation to the class of names $\IS$. We write a list of useful properties of the $\forces^\IS$ relation, they either follow immediately from the definition or from the same properties for the $\forces$ relation.
\begin{proposition}[Substitution Lemma]
If $p\forces^\IS\varphi(\dot x_1,\dots,\dot x_n)\land\bigwedge_{i=1}^n\dot x_i=\dot y_i$, then $p\forces^\IS\varphi(\dot y_1,\ldots,\dot y_n).$\qed
\end{proposition}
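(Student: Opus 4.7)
The plan is to induct on the complexity of $\varphi$, leveraging the ordinary Substitution Lemma for the unrelativized relation $\forces$, and using that the hypothesis $p\forces^\IS\dot x_i=\dot y_i$ already guarantees $\dot x_i,\dot y_i\in\IS$ for every $i$. Thus the only new content at each inductive step is to confirm that, on the right-hand side, every name we substitute in is itself in $\IS$; the rest is a transcription of the classical argument.

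For the atomic cases $\varphi\equiv(\dot u\in\dot v)$ or $\dot u=\dot v$, unfolding the definition of $\forces^\IS$ gives $p\forces\varphi(\dot x_1,\ldots,\dot x_n)$ together with membership of every $\dot x_i$ in $\IS$, and the hypothesis supplies $p\forces\dot x_i=\dot y_i$ and $\dot y_i\in\IS$. The ordinary Substitution Lemma then yields $p\forces\varphi(\dot y_1,\ldots,\dot y_n)$, and since every relevant name lies in $\IS$, we obtain $p\forces^\IS\varphi(\dot y_1,\ldots,\dot y_n)$. The conjunction clause is immediate from the inductive hypothesis applied to each conjunct.

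For negation, suppose $p\forces^\IS\lnot\varphi(\dot x_1,\ldots,\dot x_n)$ and let $q\leq p$. Since $p\forces^\IS\dot x_i=\dot y_i$, the same holds at $q$ (equality on $\forces^\IS$ is downward-closed, as it is for $\forces$, and $\dot x_i,\dot y_i\in\IS$). If some $q\leq p$ were to satisfy $q\forces^\IS\varphi(\dot y_1,\ldots,\dot y_n)$, applying the inductive hypothesis in the reverse direction (swapping the roles of $\dot x_i$ and $\dot y_i$, which is legitimate since $q\forces^\IS\dot y_i=\dot x_i$ too) produces $q\forces^\IS\varphi(\dot x_1,\ldots,\dot x_n)$, contradicting the hypothesis on $p$. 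Hence $p\forces^\IS\lnot\varphi(\dot y_1,\ldots,\dot y_n)$.

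For existentials, suppose $p\forces^\IS\exists x\,\varphi(x,\dot x_1,\ldots,\dot x_n)$ and let $q\leq p$. Density gives some $r\leq q$ and some $\dot z\in\IS$ with $r\forces^\IS\varphi(\dot z,\dot x_1,\ldots,\dot x_n)$; since $r\leq p$ we also have $r\forces^\IS\dot x_i=\dot y_i$, and applying the inductive hypothesis to the formula $\varphi(\dot z,\cdot)$ (treating $\dot z$ as a fixed parameter in $\IS$) yields $r\forces^\IS\varphi(\dot z,\dot y_1,\ldots,\dot y_n)$. Thus the witnessing set for $\exists x\,\varphi(x,\dot y_1,\ldots,\dot y_n)$ is dense below $p$, giving the desired conclusion. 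The only point demanding care is the bookkeeping that every substituted name is genuinely in $\IS$; this is essentially free thanks to the clause defining the atomic instances of $\forces^\IS$, so no real obstacle arises.
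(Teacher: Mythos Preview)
Your argument is correct. The paper gives no proof at all (the statement ends with \verb|\qed|), remarking only that such properties ``either follow immediately from the definition or from the same properties for the $\forces$ relation''; your induction on the complexity of $\varphi$, using that $p\forces^\IS\dot x_i=\dot y_i$ forces $\dot x_i,\dot y_i\in\IS$ and then invoking the ordinary Substitution Lemma, is exactly the routine verification the paper is leaving to the reader.
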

\begin{proposition}\hfill
\begin{enumerate}
\item If $p\forces^\IS\varphi$ and $q\leq p$, then $q\forces^\IS\varphi$.
\item There is no $p$ such that $p\forces^\IS\varphi\land\lnot\varphi$.
\item $p\forces^\IS\varphi$ if and only if $p\forces^\IS\lnot\lnot\varphi$.
\item $p\forces^\IS\varphi$ if and only if there is no $q\leq p$ such that $q\forces^\IS\lnot\varphi$.
\item $p\forces^\IS\varphi$ if and only if the set $\{q\mid q\forces^\IS\varphi\}$ is dense below $p$.
\item $p\forces^\IS\exists x\varphi(x)$ if and only if there exists $\dot x\in\IS$ such that $p\forces^\IS\varphi(\dot x)$.\footnote{This makes use of mixing, of course.}\qed
\end{enumerate}
\end{proposition}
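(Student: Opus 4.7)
The plan is to prove items (1)--(5) together by induction on the complexity of $\varphi$, and then derive (6) from (5) combined with \autoref{thm:IS-names are closed under mixing}. The guiding observation is that $\forces^\IS$ is just the ordinary forcing relation relativized to the class $\IS$: at the atomic level the only extra requirement is the side condition $\dot x,\dot y\in\IS$, which is independent of the condition, and which propagates through the inductive clauses without interacting with any density argument. So in every case the task reduces either to the parallel fact about $\forces$ or to purely propositional bookkeeping.

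Item (1) is a routine induction: the atomic clauses inherit monotonicity from $\forces$; conjunction is handled component-wise; for $\lnot$ one uses that any $r\leq q\leq p$ also satisfies $r\leq p$, so the ``no $r\leq p$ forces $\varphi$'' hypothesis transfers to $q$; and for $\exists$, density below $p$ is preserved under passing to any $q\leq p$. Item (2) is immediate from the definition of $\forces^\IS\lnot$: if $p\forces^\IS\varphi$, then $p\leq p$ is itself the witness that $p$ cannot force $\lnot\varphi$.

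For (5) I induct on complexity once more. The atomic and $\land$-clauses follow directly from the analogous density property of $\forces$ (together with the fact that the $\IS$-membership sidebar is vacuous at density). For $\lnot\varphi$: if the set $\{q\mid q\forces^\IS\lnot\varphi\}$ is dense below $p$, then any $r\leq p$ with $r\forces^\IS\varphi$ admits, by density, some $s\leq r$ forcing $\lnot\varphi$; but then (1) gives $s\forces^\IS\varphi$, contradicting (2). The $\exists$ case is immediate from nested density. Given (5), item (4) is a direct unfolding of the $\lnot$-clause followed by an appeal to (5) to convert ``for every $q\leq p$ there is $r\leq q$ with $r\forces^\IS\varphi$'' back into $p\forces^\IS\varphi$; and (3) falls out by applying (4) to the double negation.

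Finally, (6) is the only step where a genuinely symmetric tool intervenes. One direction is immediate from the definition. For the forward direction, unfolding gives a dense set below $p$ of conditions $q$ each equipped with some $\dot x_q\in\IS$ such that $q\forces^\IS\varphi(\dot x_q)$; thinning to a maximal antichain $D$ below $p$ we obtain an antichain-indexed family of $\IS$-names. The key move is to apply \autoref{thm:IS-names are closed under mixing} to mix the family $\{\dot x_q\mid q\in D\}$ into a single $\dot x\in\IS$ with $p\forces^\IS\varphi(\dot x)$. The hard part, and essentially the only non-bookkeeping ingredient in the whole proposition, is precisely this appeal to closure of $\IS$ under mixing; the analogous $\ZFC$ proof uses ordinary mixing, and \autoref{thm:IS-names are closed under mixing} is exactly what lets us carry out the same move while staying inside $\IS$.
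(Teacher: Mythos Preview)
Your proposal is correct and is exactly the natural expansion of what the paper leaves implicit: the paper gives no proof here beyond the remark that the items ``either follow immediately from the definition or from the same properties for the $\forces$ relation'' and a \qed. Your identification of \autoref{thm:IS-names are closed under mixing} as the one substantive ingredient---needed precisely for the forward direction of (6), playing the role of the ordinary mixing/maximal principle---is the right diagnosis. One cosmetic point: when you thin to a maximal antichain $D$ below $p$, you should extend it to a maximal antichain in all of $\PP_\delta$ (assigning, say, $\check\varnothing$ on the part incompatible with $p$) before invoking \autoref{thm:IS-names are closed under mixing}, since that theorem is stated for maximal antichains in the whole poset; this is a trivial adjustment and does not affect the argument.
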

\begin{lemma}[The ($\IS$-)Symmetry Lemma] If $\gaut{\vec\pi}\in\cG$ and $\dot x\in\IS$, then \[p\forces^\IS\varphi(\dot x)\iff\gaut{\vec\pi}p\forces^\IS\varphi(\gaut{\vec\pi}\dot x).\qed\]
\end{lemma}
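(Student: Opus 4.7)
The plan is to mimic the standard proof of the Symmetry Lemma (\autoref{lemma:symmetry}), proceeding by induction on the complexity of $\varphi$, while carefully checking at each step that the $\IS$-membership side conditions in the definition of $\forces^\IS$ are preserved under $\gaut{\vec\pi}$. The linchpin is \autoref{thm:IS-names are closed under aut}, which guarantees that whenever $\dot x\in\IS$ we also have $\gaut{\vec\pi}\dot x\in\IS$; this is precisely what lets the relativized forcing relation transport along automorphisms.

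First I would handle the atomic cases. If $p\forces^\IS\dot x\in\dot y$, then by definition $\dot x,\dot y\in\IS$ and $p\forces\dot x\in\dot y$. The ordinary Symmetry Lemma gives $\gaut{\vec\pi}p\forces\gaut{\vec\pi}\dot x\in\gaut{\vec\pi}\dot y$, and \autoref{thm:IS-names are closed under aut} gives $\gaut{\vec\pi}\dot x,\gaut{\vec\pi}\dot y\in\IS$, so $\gaut{\vec\pi}p\forces^\IS\gaut{\vec\pi}\dot x\in\gaut{\vec\pi}\dot y$. The converse follows by applying the same argument with $\gaut{\vec\pi^{-1}}$ and using that $\gaut{\vec\pi^{-1}}\gaut{\vec\pi}$ is the identity on both conditions and names. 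The equality atomic case is identical. The conjunction step is immediate from the induction hypothesis.

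For negation, suppose $p\forces^\IS\lnot\varphi(\dot x)$ but, for contradiction, some $q'\leq\gaut{\vec\pi}p$ satisfies $q'\forces^\IS\varphi(\gaut{\vec\pi}\dot x)$. Then $\gaut{\vec\pi^{-1}}q'\leq\gaut{\vec\pi^{-1}}\gaut{\vec\pi}p=p$ (order preservation), and by the induction hypothesis applied to $\gaut{\vec\pi^{-1}}$ we would have $\gaut{\vec\pi^{-1}}q'\forces^\IS\varphi(\gaut{\vec\pi^{-1}}\gaut{\vec\pi}\dot x)=\varphi(\dot x)$, contradicting $p\forces^\IS\lnot\varphi(\dot x)$. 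The converse direction is symmetric. For the existential quantifier, $p\forces^\IS\exists x\,\varphi(x)$ means there is a dense set of $q\leq p$ admitting some $\dot x_q\in\IS$ with $q\forces^\IS\varphi(\dot x_q)$; applying the induction hypothesis yields $\gaut{\vec\pi}q\forces^\IS\varphi(\gaut{\vec\pi}\dot x_q)$, and each $\gaut{\vec\pi}\dot x_q\in\IS$ by \autoref{thm:IS-names are closed under aut}, so the images form a dense set below $\gaut{\vec\pi}p$ witnessing the existential on the other side.

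There is no real obstacle here beyond bookkeeping: the entire content of the lemma is that the side conditions ``$\dot x\in\IS$'' embedded in each clause of $\forces^\IS$ are stable under $\gaut{\vec\pi}$, which is exactly \autoref{thm:IS-names are closed under aut}. The only place where one must be slightly careful is in the negation and existential clauses, where one uses $\gaut{\vec\pi^{-1}}$ and relies on the induction hypothesis being symmetric in the choice of automorphism, but since the statement is an ``iff'' over all $\gaut{\vec\pi}\in\cG$ this is automatic.
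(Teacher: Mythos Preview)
Your proof is correct. The paper does not actually give a proof of this lemma---the statement ends with a \qed\ and no argument is supplied---so there is nothing to compare against beyond noting that your induction on formula complexity, using the ordinary Symmetry Lemma for the atomic cases and \autoref{thm:IS-names are closed under aut} to carry the $\IS$-membership side conditions through, is precisely the routine verification the author is leaving to the reader.
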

\begin{theorem}[The Forcing Theorem for $\forces^\IS$]
The following are equivalent:
\begin{enumerate}
\item $p\forces^\IS\varphi$.
\item For every $V$-generic $G$ such that $p\in G$, $\IS^G\models\varphi$.\qed
\end{enumerate}
\end{theorem}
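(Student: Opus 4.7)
The plan is to prove the theorem by induction on the complexity of $\varphi$, following the familiar template of the classical Forcing Theorem, but with each step routed through the fact that $\IS^G$ is a transitive subclass of $V[G]$ and that $\forces^\IS$ was defined as a relativization of $\forces$ to the class $\IS$.

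First I would handle the atomic cases $\dot x\in\dot y$ and $\dot x=\dot y$. By definition $p\forces^\IS\dot x\in\dot y$ means $\dot x,\dot y\in\IS$ together with $p\forces\dot x\in\dot y$, and by the usual Forcing Theorem for $\forces$ the latter is equivalent to $\dot x^G\in\dot y^G$ holding in $V[G]$ for every generic $G$ with $p\in G$. Since $\dot x,\dot y\in\IS$ gives $\dot x^G,\dot y^G\in\IS^G$ by \autoref{thm:ZF models}, and since $\IS^G$ is transitive, truth of the atomic formula in $V[G]$ and in $\IS^G$ coincide. Equality is identical.

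Next I would dispatch conjunction, which is immediate from the induction hypothesis. For negation I would argue as follows. If $p\forces^\IS\lnot\varphi$ and $p\in G$, then assuming toward a contradiction that $\IS^G\models\varphi$, the induction hypothesis produces some $q\in G$ with $q\forces^\IS\varphi$; pick a common extension $q'\in G$ of $p$ and $q$, obtain $q'\leq p$ with $q'\forces^\IS\varphi$, contradicting the definition. Conversely, if the definition of $p\forces^\IS\lnot\varphi$ fails there is $q\leq p$ with $q\forces^\IS\varphi$, and any generic containing $q$ then contains $p$ and witnesses $\IS^G\models\varphi$ by induction. The crucial point, and the place I expect to have to be careful, is the existential case: $p\forces^\IS\exists x\varphi(x)$ is defined as density below $p$ of the set of $q$ admitting some $\dot x\in\IS$ with $q\forces^\IS\varphi(\dot x)$. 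In the forward direction, for $p\in G$, density below $p$ yields some $q\in G$ and some $\dot x\in\IS$ with $q\forces^\IS\varphi(\dot x)$, and the induction hypothesis gives $\IS^G\models\varphi(\dot x^G)$ with $\dot x^G\in\IS^G$; thus $\IS^G\models\exists x\varphi(x)$. For the converse, given any $q\leq p$, take a generic $G$ with $q\in G$; by assumption $\IS^G\models\exists x\varphi(x)$, so some $\dot x\in\IS$ satisfies $\IS^G\models\varphi(\dot x^G)$, and by the induction hypothesis some $r\in G$ forces $\varphi(\dot x)$ in the $\IS$-sense; a common extension $r'\in G$ of $r$ and $q$ then lies below $q$ and witnesses the density.

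The main obstacle, to the extent there is one, is the existential clause: one has to be sure that the witness $\dot x$ extracted in $V[G]$ really comes from the class $\IS$ (rather than from a larger class of $\PP_\delta$-names), and this is precisely what the definition $\{q\mid \exists\dot x\in\IS\colon q\forces^\IS\varphi(\dot x)\}$ and its equivalent reformulation in the preceding proposition guarantee. Everything else is by now mechanical, since the $\IS$-Symmetry Lemma and the density and persistence properties listed above mirror the classical case, and since for atomic formulas the absoluteness between $\IS^G$ and $V[G]$ is automatic from transitivity.
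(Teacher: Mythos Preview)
Your proposal is correct and is precisely the standard induction on formula complexity that the paper has in mind; note that the paper gives no proof at all for this theorem (the \texttt{\textbackslash qed} is placed inside the statement), having remarked just above that $\forces^\IS$ is ``really just the relativization of the usual forcing relation to the class of names $\IS$''. Your sketch fills in exactly that relativization argument, with the only nontrivial point being the one you flag---that existential witnesses can be taken from $\IS$---which is guaranteed by the definition of $\IS^G$ and by the mixing-style clause~(6) of the preceding proposition.
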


\section{Toy example: No free ultrafilters on \texorpdfstring{$\omega$}{w}}\label{section:toy}
We digress from the rest of the general theory in order to provide a small example of using the construction we have so far. This might not be a very exciting example, and it is quite easy to achieve with symmetric extensions in the usual way. However, they can provide some basic ground for understanding the mechanism of symmetric iterations.

Our setting is very simple. We are going to force with an iteration of length $\omega$ and for every $n<\omega$ we use the same symmetric system:
\begin{enumerate}
\item $\dot\QQ_n$ is the Cohen forcing, presented as finite binary sequences;
\item $\dot\sG_n$ will be the name of the group $\{\pi_A\mid A\subseteq\omega\}$, in the $n$th extension, where $\pi_A p(n)=p(n)$ if $n\in A$, and $\pi_A p(n)=1-p(n)$ otherwise.
\item $\dot\sF_n$ will be the improper filter.
\end{enumerate}
We will omit $\omega$ from the subscripts, so $\PP=\PP_\omega$, $\IS=\IS_\omega$ and so on. Let $G$ be a $V$-generic filter for $\PP$, and let $N=\IS^G$. Note that the last condition means that every $\PP_n$-name is in $\IS$. 
\begin{proposition}
$N\models$ Every ultrafilter on $\omega$ is principal.
\end{proposition}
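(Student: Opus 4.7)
The plan is to exploit the large symmetry group available at every coordinate of the iteration to force any ultrafilter name $\dot U \in \IS$ to contain a finite subset of $\omega$, from which principality follows by a standard $\ZF$ argument. Let $p \forces^\IS$ ``$\dot U$ is an ultrafilter on $\omega$''. By \autoref{lemma:pre-dense refinement} and \autoref{cor:pairing}, strengthen $p$ to a tenacious $p_*$ with an excellent support $\vec H$ simultaneously witnessing $p_*$'s tenacity and the fact that $\dot U$ is $\cF$-respected. Let $F = C(\vec H)$, which is finite; by excellence, $\dot H_n = \dot\sG_n$ is the full bit-flip group for every $n \notin F$. Since each $\dot c_n$ is a $\PP_{n+1}$-name it lies in $\IS$ (as the improper filter is used at every step), so in $N$ each Cohen real $c_n$ is a subset of $\omega$ on which $\dot U$ must take a definite opinion.

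Fix $n \notin F$. Then $p_* \forces^\IS \dot c_n \in \dot U \vee \omega \setminus \dot c_n \in \dot U$; pass to a tenacious $q \leq p_*$ deciding this, with tenacity witness $\vec K$ which (by replacing with the pointwise intersection with $\vec H$) we may assume is contained in $\vec H$. The two cases being completely symmetric, assume $q \forces^\IS \dot c_n \in \dot U$. Following the construction in the proof of \autoref{thm:iterated tenacity}, we may further assume that $q \restriction n$ decides $q(n)$ to be a specific finite binary sequence $s \colon k \to 2$ in $V$, and that $\dot K_n$ is the stabilizer of $s$ in $\sG_n$. Let $\pi \in \sG_n$ be the automorphism $\pi_A$ with $A = \{0, \ldots, k-1\}$, which fixes bits in $[0, k)$ and flips bits in $[k, \omega)$; since $A \supseteq \dom s$, $\pi$ fixes $s$, so the single-coordinate automorphism sequence $\vec\pi$ with $\vec\pi_n = \pi$ lies in $\vec K$, and $\vec\pi \in \vec H$ because $\dot H_n$ is the full group. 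Tenacity gives $\gaut{\vec\pi}q = q$; the respect property gives $q \forces \gaut{\vec\pi}\dot U = \dot U$; and the $\IS$-Symmetry Lemma then gives $q \forces^\IS \gaut{\vec\pi}\dot c_n \in \dot U$.

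A direct computation on the standard name shows $\gaut{\vec\pi}\dot c_n$ is a name for $c_n \mathbin\triangle (\omega \setminus k)$, which agrees with $c_n$ on $[0, k)$ and disagrees on $[k, \omega)$. Their intersection is therefore $c_n \cap k$, which equals the specific finite set $\{j < k : s(j) = 1\}$ already determined by $q$; closure of $\dot U$ under intersections forces this finite set into $\dot U$. Since any ultrafilter containing a finite set is principal, $q \forces^\IS \dot U$ is principal, and by density we conclude $p \forces^\IS \dot U$ is principal. The main technical point is the dual role of $\pi$: excellence of $\vec H$ at coordinates outside $F$ gives us the entire bit-flip group $\sG_n$, while tenacity of $q$ requires $\pi$ to fix the finite initial segment $s$; the choice $A = \dom s$ satisfies both constraints, with plenty of room, since $\sG_n$ contains all automorphisms that flip the infinite tail of $c_n$ while keeping $s$ intact.
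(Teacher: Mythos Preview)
Your proof is correct and follows essentially the same strategy as the paper's: pick a coordinate $n$ outside the support witnessing $\dot U\in\IS$, use the bit-flip automorphism $\pi_A$ with $A=\dom q(n)$ so that $\pi_A$ fixes $q(n)$ while flipping the tail of $\dot c_n$, and conclude that the finite set $q(n)^{-1}(1)$ is forced into $\dot U$. The one cosmetic difference is in how you verify $\gaut{\vec\pi}q=q$: the paper first uses the homogeneity of the iterands above $n$ to replace $q$ by $q\restriction n{+}1$, after which the equality is immediate; you instead invoke tenacity of $q$ with a witness $\vec K$ chosen so that $\dot K_n$ is the stabilizer of $s$. Both routes work, and the paper's homogeneity reduction is arguably the more elementary of the two, but your appeal to the tenacity machinery is perfectly legitimate here since $\dot\sF_n$ is the improper filter.
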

\begin{proof}
Suppose that $\dot U\in\IS$ is such that $1\forces^\IS\dot U$ is an ultrafilter on $\check\omega$. Let $D$ be a pre-dense set witnessing that $\dot U\in\IS$, and suppose that $p\in D$, and $\vec H_p$ is an excellent support such that $p\forces\gaut{\vec\pi}\dot U=\dot U$ for every $\gaut{\vec\pi}\in\vec H_p(p)$. We will show that every $p\in D$ forces that $\dot U$ is principal.

Let $n>\max\supp p+\max C(\vec H_p)$, and let $\dot c_n$ be the canonical name for the Cohen real added by $\dot\QQ_n$, namely $\{\tup{p,\check m}\mid \supp p=\{n\}\land p(n,m)=1\}$. Suppose that $q\leq p$ and $q\forces\dot c_n\in\dot U$. We claim that $q\restriction n+1\forces\dot c_n\in\dot U$: for every $k$, $\dot\sG_k$ witnesses the homogeneity of $\dot\QQ_k$, so the entire iteration is weakly homogeneous; taking any two extensions of $q\restriction n+1$, we can make them compatible without moving anything below $n+1$ and such automorphism $\vec\sigma$ will satisfy that $\gaut{\vec\sigma}\in\vec H_p(p)$, as well $\gaut{\vec\sigma}\dot c_n=\dot c_n$, so both extensions must agree on the truth value of $\dot c_n\in\dot U$. So we may assume that $q=q\restriction n+1$.

Let $\dot A$ be a $\PP_n$-name for a subset of $\omega$ such that $1_n\forces_n\dom q(n)=\dot A$, and let $\dot\pi$ be the $\PP_n$-name for the automorphism induced on $\dot\QQ_n$ by $\dot A$. Then the following are true,
\begin{enumerate}
\item $1_n\forces_n\dot\pi q(n)=q(n)$, and consequently $\gaut{\dot\pi}q=q$ and $\gaut{\dot\pi}\in\vec H_p(p)$.
\item $1_n\forces_n\dot\pi\dot c_n\cap\dot c_n=q(n)^{-1}(\check 1)$.
\end{enumerate}
Therefore, $q\forces\gaut{\dot\pi}\dot U=\dot U$, but also $q\forces q(n)^{-1}(\check 1)\in\dot U$. Therefore $q\forces``$there is a finite set in $\dot U$''. It follows that if $p\forces^\IS\dot U$ is an ultrafilter on $\check\omega$, then no extension of $p$ can force that $\dot U$ is free.
\end{proof}
The construction is originally due to Solomon Feferman (\cite[Theorem~4.12]{Feferman:1964} for the original result, or \cite[Example~15.59]{Jech:ST} for a modern treatment).

\section{Symmetric iterations are iterations of symmetric extensions}\label{section:justification}
\subsection{Outline of the section}
We would want to justify the name ``symmetric iterations'' and to prove that if $G$ is a $V$-generic filter for $\PP_\delta$, then for every $\alpha<\delta$, $\IS_{\alpha+1}^{G\restriction\alpha+1}$ is a symmetric extension of $\IS_\alpha^{G\restriction\alpha}$ using the symmetric system $\tup{\dot\QQ_\alpha^{G\restriction\alpha},\dot\sG_\alpha^{G\restriction\alpha},\dot\sF_\alpha^{G\restriction\alpha}}$.

Ideally, would we would like to have some sort of a decomposition theorem at play. Something similar to the usual case of a finite support iteration: $\PP_\delta$ is isomorphic to the iteration of $\PP_\alpha\ast(\dot\QQ_\alpha\ast\PP_\delta/\PP_{\alpha+1})$, as well as to the iteration $(\PP_\alpha\ast\dot\QQ_\alpha)\ast\PP_\delta/\PP_{\alpha+1}$. But here we have a problem. When we want to decompose the automorphisms and the supports, we relied on mixing so heavily, that there is no nice way of doing so in general, as we may have mixed over antichains that were unstable under most automorphisms we had at our disposal.

However, not all is lost. For ``sufficiently cooperative'' $\PP_{\alpha+1}$-names we actually have a good decomposition, and as luck would have it, every name in $\IS_{\alpha+1}$ is eventually forced to be equal to such a ``cooperative'' name. So the plan is to first look at two-step iterations, and see how the automorphisms and supports behave when we move from a $\PP\ast\dot\QQ$-name to a $\PP$-name for a $\dot\QQ$-name. This will help us to formulate the two main lemmas for going back and forth $\IS_\alpha$ and $\IS_{\alpha+1}$, as well $V[G\restriction\alpha]$ and $\IS_\alpha^{G\restriction\alpha}$. From the lemmas we will have the two theorems that a symmetric iteration is indeed an iteration of symmetric extensions, and that if we take a symmetric extension of a symmetric iteration, then we could have done that by extending the original symmetric iteration by exactly that last step (although we may have to pay a price by shrinking our automorphism groups).
\subsection{The factorization lemmas}
\begin{definition}
Let $\dot x$ be a $\PP\ast\dot\QQ$-name. We denote by $[\dot x]$ the $\PP$-name of the $\dot\QQ$-name for $\dot x$, defined recursively by
\[[\dot x]=\{\tup{p,\tup{\dot q,[\dot y]}^\bullet}\mid\tup{\tup{p,\dot q},\dot y}\in\dot x\}.\]
\end{definition}
Here are two general lemmas which can be proved by induction on the rank of $\PP\ast\dot\QQ$-names.
\begin{lemma}\label{lemma:factoring and equality}
Suppose that $\tup{p,\dot q}\forces_{\PP\ast\dot\QQ}\dot x=\dot x'$, then $p\forces_\PP``\dot q\forces_{\dot\QQ}[\dot x]=[\dot x']"$.\qed
\end{lemma}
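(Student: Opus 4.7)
The natural plan is to reduce the lemma to a semantic statement via the Forcing Theorem. The key intermediate claim I would prove is the \emph{commutativity of interpretations}: for every $\PP\ast\dot\QQ$-generic filter $G\ast H$ over $V$ (so $G$ is $V$-generic for $\PP$ and $H$ is $V[G]$-generic for $\dot\QQ^G$) and every $\PP\ast\dot\QQ$-name $\dot x$,
\[\dot x^{G\ast H}=([\dot x]^G)^H.\]
Once this is in hand, the lemma is immediate. Assuming $\tup{p,\dot q}\forces_{\PP\ast\dot\QQ}\dot x=\dot x'$, take any $V$-generic $G\ni p$ for $\PP$ and any $V[G]$-generic $H\ni\dot q^G$ for $\dot\QQ^G$; then $\tup{p,\dot q}\in G\ast H$, so $\dot x^{G\ast H}=\dot x'^{G\ast H}$, and the commutativity gives $([\dot x]^G)^H=([\dot x']^G)^H$. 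As $H$ varies, the Forcing Theorem inside $V[G]$ yields $\dot q^G\forces_{\dot\QQ^G}[\dot x]^G=[\dot x']^G$; and as $G$ varies, the Forcing Theorem in $V$ yields $p\forces_\PP\dot q\forces_{\dot\QQ}[\dot x]=[\dot x']$, as wanted.

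The commutativity of interpretations itself I would establish by $\in$-induction on the $\PP\ast\dot\QQ$-name $\dot x$. Unwinding the defining recursion of $[\dot x]$ and recalling that the $\bullet$-name $\tup{\dot q,[\dot y]}^\bullet$ is interpreted as the ordered pair $(\dot q^G,[\dot y]^G)$, both sides of the equality expand to
\[\{\dot y^{G\ast H}\mid\tup{\tup{p,\dot q},\dot y}\in\dot x,\ p\in G,\ \dot q^G\in H\}\]
once the induction hypothesis $\dot y^{G\ast H}=([\dot y]^G)^H$ is applied to every $\dot y$ appearing in $\dot x$. The enumeration of elements on the $[\dot x]$ side uses the standard fact that $\tup{p,\dot q}\in G\ast H$ iff $p\in G$ and $\dot q^G\in H$, which belongs to the bookkeeping of two-step iterations.

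I do not anticipate a serious obstacle. The only points that require care are the two pieces of bookkeeping just mentioned—the characterization of membership in $G\ast H$, and the interpretation of the $\bullet$-pair $\tup{\dot q,[\dot y]}^\bullet$. Both are routine; once laid out, what remains is an essentially definitional induction, and the lemma then follows from a double application of the Forcing Theorem in the manner described.
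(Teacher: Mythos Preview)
Your proposal is correct. The paper does not actually write out a proof: it only remarks that the lemma ``can be proved by induction on the rank of $\PP\ast\dot\QQ$-names'' and ends with a \qed. Your argument realizes exactly that induction, but packaged semantically: you first isolate the commutativity identity $\dot x^{G\ast H}=([\dot x]^G)^H$ (proved by $\in$-induction on the name, which is the induction the paper alludes to), and then pull the syntactic conclusion back via two applications of the Forcing Theorem. A purely syntactic execution of the hint would instead unfold the clauses of $\forces_{\PP\ast\dot\QQ}$ for $\in$ and $=$ and match them against the clauses of the iterated relation $p\forces_\PP``\dot q\forces_{\dot\QQ}\cdots"$, again by induction on rank. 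Both routes are standard and equivalent in strength; your semantic detour has the advantage that the commutativity identity is a reusable lemma in its own right (and is in fact the real content behind both this lemma and the companion \autoref{lemma:factoring and aut}).
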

\begin{lemma}\label{lemma:factoring and aut}
Suppose that $\pi\in\aut(\PP)$ respects $\dot\QQ$ and $1_\PP\forces_\PP\dot\sigma\in\aut(\dot\QQ)$. Then \[1_\PP\forces_\PP[\gaut{\tup{\pi,\dot\sigma}}\dot x]=\pi(\dot\sigma[\dot x]).\]Alternatively, $1_\PP\forces_\PP[\gaut{\pi}\dot x]=\pi[\dot x]$ and $[\gaut{\dot\sigma}\dot x]=\dot\sigma[\dot x]$.\qed
\end{lemma}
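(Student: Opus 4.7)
The plan is to proceed by induction on the rank of the $\PP\ast\dot\QQ$-name $\dot x$, proving the two simpler identities ``$1_\PP\forces_\PP[\gaut{\pi}\dot x]=\pi[\dot x]$'' and ``$1_\PP\forces_\PP[\gaut{\dot\sigma}\dot x]=\dot\sigma[\dot x]$'' separately; the combined statement then falls out from $\gaut{\tup{\pi,\dot\sigma}}=\gaut{\pi}\gaut{\dot\sigma}$ together with the Symmetry Lemma applied to $\PP$ to push $\pi$ through a forced equality.

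For the first identity I expect literal equality of names, not merely forced equality. Unfolding both sides using the definition of $[\cdot]$ gives
\[[\gaut{\pi}\dot x]=\{\tup{\pi p,\tup{\pi\dot q,[\gaut{\pi}\dot y]}^\bullet}\mid\tup{\tup{p,\dot q},\dot y}\in\dot x\},\]
while $\pi[\dot x]$ replaces each pair $\tup{p,\tup{\dot q,[\dot y]}^\bullet}$ in $[\dot x]$ by $\tup{\pi p,\tup{\pi\dot q,\pi[\dot y]}^\bullet}$. The induction hypothesis supplies the literal equality $[\gaut{\pi}\dot y]=\pi[\dot y]$, and since $\pi$ fixes $1_\PP$ it commutes past the $\bullet$-notation, so the two descriptions coincide as sets of pairs. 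The assumption that $\pi$ respects $\dot\QQ$ is used exactly to ensure that $\gaut{\pi}$ genuinely acts on $\PP\ast\dot\QQ$ via $\tup{p,\dot q}\mapsto\tup{\pi p,\pi\dot q}$ in the first place.

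The second identity is subtler and requires forced equality, because $\gaut{\dot\sigma}$ acts by $\tup{p,\dot q}\mapsto\tup{p,\dot\sigma(\dot q)}$ where $\dot\sigma(\dot q)$ is the mixed $\PP$-name only guaranteed to satisfy $1_\PP\forces_\PP\dot\sigma(\dot q)=\dot\sigma\dot q$. Here I would argue semantically: fix an arbitrary $V$-generic $G\subseteq\PP$ and match the two $\dot\QQ^G$-names $[\gaut{\dot\sigma}\dot x]^G$ and $(\dot\sigma[\dot x])^G$ pair by pair. On the second coordinates the induction hypothesis gives $[\gaut{\dot\sigma}\dot y]^G=(\dot\sigma[\dot y])^G$; on the first coordinates we use $(\dot\sigma(\dot q))^G=\dot\sigma^G(\dot q^G)$, which holds by the construction of $\dot\sigma(\dot q)$ via mixing. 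Since $G$ is arbitrary, this gives the forced equality. Finally, to deduce the main identity: by the first part (applied to the name $\gaut{\dot\sigma}\dot x$) we have literally $[\gaut{\pi}\gaut{\dot\sigma}\dot x]=\pi[\gaut{\dot\sigma}\dot x]$, and applying $\pi$ to both sides of the forced equality $[\gaut{\dot\sigma}\dot x]=\dot\sigma[\dot x]$ (legitimate by the Symmetry Lemma, since $\pi\in\aut(\PP)$) yields $\pi[\gaut{\dot\sigma}\dot x]=\pi(\dot\sigma[\dot x])$, which chains together into $1_\PP\forces_\PP[\gaut{\tup{\pi,\dot\sigma}}\dot x]=\pi(\dot\sigma[\dot x])$. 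The only real obstacle is the bookkeeping around where equality is literal and where it is only forced; once that distinction is kept straight, each step is a routine unfolding of the definitions.
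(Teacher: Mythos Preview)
Your proposal is correct and follows exactly the approach the paper indicates: the paper states this lemma together with \autoref{lemma:factoring and equality} as ``two general lemmas which can be proved by induction on the rank of $\PP\ast\dot\QQ$-names'' and then marks both with \qed, giving no further details. Your write-up fills in precisely that induction, including the correct distinction between the literal equality for the $\pi$-part and the merely forced equality for the $\dot\sigma$-part, and the use of the Symmetry Lemma to combine them; there is nothing to add.
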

\begin{definition}
Suppose that $\PP\ast\dot\QQ$ is a two-step iteration of symmetric extensions. We denote by $\HS^\bullet_{\sF_\QQ}$ the class of $\PP$-names which are forced to be hereditarily $\dot\sF_\QQ$-symmetric $\dot\QQ$-names.
\end{definition}
Morally, when we are iterating two symmetric extensions, we want to have $\dot x\in\IS$ if and only if ``$[\dot x]\in\HS_{\sF_\PP}$ and $1_\PP\forces_\PP[\dot x]\in\HS_{\sF_\QQ}^\bullet$''. In the general context of symmetric iteration, if $\PP_\alpha\ast\dot\QQ_\alpha$ is a successor length symmetric iteration, we would have liked to have $\dot x\in\IS_{\alpha+1}$ if and only if $[\dot x]\in\IS_\alpha$ and $1_\alpha\forces_\alpha^\IS[\dot x]\in\HS_{\sF_\alpha}^\bullet$. Unfortunately, this is not going to be the case, since $\dot x$ might have been the result of mixing over antichains which are ``invisible'' to the intermediate extension. Namely, if $G$ is a generic for $\PP_\alpha$, then $\dot x$ might be the result of mixing over names from $\IS_\alpha^G$ using an antichain of $\dot\QQ_\alpha^G$ which lies outside $\IS_\alpha^G$.

This shows that mixing, while very useful and even necessary to a certain degree, is an obstruction. However, it is the only obstruction, as shown below.
\begin{lemma}[The First Factorization Lemma]
Let $\PP_\alpha\ast\dot\QQ_\alpha$ be a successor length symmetric iteration and $G$ is a $V$-generic filter for $\PP_\alpha$. If $\dot x\in\IS_{\alpha+1}$, then in $V[G]$ there exists a maximal antichain $D\subseteq\QQ_\alpha$ such that for every $q\in D$ there exists $\dot x_q\in\IS_\alpha^G$ which is a hereditarily $\cF_\alpha$-symmetric $\QQ_\alpha$-name and $q\forces_{\QQ_\alpha}[\dot x]^G=\dot x_q$.
\end{lemma}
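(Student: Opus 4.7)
The plan is to prove the lemma by induction on $\rank(\dot x)$, using the two factorization lemmas (\autoref{lemma:factoring and equality} and \autoref{lemma:factoring and aut}) to translate between the $\PP_{\alpha+1}$-world and the $\PP_\alpha$-world, together with the fact (\autoref{thm:IS-names are closed under mixing}) that $\IS$-names are closed under mixing. First, since $\dot x\in\IS_{\alpha+1}$, I would invoke \autoref{lemma:pre-dense refinement} to pick a pre-dense set $D_0\subseteq\PP_{\alpha+1}$ of tenacious conditions $\tup{p,\dot q}$, each equipped with an excellent $\cF_{\alpha+1}$-support $\vec H^{\tup{p,\dot q}}$ witnessing simultaneously the tenacity of $\tup{p,\dot q}$ and the respectedness of $\dot x$. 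Each such support factors as $\vec H^{\tup{p,\dot q}}=\vec E^{\tup{p,\dot q}}{}^\smallfrown\dot H_\alpha^{\tup{p,\dot q}}$, where $\vec E^{\tup{p,\dot q}}$ is an excellent $\cF_\alpha$-support and $\dot H_\alpha^{\tup{p,\dot q}}$ is a $\PP_\alpha$-name appearing in $\dot\sF_\alpha$. Projecting $D_0$ onto the first coordinate yields a pre-dense subset of $\PP_\alpha$, so by $V$-genericity there will be cofinally many $\tup{p,\dot q}\in D_0$ with $p\in G$; this lets me assemble the candidate maximal antichain $D\subseteq\QQ_\alpha$ in $V[G]$ by taking $q\leq_{\QQ_\alpha}\dot q^G$ for each such witness.

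Next, I would apply the induction hypothesis to every name $\dot y$ appearing in $\dot x$ to produce, for each such $\dot y$, a maximal antichain $D_{\dot y}\subseteq\QQ_\alpha$ in $V[G]$ and hereditarily $\cF_\alpha$-symmetric $\QQ_\alpha$-names $\dot y_{q'}\in\IS_\alpha^G$ with $q'\forces_{\QQ_\alpha}[\dot y]^G=\dot y_{q'}$ for each $q'\in D_{\dot y}$. Since $V[G]\models\ZFC$, the mixing result for symmetric extensions (the single-step analogue of \autoref{thm:IS-names are closed under mixing}, applied inside the symmetric system $\tup{\QQ_\alpha,\sG_\alpha^G,\sF_\alpha^G}$) allows me to combine the $\dot y_{q'}$ into a single hereditarily symmetric $\QQ_\alpha$-name $\widetilde{\dot y}\in\IS_\alpha^G$ with $1_{\QQ_\alpha}\forces_{\QQ_\alpha}[\dot y]^G=\widetilde{\dot y}$. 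For a fixed $q\in D$ witnessed by $\tup{p_*,\dot q_*}\in D_0$ with $p_*\in G$ and $q\leq\dot q_*^G$, I then define
\[\dot x_q=\bigl\{\tup{q'',\widetilde{\dot y}}\bigm| \tup{\tup{p,\dot r},\dot y}\in\dot x,\ p\in G,\ q''\leq q\wedge\dot r^G,\ q''\forces_{\QQ_\alpha}\widetilde{\dot y}\in[\dot x]^G\bigr\}.\]
The desired equality $q\forces_{\QQ_\alpha}[\dot x]^G=\dot x_q$ should then fall out from \autoref{lemma:factoring and equality} applied below $\tup{p_*,\dot q_*}$ together with the definition of the interpretation of $[\dot x]$.

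The main obstacle is verifying that $\dot x_q$ genuinely lives in $\IS_\alpha^G$ (i.e.\ is the interpretation of a name in $\IS_\alpha$) and is hereditarily $\cF_\alpha$-symmetric as a $\QQ_\alpha$-name in $V[G]$; the individual pieces $\widetilde{\dot y}$ are in $\IS_\alpha^G$ by construction, but we need a uniform subgroup of $\sG_\alpha^G$ stabilising the entire assembled name $\dot x_q$. Here I would use $(\dot H_\alpha^{\tup{p_*,\dot q_*}})^G\in\sF_\alpha^G$ as the stabiliser: for $\sigma$ in that group, an application of \autoref{lemma:factoring and aut} converts the fact that the pair $\tup{\id,\sigma}$ extends to an automorphism in $\vec H^{\tup{p_*,\dot q_*}}(\tup{p_*,\dot q_*})$ fixing $\dot x$ into the statement that $\sigma$ fixes $\dot x_q$ in $V[G]$. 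The corresponding $\PP_\alpha$-name for $\dot x_q$ can be written down in $V$ as a mixture over the pre-dense set supplied by the induction hypothesis and $D_0$, and closure of $\IS_\alpha$ under mixing (\autoref{thm:IS-names are closed under mixing}) then places it inside $\IS_\alpha$; hereditary symmetry is inherited from the $\widetilde{\dot y}$'s by the inductive definition.
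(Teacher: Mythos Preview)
Your overall architecture is reasonable, but there is a genuine gap at the mixing step for the $\widetilde{\dot y}$'s. You write that ``the single-step analogue of \autoref{thm:IS-names are closed under mixing}'' lets you combine the $\dot y_{q'}$ into a single \emph{hereditarily symmetric} $\QQ_\alpha$-name $\widetilde{\dot y}\in\IS_\alpha^G$. This does not follow: mixing $\HS$-names over an antichain does \emph{not} produce an $\HS$-name in general (indeed the paper remarks, right after this lemma, that $\IS$ is precisely the closure of $\HS$ under mixing --- so the closure is strictly larger). The antichain $D_{\dot y}$ you mix over lives in $V[G]$, not in $\IS_\alpha^G$, and the stabilisers of the various $\dot y_{q'}$ need not intersect in anything large; so $\widetilde{\dot y}$ need not be hereditarily $\sF_\alpha$-symmetric, and it need not even lie in $\IS_\alpha^G$. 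Once the $\widetilde{\dot y}$'s fail to be $\HS$, your $\dot x_q$ cannot be hereditarily symmetric either, and the argument collapses.

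There is a second, related issue in your symmetry check for $\dot x_q$. From $\tup{p_*,\dot q_*}\forces\gaut{\dot\sigma}\dot x=\dot x$ and \autoref{lemma:factoring and aut} you only obtain that $q$ \emph{forces} $\sigma[\dot x]^G=[\dot x]^G$; you do not obtain literal equality of names $\sigma\dot x_q=\dot x_q$, which is what ``$\dot x_q$ is $\sF_\alpha$-symmetric'' demands. The paper handles both problems at once with a single idea you are missing: rather than assembling $\dot x_q$ from inductively-obtained pieces, it \emph{symmetrises} $\dot x$ below $p^\smallfrown\dot q$ by hand, defining
\[\dot u=\bigl\{\tup{\gaut{\vec\pi}r,\gaut{\vec\pi}\dot y}\ \big|\ r\leq p^\smallfrown\dot q,\ r\forces\dot y\in\dot x,\ \gaut{\vec\pi}\in\vec H_p(p)\bigr\},\]
so that invariance under $\vec H_p(p)$ is built into the name. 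One then checks separately that $p^\smallfrown\dot q\forces\dot u=\dot x$, that $[\dot u]\in\IS_\alpha$, and that $1\forces^\IS[\dot u]\in\HS^\bullet_{\sF_\alpha}$; the induction hypothesis is used only to normalise the names appearing in $\dot x$ beforehand, not to supply pieces to be mixed.
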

An easy corollary of this lemma is that if we take a one-step symmetric iteration, namely a symmetric extension, then $\IS$ the closure of $\HS$ under mixing.
\begin{proof}
To ease the reading we omit $\alpha$ from the subscript wherever it appears. The proof is by induction on the rank of $\dot x$. Let $\dot x\in\IS_{\alpha+1}$. We may assume the following,
\begin{enumerate}
\item Every condition $p$ which appears in $\dot x$ is such that $p\restriction\alpha\forces p(\alpha)=\dot q$ for some $\dot q$ which appears in $\dot\QQ$. 
\item Every $\dot y$ which appears in $\dot x$ is such that 
\begin{enumerate}
\item $[\dot y]\in\IS_\alpha$, and
\item $1\forces^\IS[\dot y]\in\HS_{\cF}^\bullet$.
\end{enumerate}
\end{enumerate}
If $\dot x$ does not satisfy these properties, we can replace it by such a name which is guaranteed to be equal in $V[G]$ to $\dot x$. If we had to replace the name it is possible that we changed its rank too, but this turns to have no consequence on the rest of the proof.\footnote{It is also possible to prove that we can always find a suitable replacement name which does not increase the rank, but as said, it is of no consequence.}

Let $p^\smallfrown\dot q$ be a condition which is tenacious in $\PP\ast\dot\QQ$ as witnessed by an excellent support $\vec H_p$, $p^\smallfrown\dot q$ lies in the pre-dense set witnessing that $\dot x\in\IS_{\alpha+1}$, and $p\in G$. We can even require that $p$ is in the pre-dense set witnessing the fact $\dot q$ is hereditarily $\cF$-respected with $\vec H\restriction\alpha$ as an $\cF$-support witnessing that. Define the following $\PP\ast\dot\QQ$-name:
\[\dot u=\left\{\tup{\gaut{\vec\pi} r,\gaut{\vec\pi}\dot y}\middd\begin{array}{l}r\leq_{\alpha+1} p^\smallfrown\dot q\\\exists s:r\leq_{\alpha+1}s,\text{ and }\tup{s,\dot y}\in\dot x\\ \gaut{\vec\pi}\in\vec H_p(p)\end{array}\right\}.\]
It might be curious why we wrote $\vec H_p(p)$ rather than $\vec H_p(p^\smallfrown\dot q)$. But due to the excellence of $\vec H_p$, no names involved in $\vec H_p$ are $\PP\ast\dot\QQ$-names; they are all $\PP$-names. So using $p$ suffices for this matter.
To complete the proof we need to prove these four things,
\begin{enumerate}\renewcommand{\theenumi}{\normalfont\textbf{\Alph{enumi}}}
\item $p^\smallfrown\dot q\forces_{\alpha+1}\dot u=\dot x$, or equivalently $p\forces``\dot q\forces_{\dot\QQ}[\dot u]=[\dot x]"$.
\item $\dot u\in\IS_{\alpha+1}$.
\item $[\dot u]\in\IS$.
\item $1\forces^\IS[\dot u]\in\HS_{\sF}^\bullet$.
\end{enumerate}
To prove (\textbf A), the first inclusion $p^\smallfrown\dot q\forces_{\alpha+1}\dot x\subseteq\dot u$ is trivial, simply take $\vec\pi$ to be the identity. In the other direction, suppose that $s\forces_{\alpha+1}\dot t\in\dot u$, then we may assume without loss of generality that $\tup{s,\dot t}\in\dot u$. By definition, $\tup{s,\dot t}=\tup{\gaut{\vec\pi}r,\gaut{\vec\pi}\dot y}$ such that $r\forces_{\alpha+1}\dot y\in\dot x$ and $r\leq_{\alpha+1}p^\smallfrown\dot q$. However, $\gaut{\vec\pi}(p^\smallfrown\dot q)=p^\smallfrown\dot q$ by the tenacity and choice of support $\vec H_p$, and therefore $s\leq_{\alpha+1}p^\smallfrown\dot q$, so $s\forces_{\alpha+1}\gaut{\vec\pi}\dot x=\dot x$ hence \[r\forces_{\alpha+1}\dot y\in\dot x\iff s\forces_{\alpha+1}\dot t\in\dot x.\]

The fact that $\dot u\in\IS_{\alpha+1}$ follows now immediately from the mixing lemma for $\IS$-names, since $1_{\alpha+1}\forces_{\alpha+1}\dot u\in\{\dot x,\check\varnothing\}^\bullet$.

Next we prove that $[\dot u]\in\IS$. We assumed that if $\dot y$ appears in $\dot x$, then $[\dot y]\in\IS$, and therefore it is enough to prove that $[\dot u]$ is $\cF$-respected in order to show it is in $\IS$. Let $D\subseteq\PP$ be a maximal antichain with $p\in D$ and every $p'\in D$ is tenacious with support $\vec E_{p'}$ and $\vec E_p=\vec H_p\restriction\alpha$. We claim that $D$ witnesses $[\dot u]\in\IS$. First, if $p'\in D\setminus\{p\}$, then $p'\forces[\dot u]=\check\varnothing$ so there is nothing to prove there. If $p\forces\vec\pi\in\vec E_p$, then $\gaut{\vec\pi^\smallfrown\id^\bullet}\in H_p(p)$ (as an automorphism of $\PP\ast\dot\QQ$), and the result follows from \autoref{lemma:factoring and aut}.

To prove (\textbf D), it is again enough to work below $p$, as any incompatible condition forces that $[\dot u]$ will be the empty set which is always hereditarily symmetric. Let $\dot H=\dot H_\alpha$. If $p'\leq p$, and $p'\forces\dot\sigma\in\dot H$, then there is some $\dot\pi_\alpha$ such that $p\forces\dot\pi_\alpha\in\dot H$ and $p'\forces\dot\pi_\alpha=\dot\sigma$. Repeating the last argument, this time taking $\gaut{\dot\sigma}$ as our automorphism we obtain that $p\forces\dot\sigma[\dot u]=[\dot u]$ as wanted.

The above construction happens below a specific $q$ in $\QQ$, so to finish the proof we need to find some maximal antichain $D\in V[G]$  where for every $q\in D$ we can find such $\dot x_q$. Of course, given $q$ for which we can find such a name, $[\dot u]$ as defined above works as $\dot x_q$. Moreover, there is a dense open set of $q\in\QQ$ for which the above construction works, so we can refine them to a maximal antichain $D$ as wanted.
\end{proof}

This gives us the first part of the factorization. Namely, if $\dot x\in\IS_{\alpha+1}$, then it is a mixing of names which project to $\IS_\alpha$ and $\HS_{\sF_\alpha}^\bullet$. The second factorization lemma is the converse of that. Luckily, here we can use mixing to argue for a slightly weaker statement.
\begin{lemma}[The Second Factorization Lemma]
Let $\dot x$ be a $\PP_{\alpha+1}$-name such that $[\dot x]\in\IS_\alpha$ and $1_\alpha\forces^\IS_\alpha[\dot x]\in\HS_{\sF_\alpha}^\bullet$. Then $\dot x\in\IS_{\alpha+1}$.
\end{lemma}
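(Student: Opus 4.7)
The plan is to argue by induction on $\rank(\dot x)$. By the inductive hypothesis, every $\dot y$ appearing in $\dot x$ already lies in $\IS_{\alpha+1}$, so the task reduces to producing a single $\cF_{\alpha+1}$-support and an accompanying pre-dense set witnessing that $\dot x$ itself is $\cF_{\alpha+1}$-respected.

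First I would extract the data hidden in the two hypotheses. From $[\dot x]\in\IS_\alpha$ I obtain an $\cF_\alpha$-support $\vec E$ and a pre-dense $D_0\subseteq\PP_\alpha$ witnessing this, refined via \autoref{lemma:pre-dense refinement} so that each $p\in D_0$ is tenacious and $\vec E$ (or a subsupport thereof) witnesses simultaneously its tenacity and the respectedness of $[\dot x]$. The statement $1_\alpha\forces_\alpha^\IS[\dot x]\in\HS_{\sF_\alpha}^\bullet$ tells me that in the intermediate extension there is a subgroup of $\dot\sG_\alpha$, lying in $\dot\sF_\alpha$, which fixes $[\dot x]$; because $\dot\sF_\alpha$ is hereditarily $\cF_\alpha$-respected and $\IS_\alpha$-names are closed under mixing (\autoref{thm:IS-names are closed under mixing}), I can choose an $\IS_\alpha$-name $\dot H_\alpha$ with $1_\alpha\forces_\alpha^\IS\dot H_\alpha\in\dot\sF_\alpha$ and $1_\alpha\forces_\alpha^\IS(\forall\dot\sigma\in\dot H_\alpha)\,\dot\sigma[\dot x]=[\dot x]$.

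The support I propose is $\vec H=\vec E{}^{\smallfrown}\dot H_\alpha$, and the pre-dense set is $D=\{\tup{p,1_{\dot\QQ_\alpha}}\mid p\in D_0\}$. Given $\tup{p,1}\in D$ and $\gaut{\vec\pi}\in\vec H(\tup{p,1})$, I decompose $\gaut{\vec\pi}=\gaut{\vec\pi\restriction\alpha}\gaut{\dot\pi_\alpha}$ with $p\forces_\alpha\vec\pi\restriction\alpha\in\vec E$ and $p\forces_\alpha\dot\pi_\alpha\in\dot H_\alpha$. Applying \autoref{lemma:factoring and aut} twice gives
\[
\bigl[\gaut{\vec\pi}\dot x\bigr]=\gaut{\vec\pi\restriction\alpha}\bigl(\dot\pi_\alpha[\dot x]\bigr),
\]
and the choices of $\vec E$ and $\dot H_\alpha$ force the right-hand side to equal $[\dot x]$ below $p$. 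Then \autoref{lemma:factoring and equality} upgrades $p\forces_\alpha[\gaut{\vec\pi}\dot x]=[\dot x]$ into $\tup{p,1}\forces_{\alpha+1}\gaut{\vec\pi}\dot x=\dot x$, which is exactly what is needed for $\vec H$ to witness the respectedness of $\dot x$; a routine induction extends this from generators to arbitrary elements of $\vec H(\tup{p,1})$.

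The chief obstacle, as I anticipate it, is bookkeeping rather than algebra: converting the $\forces^\IS$-existence of a stabilizing subgroup into an explicit $\PP_\alpha$-name $\dot H_\alpha$ that is itself in $\IS_\alpha$, so that $\vec E{}^{\smallfrown}\dot H_\alpha$ is truly an $\cF_{\alpha+1}$-support rather than merely a sequence in the intermediate extension. This is precisely where one leans on the hereditary respectedness of $\dot\sF_\alpha$, the Forcing Theorem for $\forces^\IS$, and the mixing-closure of $\IS_\alpha$; once $\dot H_\alpha$ is in hand, the two factorization lemmas together with the Symmetry Lemma make the rest of the argument essentially mechanical.
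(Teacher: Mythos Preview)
Your proposal is correct and follows essentially the same route as the paper: induction on rank, use the $\IS_\alpha$-data for $[\dot x]$ to get a support on the first $\alpha$ coordinates, append the $\dot\sF_\alpha$-witness $\dot H_\alpha$ for the last coordinate, and verify via \autoref{lemma:factoring and aut} that the resulting $\cF_{\alpha+1}$-support stabilizes $\dot x$. One small remark: you invoke \autoref{lemma:factoring and equality} in the converse direction (from $p\forces_\alpha[\gaut{\vec\pi}\dot x]=[\dot x]$ to $\tup{p,1}\forces_{\alpha+1}\gaut{\vec\pi}\dot x=\dot x$), which is not the direction stated there---but that converse is immediate from the definition of $[\,\cdot\,]$, so this is harmless.
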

\begin{proof}
We prove this by induction on the rank of $\dot x$; it suffices to show that $\dot x$ is $\cF_{\alpha+1}$-respected in order to conclude that $\dot x\in\IS_{\alpha+1}$. Let $D\subseteq\PP_\alpha$ be a pre-dense set which witnesses that $[\dot x]\in\IS_\alpha$, and let $\dot H\in\IS_\alpha$ be a name such that for every $p\in D$, $p\forces_\alpha^\IS\forall\dot\pi\in\dot H:\dot\pi[\dot x]=[\dot x]$. We may also assume that $D$ is a pre-dense set witnessing that $\dot H\in\IS_\alpha$.

We claim that $D$ is also a pre-dense set witnessing that $\dot x\in\IS_{\alpha+1}$. Given $p\in D$, let $\vec H_p$ be a support such that:
\begin{enumerate}
\item $\vec H_p\restriction\alpha$ witnesses that $p$ is tenacious and that $[\dot x],\dot H$ are $\cF_\alpha$-respected.
\item $\vec H_{p,\alpha}=\dot H$.
\end{enumerate}
Then $\vec H_p$ is excellent, and it witnesses that $p$ is $\cF_{\alpha+1}$-tenacious. Suppose that $p\forces_{\alpha+1}\vec\pi\in\vec H_p$, then $p\forces_\alpha[\gaut{\vec\pi}\dot x]=\gaut{\vec\pi\restriction\alpha}\dot\pi_\alpha([\dot x])$. However, $\gaut{\vec\pi\restriction\alpha}p\forces\gaut{\vec\pi\restriction\alpha}\dot\pi_\alpha\in\dot H$, so the conclusion follows.
\end{proof}
\begin{remark}
The above proof may seem a bit odd. We skipped entirely any need to extend the conditions in $D$ to include information from $\dot\QQ_\alpha$. However, the assumption was that $1_\alpha\forces_\alpha[\dot x]\in\HS_{\sF_\alpha}^\bullet$. So we could have deduced the information on $\dot H_\alpha$ already from conditions in $\PP_\alpha$. As $\IS$ is the closure of $\HS$ under mixing, if we had allowed $[\dot x]$ to be equal to a mixing of names from $\HS_{\sF_\alpha}^\bullet$, then we would have needed to include more information from $\dot\QQ_\alpha$.
\end{remark}
\subsection{Two easy theorems}
We finish this section with two easy theorems which are corollaries from the above lemmas. And these tell us that iterating symmetric extensions is indeed the same as doing a symmetric iteration. To simplify the statements of the theorems, let us set a common context here. $\tup{\dot\QQ_\alpha,\dot\sG_\alpha,\dot\sF_\alpha\mid\alpha<\delta}$ is a symmetric iteration of length $\delta$, and let $G$ be a $V$-generic filter for $\PP_\delta$. 

\begin{theorem}
For every $\alpha<\delta$, $\IS_{\alpha+1}^{G\restriction\alpha+1}$ is a symmetric extension of $\IS_\alpha^{G\restriction\alpha}$ by the symmetric system $\tup{\dot\QQ_\alpha^{G\restriction\alpha},\dot\sG_\alpha^{G\restriction\alpha},\dot\sF_\alpha^{G\restriction\alpha}}$ with the $\IS_\alpha^{G\restriction\alpha}$-generic filter $G(\alpha)$.\qed
\end{theorem}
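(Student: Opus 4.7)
The plan is to prove the equality $\IS_{\alpha+1}^{G\restriction\alpha+1}=N$, where $N$ denotes the symmetric extension of $M:=\IS_\alpha^{G\restriction\alpha}$ by the given symmetric system with generic $G(\alpha)$, by verifying the two inclusions separately with the First and Second Factorization Lemmas doing essentially all the work. The preliminary observations are that (i) the symmetric system $\tup{\dot\QQ_\alpha^{G\restriction\alpha},\dot\sG_\alpha^{G\restriction\alpha},\dot\sF_\alpha^{G\restriction\alpha}}$ genuinely lives in $M$, since by the definition of a symmetric iteration the names $\dot\QQ_\alpha,\dot\sG_\alpha,\dot\sF_\alpha$ are hereditarily $\cF_\alpha$-respected; and (ii) under the factorization $\PP_{\alpha+1}\cong\PP_\alpha\ast\dot\QQ_\alpha$, the coordinate $G(\alpha)$ is $V[G\restriction\alpha]$-generic for $\dot\QQ_\alpha^{G\restriction\alpha}$, and hence $M$-generic since $M\subseteq V[G\restriction\alpha]$. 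So $N$ is a bona fide symmetric extension of $M$.

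For the inclusion $\IS_{\alpha+1}^{G\restriction\alpha+1}\subseteq N$, I would take $\dot x\in\IS_{\alpha+1}$ and apply the First Factorization Lemma in $V[G\restriction\alpha]$: it produces a maximal antichain $D\subseteq\dot\QQ_\alpha^{G\restriction\alpha}$ together with hereditarily $\sF_\alpha^{G\restriction\alpha}$-symmetric names $\dot x_q\in M$ for each $q\in D$ such that $q\forces[\dot x]^{G\restriction\alpha}=\dot x_q$. Since $G(\alpha)$ is $V[G\restriction\alpha]$-generic, it meets $D$ at some unique $q$, and then
\[\dot x^{G\restriction\alpha+1}=\bigl([\dot x]^{G\restriction\alpha}\bigr)^{G(\alpha)}=\dot x_q^{G(\alpha)}\in N.\]

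For the reverse inclusion $N\subseteq\IS_{\alpha+1}^{G\restriction\alpha+1}$, take $\tau=\sigma^{G(\alpha)}\in N$ with $\sigma\in M$ a hereditarily $\sF_\alpha^{G\restriction\alpha}$-symmetric $\dot\QQ_\alpha^{G\restriction\alpha}$-name. Since $\sigma\in M$, choose $\dot\sigma\in\IS_\alpha$ with $\dot\sigma^{G\restriction\alpha}=\sigma$. The statement ``$\dot\sigma\in\HS_{\sF_\alpha}^\bullet$'' holds in $M$, so by the Forcing Theorem for $\forces^\IS$ some condition in $G\restriction\alpha$ forces this; by mixing $\dot\sigma$ against $\check\varnothing$ on the complementary pre-dense set (and invoking \autoref{thm:IS-names are closed under mixing} to stay in $\IS_\alpha$), I can arrange that $1_\alpha\forces^\IS_\alpha\dot\sigma\in\HS_{\sF_\alpha}^\bullet$ without changing the interpretation by $G\restriction\alpha$. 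Now let $\dot x$ be the $\PP_{\alpha+1}$-name with $[\dot x]=\dot\sigma$; by the Second Factorization Lemma $\dot x\in\IS_{\alpha+1}$, and $\dot x^{G\restriction\alpha+1}=\sigma^{G(\alpha)}=\tau$.

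The only step that is not completely mechanical is the mixing step at the end of the second inclusion: I need to turn a statement ``forced by some $p\in G\restriction\alpha$'' into one ``forced by $1_\alpha$'' while keeping the name inside $\IS_\alpha$. This is where I expect to spend a little care — but it is exactly the kind of manipulation that mixing lemma for $\IS$-names was designed for, so no new ideas should be required. Everything else is a direct translation of the factorization lemmas into statements about the interpretations by $G\restriction\alpha$ and $G(\alpha)$.
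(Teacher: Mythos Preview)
Your proposal is correct and matches the paper's intended approach exactly: the paper presents this theorem with a bare \qed, having set it up explicitly as an ``easy corollary'' of the two Factorization Lemmas, and your argument simply spells out that corollary---the First Factorization Lemma for $\IS_{\alpha+1}^{G\restriction\alpha+1}\subseteq N$ and the Second for $N\subseteq\IS_{\alpha+1}^{G\restriction\alpha+1}$, with the mixing step handled via \autoref{thm:IS-names are closed under mixing} precisely as you describe.
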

\begin{theorem}
Suppose that $\tup{\QQ_\delta,\sG_\delta,\sF_\delta}\in\IS_\delta^G$ is a symmetric system and $H$ is a $V[G]$-generic filter for $\QQ_\delta$. Then there are names $\dot\QQ_\delta,\dot\sG_\delta$ and $\dot\sF_\delta$ in $\IS_\delta$, and a support $\vec K$ for all three of them, such that by shrinking $\dot\sG_\alpha$ to $\dot K_\alpha$ and restricting the filters $\dot\sF_\alpha$, we can obtain a symmetric iteration of length $\delta+1$, and $G\ast H$ is a $V$-generic filter for that iteration.\qed
\end{theorem}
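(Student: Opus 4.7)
The plan is to build a symmetric iteration of length $\delta+1$ whose first $\delta$ steps closely mimic the original iteration, but with each automorphism group shrunk to a subgroup that uniformly respects the new $\delta$-th step data, and whose last step is the given system $\tup{\dot\QQ_\delta,\dot\sG_\delta,\dot\sF_\delta}$. The need to shrink is essential: a name in $\IS_\delta$ is only guaranteed to be respected by its support on a pre-dense set, not by every automorphism of $\cG_\delta$, whereas clause (4) of the definition of a symmetric iteration demands uniform respect.

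First, fix names $\dot\QQ_\delta,\dot\sG_\delta,\dot\sF_\delta\in\IS_\delta$ for the three given objects. Using \autoref{cor:pairing} (and \autoref{lemma:pre-dense refinement}) we may choose one common pre-dense set $D$ of tenacious conditions and one excellent support $\vec K$ that simultaneously witnesses $\cF_\delta$-respectedness of all three names below every $p\in D$. This $\vec K$ is the support claimed in the statement.

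Second, define the new symmetric iteration: for $\alpha<\delta$ keep $\dot\QQ_\alpha$ unchanged, replace $\dot\sG_\alpha$ by $\dot K_\alpha$ (a $\PP_\alpha$-name for a subgroup of $\dot\sG_\alpha$, by excellency), and replace $\dot\sF_\alpha$ by the normal filter base generated by $\{\dot H\cap\dot K_\alpha\mid\dot H\in\dot\sF_\alpha\}^\bullet$; for the new $\delta$-th coordinate use $\tup{\dot\QQ_\delta,\dot\sG_\delta,\dot\sF_\delta}$, which by the Hayut--Karagila result discussed in \autoref{section:appendix} we may assume is presented tenaciously. By induction on $\alpha\leq\delta$ we check the four clauses of the definition: (1) the forcing iteration $\PP_\alpha$ is unchanged, since it does not depend on $\dot\sG_\alpha$ or $\dot\sF_\alpha$; (2) the new $\cG_\alpha'$ is the generic semi-direct product of the $\dot K_\beta$, a subgroup of the old $\cG_\alpha$; (3) the names at the $\delta$-th step are $\cF_\delta'$-respected, by construction of $\vec K$, because any $\vec\pi$ with $p\forces\vec\pi\in\vec K$ is exactly one for which $p\forces\gaut{\vec\pi}$ fixes the three names; (4) every element of $\cG_\alpha'$ at each earlier stage respects the new $\dot\QQ_\beta,\dot K_\beta,\dot\sF_\beta'$, which follows because $\vec K$ was chosen as a common support, together with \autoref{prop:correctness} and \autoref{prop:preserving names} (tenacity of the conditions in $D$ lets us pull a local equality to a global fixing, exactly as in the proof of \autoref{thm:IS-names are closed under aut}).

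Third, genericity of $G\ast H$ for the new $\PP_{\delta+1}$ is immediate: the underlying forcing poset has not changed, $G$ is still $V$-generic for $\PP_\delta$, and $H$ is $V[G]$-generic for $\QQ_\delta=\dot\QQ_\delta^G$, hence in particular $\IS_\delta^G$-generic. The main obstacle is clause (4)---turning respect on a pre-dense set into uniform respect by every automorphism of the new $\cG_\delta'$. This is exactly where shrinking $\dot\sG_\alpha$ to $\dot K_\alpha$ buys us what we need: by definition of $\vec H(p)$ and by tenacity, any $\vec\pi$ with values in $\vec K$ can be represented so that $\gaut{\vec\pi}p=p$ for every $p\in D$, and since $D$ is pre-dense this globalises via \autoref{prop:generic equality of aut} to the statement that $\gaut{\vec\pi}$ fixes $\dot\QQ_\delta,\dot\sG_\delta,\dot\sF_\delta$ outright. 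A similar argument, applied inductively to earlier stages, shows the intermediate filters $\dot\sF_\alpha'$ are themselves hereditarily respected, closing the induction.
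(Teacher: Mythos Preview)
The paper gives no proof here---the theorem carries a bare \qed and is announced as an ``easy corollary'' of the two factorization lemmas---so your write-up is supplying what the author omitted, and the overall strategy (pick $\IS_\delta$-names for the new system, find a common support $\vec K$, shrink the earlier groups to the components of $\vec K$ so that clause~(4) of the definition holds at stage $\delta$) is exactly the intended one.

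There is, however, a genuine gap in your second paragraph. You assert that \autoref{cor:pairing} and \autoref{lemma:pre-dense refinement} yield a \emph{single excellent} support $\vec K$ working simultaneously below every $p\in D$. They do not: what those results produce is a common pre-dense $D$ together with an excellent $\vec H_p$ for each $p\in D$, and the $\vec H_p$ (in particular the finite sets $C(\vec H_p)$) may vary with $p$. The single support coming directly from the definition of ``$\cF_\delta$-respected'' does work uniformly across $D$, but it is in general not excellent, so its components are $\PP_\delta$-names rather than $\PP_\alpha$-names---and then ``replace $\dot\sG_\alpha$ by $\dot K_\alpha$'' is ill-formed, since the $\alpha$th group in a symmetric iteration must be a $\PP_\alpha$-name. (The theorem statement itself says only ``a support $\vec K$'', not an excellent one, so this tension is already latent in the paper.) The cleanest repair uses the genericity of $G$: since $D$ is pre-dense, fix $p_0\in D\cap G$ and take $\vec K=\vec H_{p_0}$, which \emph{is} excellent. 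Arrange the names $\dot\QQ_\delta,\dot\sG_\delta,\dot\sF_\delta$ and each $\dot K_\alpha$ so that conditions incompatible with $p_0$ force them to be trivial; then your verification of clauses (1)--(4) goes through below $p_0$, and since $p_0\in G$ this suffices for $G\ast H$ to be $V$-generic for the resulting iteration. With this adjustment your argument is complete.
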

Note that the when shrinking the $\dot\sG$'s, we might have to repeat the shrinking process, however the maximal coordinate where shrinking is necessarily will form a strictly decreasing sequence of ordinals, so it is guaranteed to stabilize in finitely many steps. It is also worth noting that this tells us that if we kept on going without shrinking the groups at all, then at the limit step we have a valid symmetric iteration, although the choice of generic filters might not generate a generic filter to this iteration.

One last thing to ponder about is why did we require that $G\ast H$ is $V$-generic for the iteration? Well. We did so because we wanted to ensure that we catch the antichains of $\dot\QQ_\delta^G$ which are not in $\IS_\delta^G$. But we might have had $\IS_\delta^G$-generic filters for $\dot\QQ_\delta^G$ which are not $V[G]$-generic themselves. What if we wanted to use one of those instead? The next section discusses a partial solution to this problem.

\section{The generics problem and productive iterations}\label{section:productive}
\subsection{Two-step motivation}\label{subsection:cohen example}
Let us start with a two-step iteration as a motivating example, our second step will involve no symmetries so we omit that part. The first step is going to be the standard Cohen model, like in \autoref{ex:cohen}. Force with $\PP=\Add(\omega,\omega)$, so a condition $\PP$ is a partial function $p\colon\omega\times\omega\to2$ with finite domain; $\sG$ is going to be the finitary permutations of $\omega$ where $\pi p(\pi n,m)=p(n,m)$ is the action on $\PP$; finally, $\sF$ is generated by groups of the form $\fix(E)=\{\pi\in\sG\mid\pi\restriction E=\id\}$ for $E\subseteq\omega$.

Let $\dot a_n$ denote the $\PP$-name for the $n$th Cohen real, i.e.\ $\dot a_n=\{\tup{p,\check m}\mid p(n,m)=1\}$, and let $\dot A$ denote the name $\{\dot a_n\mid n<\omega\}^\bullet$. Standard arguments show that $\dot A\in\HS_\sF$ and that $1_\PP\forces^\HS_\PP``\dot A$ is Dedekind-finite''.

Now we want $\dot\QQ$ to introduce a well-order of $\dot A$, specifically, of type $\omega$. The obvious way of doing so would be by considering finite and injective partial functions from $\dot A$ to $\omega$. But despite the fact that $\dot A$ itself will forget its enumeration in the symmetric extension, we can still utilize the fact that the name itself is countable in $V$ to give a very canonical description of a condition in $\dot\QQ$.

For a finite partial function $f\colon\omega\times\omega\to2$, so $f\in\PP$, let $\dot q_f$ be the name \[\{\tup{\tup{\dot a_n,\check m}^\bullet,\check f(n,m)}^\bullet\mid\tup{n,m}\in\dom f\}^\bullet.\]
Now define $\dot\QQ=\{\dot q_f\mid f\in\PP\}^\bullet$ and the order to be just reverse inclusion. It is not hard to check that $\dot\QQ$ is in fact hereditarily $\sF$-symmetric, and that $\PP\ast\dot\QQ$ does in fact satisfy the definition for a symmetric iteration of length $2$. Moreover, in the generic extension by $\PP$, it is very easy to see why $\dot\QQ$ is just isomorphic to $\PP$. But in the intermediate model there is an injection from $\dot A$ to $\dot\QQ$, whereas $\PP$ is countable, so the isomorphism itself is not symmetric.

One can show, next, that if $G$ is the $V$-generic filter used for $\PP$, then there is some $H\in V[G]$ such that $H$ is $\HS_\sF^G$-generic for $\dot\QQ^G$, and $\HS_\sF^G[H]=V[G]$. But of course, $G\ast H$ is not $V$-generic for $\PP\ast\dot\QQ$.

\subsection{So how do we access more generics?}
We want to find some condition on the iteration so we can use filters which are generic over the intermediate model we have at each step, rather than have genericity for the entire iteration over the ground model; and we often times it will be easy when the iteration is presented in a way where the iterands are ``almost isomorphic to forcings in the ground model''.

Our first wish is quite hard to satisfy. We used mixing in a very substantial way when defining the iterated automorphisms,\footnote{See \autoref{prop:aut-ext}.} and we saw in \autoref{section:justification} that mixing is also creating names which are themselves not stable under automorphisms directly (but they are eventually become equal to names which are stable enough). So just using filters which are not generic for the entire iterations will not be enough to interpret the names in $\IS$. The first place to look for a solution is Kunen iterations, where mixing was not involved in the definition of the forcing itself.

\subsection{Kunen iterations and partial automorphisms}\label{subsection:Kunen}
We defined the Kunen preordered set for an iteration $\PP\ast\dot\QQ$ as follows: the conditions are pairs $\tup{p,\dot q}$ where $p\forces_\PP\dot q\in\dot\QQ$, and $\dot q$ appears in $\dot\QQ$. Suppose that $\dot\sigma$ is a name such that $1_\PP\forces\dot\sigma\in\aut(\dot\QQ)$. In order to extend $\dot\sigma$ to $\gaut{\dot\sigma}$ we used mixing in a significant way, so now we can only apply $\gaut{\dot\sigma}$ to a condition $\tup{p,\dot q}$ if $p\forces_\PP\dot\sigma(\dot q)=\dot q'$ and $\dot q'$ appears in $\dot\QQ$ (note that this condition implies that $p\forces_\PP\dot q'\in\dot\QQ$, making $\tup{p,\dot q'}$ a condition in $\PP\ast\dot\QQ$).

So our automorphisms become partial automorphisms. Given $\gaut{\vec\pi}$ we can only apply it to a condition $p$ if each initial segment has enough information about the action on the remainder of the condition. Similarly we lose the ability to mix over supports, we have to resort only to excellent supports. Everything becomes far more complicated to state, as we always have to keep track what sort of information our condition already decides about a given automorphism sequence.

Of course, this is not a mathematical problem. We only ever used finitely many automorphisms in each of the proofs, so we can always make finitely many extensions to decide the needed information, and then argue via density and genericity.

But what good is a hammer which is too heavy to use? Ideally, we would have worked out this entire method and then we could have danced at both weddings: work with the Jech iterations for simplicity, but when you need to pick generics switch to Kunen iterations and argue your point. However the increased complexity compelled us to choose a different route. 
\subsection{The common ground: productive iterations}
In the case of $\PP\ast\dot\QQ$ as defined in \autoref{subsection:cohen example}, we had a very canonical description of the forcing. It was a description that allowed for $\PP\ast\dot\QQ$ to be presented as a Kunen iteration without $1_\PP$ losing any significant information about conditions of $\dot\QQ$ or their order. We distill this idea in the following definition.

\begin{definition}
A symmetric iteration $\tup{\dot\QQ_\alpha,\dot\sG_\alpha,\dot\sF_\alpha\mid\alpha<\delta}$ is called a \textit{productive iteration} if the following conditions hold:
\begin{enumerate}
\item For every $\alpha$, $\dot\QQ_\alpha,\dot\sG_\alpha$ and $\dot\sF_\alpha$ are $\bullet$-names.
\item The conditions of $\PP_\delta$ are exactly those where for every $\alpha<\delta$, $p(\alpha)$ appears in $\dot\QQ_\alpha$, i.e., we can think about $\PP_\alpha$ as a Kunen iteration.
\item For every $\alpha$, $1_\alpha$ decides the statement $\dot\pi_\alpha=\dot\sigma_\alpha$ for every two names which appear in $\dot\sG_\alpha$.
\item For every $\alpha$, for every $\dot q$ appearing in $\dot\QQ_\alpha$ and every $\dot\sigma$ appearing in $\dot\sG_\alpha$, there is some $\dot q'$ appearing in $\dot\QQ_\alpha$, such that $1_\alpha\forces_\alpha\dot\sigma(\dot q)=\dot q'$.
\end{enumerate}
We modify the construction of $\cG_\delta$ by only allowing automorphism sequences $\vec\pi$ that each $\dot\pi_\alpha$ appears in $\dot\sG_\alpha$. And we also modify the definition of $\cF_\delta$-respected names by requiring the support to be excellent with the property that $1_\alpha$ decides $\vec\pi\in\vec H$ for every $\vec\pi$ and $\vec H$.
\end{definition}
The motivation is that we have symmetric systems where the forcing itself is ``almost'' a ground model forcing, and we can use the ground model's automorphism group and filter of subgroups to induce in a canonical way an ``almost isomorphic'' structure in the intermediate extension.
\smallskip

We added new restrictions that limit our access to mixing, which was very essential for the definition. So we need to investigate what remains of the general theory. It might seem that every proof which has an underlying appeal for mixing (which is almost all of them) is going to fail now. However, every use of mixing can be circumvented with the additional assumptions. Obviously, \autoref{thm:IS-names are closed under mixing} fails in this context, but we retain the corollary that automorphism sequences are in $\IS$---as we restrict ourselves to automorphism sequences as in the definition of the productive iteration.

It is important to point out that if we have a productive iteration and we compute $\IS$ as in the general theory (with mixing and all), then every $\dot x\in\IS$ will be generically equal to a name which is in the ``productive definition'' of $\IS$. The proof is by induction on the rank of $\dot x$, and is very similar to the previous proofs by induction on the rank of a name. So despite the additional requirements, we can jump between the general theory and the productive theory when it comes to working with productive iterations.

\begin{definition}
Suppose that $\PP_\delta$ is a productive iteration.
\begin{enumerate}
\item $D\subseteq\PP_\delta$ is a \textit{symmetrically dense} if it is a dense set and there is some $\cF_\delta$-support $\vec H$ such that whenever $p\in D$ and $p\forces_\delta\vec\pi\in\vec H$, then $\gaut{\vec\pi}p\in D$.
\item $G\subseteq\PP_\delta$ is a \textit{$V$-symmetrically generic filter} if it is a filter and for every symmetrically dense open $D\in V$, $D\cap G\neq\varnothing$.
\end{enumerate}
\end{definition}
\begin{remark}
The above definition is general, but applying it to the productive case, we note that our supports are all excellent, and since $1_\delta$ decides all the elements of $\vec H$, it follows that if $p\forces_\delta\vec\pi\in\vec H$, then $1_\delta\forces_\delta\vec\pi\in\vec H$. So in effect the definition simply states that $D$ is closed under the automorphisms from $\vec H$.
\end{remark}
We would like to be able and choose our generics pointwise, but this may still cause problems at limit steps. Symmetrically dense sets eliminate this problem for limit steps, while very easily allowing us to choose generics if we only want to extend the iteration by finitely many steps. And we have a criterion for when such extensions can proceed through limit steps.
\begin{lemma}
Let $\varphi(x)$ be a statement in the language of forcing and $\dot x\in\IS_\delta$. Then the decision set $D=\{p\in\PP_\delta\mid p\forces^\IS_\delta\varphi(\dot x)\text{ or }p\forces^\IS_\delta\lnot\varphi(\dot x)\}$ contains a symmetrically dense open subset.
\end{lemma}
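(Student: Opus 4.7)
I plan to refine the decision set $D$ twice: first by intersecting with the dense open set of conditions lying below a pre-dense witness for $\dot x\in\IS_\delta$, and then by closing under the group action of the support of $\dot x$. The first refinement ensures every condition in it has uniform control over $\gaut{\vec\sigma}\dot x=\dot x$, and the second produces the symmetric density.

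First I would verify that $D$ is already dense and open. Openness is immediate from monotonicity of $\forces^\IS$. For density, given any $p\in\PP_\delta$, the properties of $\forces^\IS$ imply that either $p\forces^\IS\varphi(\dot x)$ or some $q\leq_\delta p$ satisfies $q\forces^\IS\lnot\varphi(\dot x)$; in either case we have an element of $D$ below $p$.

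Next I would fix an excellent support $\vec H$ and a pre-dense set $E\subseteq\PP_\delta$ witnessing $\dot x\in\IS_\delta$. In the productive setting, $1_\delta$ decides which automorphism sequences belong to $\vec H$, so $\vec H$ determines a fixed subgroup of $\cG_\delta$, and for every $q\in E$ and every $\vec\sigma$ with $1_\delta\forces_\delta\vec\sigma\in\vec H$ we have $q\forces_\delta\gaut{\vec\sigma}\dot x=\dot x$. Let $E^\downarrow$ denote the dense open set of conditions lying below some element of $E$, set $D_0:=D\cap E^\downarrow$, and define
\[D':=\{\gaut{\vec\pi}p\mid p\in D_0,\ 1_\delta\forces_\delta\vec\pi\in\vec H\}.\]
Then $D'\supseteq D_0$ is dense, is open (a strengthening $r\leq_\delta\gaut{\vec\pi}p$ transports via $\gaut{\vec\pi^{-1}}$ to a strengthening of $p\in D_0$, and $D_0$ is open), and is closed under $\vec H$ by construction.

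The content of the argument is verifying $D'\subseteq D$. Given $p=\gaut{\vec\pi}p_0$ with $p_0\in D_0$, say $p_0\forces^\IS\varphi(\dot x)$ (the other case is analogous), the $\IS$-Symmetry Lemma yields $p\forces^\IS\varphi(\gaut{\vec\pi}\dot x)$. Since $p_0\leq_\delta q$ for some $q\in E$ and $\gaut{\vec\pi^{-1}}$ also lies in the group defined by $\vec H$, we have $p_0\forces_\delta\gaut{\vec\pi^{-1}}\dot x=\dot x$; applying the standard Symmetry Lemma with $\gaut{\vec\pi}$ converts this to $p\forces_\delta\dot x=\gaut{\vec\pi}\dot x$, whence the Substitution Lemma gives $p\forces^\IS\varphi(\dot x)$. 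The main obstacle is precisely this last calculation: one must begin the transport from the statement about $\gaut{\vec\pi^{-1}}\dot x$ rather than $\gaut{\vec\pi}\dot x$ at $p_0$, so that after Symmetry is applied the two automorphisms collapse to the identity at $p$, and this step requires $\vec H$ to be closed under inverses.
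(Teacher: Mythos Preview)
Your proof is correct and follows essentially the same route as the paper: fix an excellent support $\vec H$ and a pre-dense witness $E$ for $\dot x$, and use $p_0\forces_\delta\gaut{\vec\pi^{-1}}\dot x=\dot x$ together with the Symmetry and Substitution Lemmas to transport the decision to $\gaut{\vec\pi}p_0$. The only cosmetic difference is that the paper asserts $D\cap E^\downarrow$ is itself the symmetrically dense set, whereas you explicitly close it under $\vec H$ and verify the closure stays inside $D$; your formulation is slightly more careful, since it does not depend on $\gaut{\vec\pi}p_0$ landing back in $E^\downarrow$.
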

Of course, the lemma is formulated for a single-variable formulas for simplicity, but it holds for any formula.
\begin{proof}
Let $\vec H$ be an excellent support for $\dot x$, and fix a pre-dense set $E$ witnessing that $\dot x\in\IS$. If $p\forces_\delta^\IS\varphi(\dot x)$, $p\forces_\delta\vec\pi\in\vec H$, and there is some $q\in E$ such that $p\leq_\delta q$, then $p\forces_\delta\gaut{\vec\pi}\dot x=\gaut{\vec\pi^{-1}}\dot x=\dot x$. Therefore by the Symmetry Lemma, \[\gaut{\vec\pi}p\forces^\IS_\delta\varphi(\gaut{\vec\pi}\dot x)\land\gaut{\vec\pi}\dot x=\dot x.\] 

Therefore the set $\{p\in\PP_\delta\mid p\forces_\delta^\IS\varphi(\dot x)\lor p\forces^\IS_\delta\lnot\varphi(\dot x)\land\exists q\in E:p\leq_\delta q\}$ is a symmetrically dense open set.
\end{proof}
\begin{theorem}[The Productive Forcing Theorem for $\forces^\IS$]\label{thm:productive generics}
The following are equivalent for productive iterations:
\begin{enumerate}
\item $p\forces^\IS\varphi$.
\item For every $V$-symmetrically generic filter $G$ such that $p\in G$, $\IS^G\models\varphi$.
\item For every $V$-generic filter $G$ such that $p\in G$, $\IS^G\models\varphi$.
\end{enumerate}
\end{theorem}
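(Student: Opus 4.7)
The equivalence (1) $\Leftrightarrow$ (3) is the Forcing Theorem for $\forces^\IS$ already established at the end of \autoref{section:IS}, so the task reduces to inserting (2) into this chain.

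For (2) $\Rightarrow$ (3), note that every symmetrically dense open set is a dense open set, so every $V$-generic filter is automatically $V$-symmetrically generic. The universal quantifier in (2) therefore ranges over a wider class of filters than the one in (3), and (2) specializes to (3).

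For (1) $\Rightarrow$ (2), which is the new content, I proceed by induction on the complexity of $\varphi$, proving a truth-lemma converse for symmetrically generic filters along the way. The engine is the preceding lemma: the decision set for any formula with $\IS$-parameters contains a symmetrically dense open subset, so a $V$-symmetrically generic $G$ with $p \in G$ must meet it and produce some $q \leq p$ in $G$ deciding $\varphi$; consistency of $\forces^\IS$ forces this decision to be positive. Propositional connectives are routine, and the existential quantifier is handled by the defining clause of $\forces^\IS$ together with a further decision-set application to extract an $\IS$-witness visible to $G$.

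The main obstacle is the atomic base case $\dot x \in \dot y$, with $\dot x = \dot y$ reduced to two memberships via extensionality. The difficulty is that the natural set $F = \{q \leq p : \exists \tup{s,\dot w} \in \dot y,\ q \leq s,\ q \forces^\IS \dot x = \dot w\}$, which unpacks the meaning of $q \forces \dot x \in \dot y$, is only dense below $p$ and not obviously symmetrically dense: the pair $\tup{\gaut{\vec\pi}s,\gaut{\vec\pi}\dot w}$ produced by applying $\gaut{\vec\pi} \in \vec H$ lies in $\gaut{\vec\pi}\dot y$, which is only forced---not literally equal---to $\dot y$. My plan is to work below a tenacious $q \leq p$ with $q \in G$, fix via \autoref{cor:pairing} a common excellent support $\vec H$ witnessing tenacity of $q$ together with respectedness of both $\dot x$ and $\dot y$, and exploit $\gaut{\vec\pi}q = q$ and the Symmetry Lemma to show that every $r \in F$ has the property that $\gaut{\vec\pi}r$ again forces $\dot x \in \dot y$, and hence has a further extension in $F$. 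Passing to the open hull $F^* = \{t \leq q : \exists r \in F,\ t \leq r\}$ yields a symmetrically dense open set below $q$; once $G$ meets $F^*$, a second decision-set application to the atomic equality $\dot x = \dot w$ ranging over the $\IS$-candidates $\dot w$ appearing in $\dot y$ delivers an actual pair $\tup{s,\dot w} \in \dot y$ with $s \in G$ and $\dot w^G = \dot x^G$ by the induction hypothesis, yielding $\dot x^G \in \dot y^G$ as required.
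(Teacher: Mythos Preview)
Your overall architecture matches the paper's sketch exactly: the paper also cycles (2)$\Rightarrow$(3)$\Rightarrow$(1)$\Rightarrow$(2), dismisses the first two arrows in a line each, and says the third ``is similar to the usual proof of the forcing theorem, here relying on the lemma above.'' You are filling in that last sentence, and you correctly locate the real work in the atomic clause.

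There is, however, a gap in your atomic argument. You show that for $r\in F$ and $\gaut{\vec\pi}\in\vec H(q)$ the condition $\gaut{\vec\pi}r$ again forces $\dot x\in\dot y$, and therefore has a further \emph{extension} in $F$. From this you conclude that the open hull $F^*=\{t\leq q:\exists r\in F,\ t\leq r\}$ is closed under $\vec H$. But $F$ is already open (if $r\leq s$ with $\tup{s,\dot w}\in\dot y$ and $r\forces\dot x=\dot w$, then so does any $t\leq r$), so $F^*=F\cap\{\leq q\}$; and knowing that $\gaut{\vec\pi}r$ has an extension in $F$ is the wrong direction---you need $\gaut{\vec\pi}r$ to lie \emph{below} an element of $F$, not above one. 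The pair $\tup{\gaut{\vec\pi}s,\gaut{\vec\pi}\dot w}$ lives in $\gaut{\vec\pi}\dot y$, which is only forced equal to $\dot y$, so $\gaut{\vec\pi}r$ need not itself belong to $F$. Your ``second decision-set application'' does not rescue this: if $G$ has not met $F$ you have no specific $\tup{s,\dot w}\in\dot y$ with $s\in G$ to feed into it.

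One repair is to replace $F^*$ by the $\vec H$-saturation $\bigcup_{\vec\pi}\gaut{\vec\pi}F$, which \emph{is} symmetrically dense open; but then the witness you extract is $\tup{\gaut{\vec\pi}s,\gaut{\vec\pi}\dot w}\in\gaut{\vec\pi}\dot y$, and you must separately know $(\gaut{\vec\pi}\dot y)^G=\dot y^G$. That in turn is most cleanly handled by strengthening the induction to include, at each rank, the statement $(\gaut{\vec\pi}\dot x)^G=\dot x^G$ for $\vec\pi$ in the support of $\dot x$---a fact which in the productive setting follows from $1_\delta\forces\gaut{\vec\pi}\dot x=\dot x$ once the truth lemma for equality is available at that rank.
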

\begin{proof}[Sketch of Proof]
The implication from (2) to (3) is trivial; and the implication from (3) to (1) follows from the fact that every condition lies in a $V$-generic filter. Finally, the proof from (1) to (2) is similar to the usual proof of the forcing theorem, here relying on the lemma above, ensuring that every decision set is met by the symmetrically generic filter.
\end{proof}
We can now give a very nice reformulation (and combination) of the theorems from the end of \autoref{section:justification}. As we saw, mixing was a great obstruction for nice factorization theorems. However, in the productive case, all the supports are excellent and all the automorphisms are very concretely decided. This means the following theorem is true.
\begin{theorem}
Let $\PP_\alpha$ be a productive iteration of length $\alpha$, and let $G_\alpha$ be a $V$-symmetrically generic filter. Suppose that $\tup{\QQ_\alpha,\sG_\alpha,\sF_\alpha}$ is a symmetric system in $\IS_\alpha^{G_\alpha}$ which has a name such that extending $\PP_\alpha$ by this name is still a productive iteration. Then $G_\alpha\ast G$ is $V$-symmetrically generic for $\PP_\alpha\ast\dot\QQ_\alpha$ if and only if $G$ is $\IS_\alpha^{G_\alpha}$-generic for $\QQ_\alpha$.\qed
\end{theorem}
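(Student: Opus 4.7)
The plan is to establish a correspondence between dense subsets of $\QQ_\alpha$ in $\IS_\alpha^{G_\alpha}$ and symmetrically dense subsets of $\PP_\alpha\ast\dot\QQ_\alpha$ in $V$, and to prove the two directions of the biconditional separately. The productive structure is what makes this correspondence clean: by conditions (3) and (4) of the definition of a productive iteration, conditions and automorphisms of $\QQ_\alpha$ and $\sG_\alpha$ are pinned down by canonical $\bullet$-names in the ground model, so translating between ``names appearing in $\dot\QQ_\alpha$'' and ``conditions in $\QQ_\alpha$'' is essentially bijective. Throughout, I would invoke the factorization lemmas from \autoref{section:justification} to pass between $\IS_{\alpha+1}$-names and $\IS_\alpha$-names for $\dot\QQ_\alpha$-names.

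For the forward direction, suppose $G_\alpha\ast G$ is $V$-symmetrically generic, and let $D$ be dense in $\QQ_\alpha$ with $D\in\IS_\alpha^{G_\alpha}$. I would pick a name $\dot D\in\IS_\alpha$ with excellent $\cF_\alpha$-support $\vec H$ and define
\[E=\{(p,\dot q)\in\PP_\alpha\ast\dot\QQ_\alpha\mid p\forces_\alpha\dot q\in\dot D\},\]
then show that $E$ is symmetrically dense in $V$ under a suitable $\cF_{\alpha+1}$-support $\vec K$ extending $\vec H$ at the $\alpha$-th coordinate. Density of $E$ would follow directly from density of $\dot D$ in $\dot\QQ_\alpha$, and closure of $E$ under $\vec K$ would follow from the Symmetry Lemma applied to $\vec\pi\restriction\alpha\in\vec H$, provided the $\alpha$-th coordinate of $\vec K$ is chosen to stabilize the relevant $\dot q$'s. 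Then $E\cap(G_\alpha\ast G)\neq\varnothing$ by $V$-symmetric genericity, and the second coordinate of any witness lies in $G\cap D$.

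Conversely, suppose $G$ is $\IS_\alpha^{G_\alpha}$-generic for $\QQ_\alpha$, and let $E\in V$ be symmetrically dense in $\PP_\alpha\ast\dot\QQ_\alpha$ with support $\vec K\in\cF_{\alpha+1}$. In $V[G_\alpha]$, I would define
\[D=\{\dot q^{G_\alpha}\mid\exists p\in G_\alpha,\ (p,\dot q)\in E\}\]
and build a name $\dot D\in\IS_\alpha$ for $D$ with support $\vec K\restriction\alpha$. Density of $D$ in $\QQ_\alpha$ should follow from density of $E$ combined with $V$-symmetric genericity of $G_\alpha$: for each $q\in\QQ_\alpha$ with canonical name $\dot q$, the set of $p\in\PP_\alpha$ admitting some $(p',\dot q')\leq(p,\dot q)$ in $E$ with $p'\leq p$ is dense and symmetric in $V$, hence meets $G_\alpha$. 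That $\dot D\in\IS_\alpha$ would follow from the $\vec K$-closure of $E$ via the productive correspondence. Then $\IS_\alpha^{G_\alpha}$-genericity of $G$ delivers some $q\in G\cap D$, witnessed by a pair $(p,\dot q)\in E\cap(G_\alpha\ast G)$.

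The hard part will be in the forward direction, at the step of choosing $\vec K$ at the $\alpha$-th coordinate: the naive choice $\dot K_\alpha=\dot\sG_\alpha$ fails because an automorphism in $\dot\sG_\alpha$ can send a $\dot q\in\dot D$ to some $\dot q'$ outside $\dot D$, breaking the required closure of $E$. Overcoming this requires the tenacity hypothesis on $\dot\QQ_\alpha$: each sufficiently extended condition has a stabilizer subgroup in $\dot\sF_\alpha$, and by refining $E$ to pairs where $\dot q$ is tenacious (which is dense by hypothesis), a uniform $\alpha$-coordinate support can be extracted via the productive correspondence. A secondary subtlety is that density in the backward direction implicitly uses a lifting from $\PP_\alpha\ast\dot\QQ_\alpha$ back to $\PP_\alpha$, where $V$-symmetric genericity of $G_\alpha$ (rather than just plain genericity) is what ensures the needed extension exists inside the generic.
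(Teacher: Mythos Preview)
The paper states this theorem with a bare \qed\ and no argument, treating it as an immediate consequence of the factorization lemmas, the decision-set lemma, and the Productive Forcing Theorem just proved. Your backward direction is correct, including the observation that the density of $D$ in $\QQ_\alpha$ genuinely requires the $V$-symmetric genericity of $G_\alpha$ rather than full genericity.

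There is, however, a genuine gap in your forward direction. You correctly identify that your set $E=\{(p,\dot q)\mid p\forces_\alpha\dot q\in\dot D\}$ is not closed under automorphisms at the $\alpha$-th coordinate, but your proposed fix via tenacity does not succeed. Refining to tenacious $\dot q$'s gives each such $\dot q$ its own stabilizer $\dot H_{\dot q}\in\dot\sF_\alpha$, and these vary with $\dot q$; there is no reason for their intersection over all $\dot q$ appearing in $E$ to lie in $\dot\sF_\alpha$, so no ``uniform $\alpha$-coordinate support'' can be extracted this way. The obstacle you located is real, but tenacity is the wrong tool.

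The clean resolution---and presumably what the paper intends---is to bypass the explicit $E$ and apply the Productive Forcing Theorem for $\PP_{\alpha+1}$ directly. The canonical $\PP_{\alpha+1}$-name $\dot G=\{\tup{\tup{1_\alpha,\dot q},\dot q}\mid\dot q\text{ appears in }\dot\QQ_\alpha\}$ lies in $\IS_{\alpha+1}$ and is respected by every automorphism in $\cG_{\alpha+1}$ (since $\dot\QQ_\alpha$ is respected and, by productivity, $\gaut{\vec\pi}$ carries names appearing in $\dot\QQ_\alpha$ to names appearing in $\dot\QQ_\alpha$). Since $\dot D$ is a $\PP_\alpha$-name, it is automatically fixed by $\gaut{\dot\pi_\alpha}$ for any $\dot\pi_\alpha$. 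A straightforward density argument gives $1_{\alpha+1}\forces^\IS_{\alpha+1}\dot G\cap\dot D\neq\varnothing$, and then the Productive Forcing Theorem applied to the $V$-symmetrically generic $G_\alpha\ast G$ yields $G\cap D\neq\varnothing$. The $\alpha$-th coordinate closure you were struggling to manufacture comes for free from the full stability of $\dot G$, not from any property of the individual conditions.
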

\section{Preservation theorems}\label{section:preservation}
Recall that in this work we only consider finite support iterations. There are two major problems with finite support iterations and limit steps. The first and foremost is that at every limit step we add Cohen reals. Many Cohen reals. The second is that if we are not limiting ourselves to c.c.c.\ forcings, then we are likely to collapse cardinals, and many of them.

Both of these pose a problem if we want to look at anything remotely interesting. But we can mitigate our losses and show that at least assuming each step of the iteration is weakly homogeneous, we will only add reals when we explicitly add reals, and we will only collapse cardinals when we explicitly want to collapse cardinals. Not only this, but this extends, in fact, to class length iterations, which completely violate $\ZF$ in the class-generic extensions.

\begin{remark}
Note that we do not need to worry about productive or non-productive iterations. As a consequence of \autoref{thm:productive generics}, if we only prove these theorems using $V$-generic filters, the same will hold for symmetrically $V$-generic filters in the productive case.
\end{remark}
\subsection{Set forcing}
\begin{theorem}\label{thm:set preservation}
Suppose that $\PP_\delta$ is a symmetric iteration, and suppose that for every $\alpha<\delta$, \[1_\alpha\forces_\alpha^\IS\dot\sG_\alpha\text{ witnesses the homogeneity of }\dot\QQ_\alpha.\] If $\eta$ is such that there is $\alpha<\delta$, that for every $\beta>\alpha$, \[1_\beta\forces^\IS_\beta\tup{\dot\QQ_\beta,\dot\sG_\beta,\dot\sF_\beta}^\bullet\text{ does not add sets of von Neumann rank }<\check\eta,\] then whenever $\dot x\in\IS_\delta$ and $p\forces_\delta^\IS\rank(\dot x)<\check\eta$, there is some $q\leq_\delta p$ and some $\dot x'\in\IS_\alpha$ such that $q\forces_\delta\dot x=\dot x'$.
\end{theorem}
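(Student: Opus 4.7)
The plan is a transfinite induction on $\delta\geq\alpha$, together with an inner induction on $\rank(\dot x)$ at limit stages. The base case $\delta=\alpha$ is immediate. At every inductive step the goal is to find $q\leq_\delta p$ and a $\PP_\beta$-name $\dot x''\in\IS_\beta$ for some $\beta<\delta$ with $q\forces_\delta\dot x=\dot x''$; applying the outer hypothesis to $\dot x''$ then delivers the required $\dot x'\in\IS_\alpha$.

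For the successor step $\delta=\gamma+1$ with $\gamma\geq\alpha$, the main tool is the First Factorization Lemma. First I would refine $p$ to a tenacious condition $q$ below which the factored name $[\dot x]$ is a $\PP_\gamma$-name in $\IS_\gamma$ forced to be hereditarily $\dot\sF_\gamma$-symmetric. Passing to the intermediate model $\IS_\gamma^{G\restriction\gamma}$ for a generic $G$ with $q\in G$, the interpretation of $\dot x$ is realized by a hereditarily symmetric name for the one-step symmetric extension by $\tup{\dot\QQ_\gamma,\dot\sG_\gamma,\dot\sF_\gamma}^{G\restriction\gamma}$. By the preservation hypothesis that single-step extension adds no sets of rank $<\eta$, so the value of $\dot x$ already lies in $\IS_\gamma^{G\restriction\gamma}$. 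Translating back via the Forcing Theorem for $\forces^\IS$ produces some $q'\leq_\delta q$ and $\dot x^*\in\IS_\gamma$ with $q'\forces_\delta\dot x=\dot x^*$, and the outer induction at $\gamma$ concludes.

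At a limit $\delta$ I would induct on $\rank(\dot x)$. Pick a pre-dense set $D$ of tenacious refinements of $p$ witnessing $\dot x\in\IS_\delta$, each $r\in D$ carrying an excellent support $\vec H_r$; because $\supp r$ and $C(\vec H_r)$ are both finite, they lie below some $\gamma_r<\delta$. After refining $r$ so as to bound the name-ranks of the sub-names $\dot y$ appearing in $\dot x$, the inner hypothesis replaces each such $\dot y$, below a further extension, by a name in some $\IS_{\beta_y}$ with $\beta_y<\delta$. A uniformization argument, exploiting the weak homogeneity of $\cG_\delta$ on $\PP_\delta$ granted by the hypothesis, packages these reductions below a single $\gamma^*<\delta$; then one reduces to the outer hypothesis at length $\gamma^*+1$.

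The hard part will be exactly this last uniformization step in the limit case: showing that the family of sub-name reductions can all be placed below a common $\gamma^*<\delta$, rather than being cofinal in $\delta$. The homogeneity hypothesis on each $\dot\sG_\alpha$ (and hence on $\cG_\delta$, by the theorem of Section~\ref{section:ext aut}) is decisive, as it permits symmetrizing the various choices through automorphisms that fix an initial segment of the iteration, thereby absorbing the a priori cofinal behaviour into a bounded portion of the iteration—possibly after shrinking the excellent supports $\vec H_r$ appropriately.
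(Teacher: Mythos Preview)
Your overall architecture---outer induction on $\delta$, factorization at successors, rank induction at limits, homogeneity to compress---matches the paper's. But at the limit step you misplace where the difficulty lies.

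The inner rank hypothesis is the full theorem for names of smaller rank: each sub-name $\dot y$ appearing in $\dot x$ gets replaced, below some condition, by a name in $\IS_\alpha$, not merely in some $\IS_{\beta_y}$ with $\beta_y<\delta$. So there is no cofinal family of stages $\beta_y$ to uniformize; the sub-names all land in $\IS_\alpha$ automatically. The paper carries this out globally by rebuilding $\dot x$ as $\dot x'=\{\tup{q,\dot y_q}\mid\tup{p,\dot y}\in\dot x,\ q\in D(p,\dot y)\}$, where each $D(p,\dot y)$ is a maximal antichain below $p$ witnessing the reduction of $\dot y$ to $\IS_\alpha$; this sidesteps the problem of finding one common extension working for infinitely many $\dot y$'s at once, which your phrase ``below a further extension'' glosses over.

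The genuine obstacle---which your proposal does not isolate---is that after this replacement the \emph{conditions} appearing in $\dot x'$ are still arbitrary $\PP_\delta$-conditions, with supports potentially cofinal in $\delta$. The paper's key step: fix a tenacious $p$ with excellent support $\vec H_p$, choose $\beta<\delta$ above both $\supp p$ and $\max C(\vec H_p)$, and prove that whenever $q\leq_\delta p$ and $q\forces_\delta\dot y\in\dot x$ (for $\dot y$ appearing in $\dot x$, hence a $\PP_\alpha$-name), already $q\restriction\beta\forces_\delta\dot y\in\dot x$. This is where homogeneity enters: any two end-extensions of $q\restriction\beta$ are reconciled by some $\vec\pi$ with $\min C(\vec\pi)\geq\beta$; such $\vec\pi$ fixes $\dot y$ (a $\PP_\alpha$-name, $\alpha<\beta$) and lies in $\vec H_p(p)$ (since $\beta>\max C(\vec H_p)$), hence fixes $\dot x$. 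One then defines $\dot x_p=\{\tup{q\restriction\beta,\dot y}\mid q\leq_\delta p,\ q\forces_\delta\dot y\in\dot x,\ \dot y\text{ appears in }\dot x\}$, checks $\dot x_p\in\IS_\beta$ with $p\forces_\delta\dot x=\dot x_p$, and invokes the outer hypothesis at $\beta$. Your ``uniformization'' intuition points at the right tool but at the wrong target: it is the conditions, not the sub-name stages, that need to be compressed below a fixed $\beta$.
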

In other words, if $\delta$ is a limit ordinal, and $V_\eta$ (of $\IS_\alpha^G$'s) was stabilized before the $\delta$th step, then no new sets are added with von Neumann rank below $\eta$ at the limit step as well. So if no forcing added reals, the limit steps will not add reals either. At least under the assumption of homogeneity.
\begin{proof}
We prove this by induction on $\delta$. For $\delta=0$ there is nothing to prove, and for successor steps this is an easy consequence of the factorization theorems and the assumptions. Suppose that $\delta$ is a limit ordinal, we will prove the claim by induction on the rank of $\dot x\in\IS_\delta$.

Suppose that $\dot x\in\IS_\delta$, we define the name $\dot x'$ as follows: for every $\tup{p,\dot y}\in\dot x$ let $D(p,\dot y)$ be a maximal antichain below $p$ such that for every $q\in D(p,\dot y)$ there is some $\dot y_q\in\IS_\alpha$ such that $q\forces_\delta\dot y=\dot y_q$. Let $\dot x'=\{\tup{q,\dot y_q}\mid\exists\tup{p,\dot y}\in\dot x:q\in D(p,\dot y)\}$. It is not hard to see that $1_\delta\forces\dot x=\dot x'$, so we may assume that $\dot x=\dot x'$.

Let $\vec H$ be a support and $D$ be a pre-dense set witnessing that $\dot x\in\IS$, we may assume that every $p\in D$ is tenacious, $p\forces_\delta\vec H=\vec H_p$ for some excellent support, and $\vec H_p(p)$ fixes $p$. Let $p\in D$ and take $\beta$ such that $p\in\PP_\beta$ and $\beta>\max C(\vec H_p)$. We claim now that if $q\leq_\delta p$, and $q\forces_\delta\dot y\in\dot x$ for $\dot y$ appearing in $\dot x$, then $q\restriction\beta\forces_\beta\dot y\in\dot x$. Let us assume that we have shown that, now define \[\dot x_p=\{\tup{q\restriction\beta,\dot y}\mid q\leq_\delta p, q\forces_\delta\dot y\in\dot x\ \text{ and }\ \dot y\text{ appears in }\dot x\}.\] It follows that $p\forces_\delta\dot x_p=\dot x$. However every condition in $\dot x_p$ is in $\PP_\beta$, and every name appearing in $\dot x_p$ is in $\PP_\beta$. Therefore $\dot x_p$ is a $\PP_\beta$-name, moreover $\vec H_p\restriction\beta$ is an excellent $\cF_\beta$-support for $\dot x_p\restriction p$, so $\dot x_p\in\IS_\beta$. By the induction assumption there are names $\dot x_p'\in\IS_\alpha$ which $\dot x_p$ will be forced to be equal to them. But this equality goes up to $\delta$, as wanted.

So it remains to show that if $q\leq_\delta p$, and $q\forces_\delta\dot y\in\dot x$ for some $\dot y$ which appears in $\dot x$, then $q\restriction\beta\forces_\beta\dot y\in\dot x$. By the assumption, $\dot y$ is a $\PP_\alpha$-name, therefore any automorphism sequence $\vec\pi$ such that $\min C(\vec\pi)\geq\alpha$ must satisfy $\gaut{\vec\pi}\dot y=\dot y$, and moreover $p\forces_\delta\vec\pi\in\vec H_p$. Next, by homogeneity, any two extensions of $q\restriction\beta$ can be made compatible. Of course, if the two extensions are end-extensions, and $\vec\pi$ witnesses this, then we may assume that $\min C(\vec\pi)\geq\beta$, and so $q\restriction\beta\forces_\delta\gaut{\vec\pi}\dot x=\dot x$ and we are done. But now we are really done, otherwise there was some $q'$ such that $q'\leq_\beta q\restriction\beta$, with $q'\forces_\delta\lnot\dot y\in\dot x$, and this is impossible since $q'$ is compatible with $q$.

Therefore $q\restriction\beta\forces_\delta\dot y\in\dot x$, which allows us to define $\dot x_p$ and use the induction hypothesis on $\PP_\beta$ as wanted.
\end{proof}
\begin{remark}
Note that any continuous filtration of the universe can work in this context, not just the von Neumann hierarchy, and in fact the levels of the hierarchy need not be sets themselves. Namely, if $\{D_\alpha\mid\alpha\in\Ord\}$ is any continuous and uniformly definable filtration of the universe, and no new sets are added to $D_\eta$ above the stage $\alpha$ of the iteration, then under the assumption of homogeneity, $D_\eta$ is preserved at limit stages above $\alpha$ as well. The only necessary condition is that the $D_\alpha$'s have names stable under all automorphisms.
\end{remark}
\subsection{Class forcing}
\autoref{thm:set preservation} has an interesting corollary.

\begin{theorem}\label{thm:class preservation}
Suppose that $\tup{\dot\QQ_\alpha,\dot\sG_\alpha,\dot\sF_\alpha\mid\alpha\in\Ord}$ is a symmetric iteration and each step satisfies that $1_\alpha\forces_\alpha^\IS\dot\sG_\alpha\text{ witnesses the homogeneity of }\dot\QQ_\alpha$. Moreover, suppose that for every $\eta$, there is some $F(\eta)$ such that for every $\beta\geq F(\eta)$ no sets of von Neumann rank $<\eta$ are added by $\tup{\dot\QQ_\beta,\dot\sG_\beta,\dot\sF_\beta}$. Then whenever $G$ is a $V$-generic filter for $\PP=\PP_\Ord$, $\IS^G\models\ZF$.
\end{theorem}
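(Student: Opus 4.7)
The strategy is to reduce the class-length statement to \autoref{thm:ZF models} applied to each set-stage and to \autoref{thm:set preservation}. Note first that $\IS_\alpha^{G\restriction\alpha}\subseteq\IS^G$ for every $\alpha\in\Ord$, since any $\dot x\in\IS_\alpha$ can be seen as a $\PP_\Ord$-name by padding its support with $\dot\sG_\gamma$ for $\gamma\geq\alpha$, and this name is still hereditarily $\cF_\Ord$-respected. So $\IS^G$ contains an increasing chain of transitive class models of $\ZF$, all extending $V$. The plan is to leverage the rank-stabilization hypothesis to show that this chain is ``cofinally thick'' in $\IS^G$, in the sense that every set of $\IS^G$ of bounded rank already appears at some set-stage.

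The first step is the Key Lemma: for every ordinal $\eta$, $V_\eta\cap\IS^G\subseteq\IS_{F(\eta)}^{G\restriction F(\eta)}$. Given $x\in V_\eta\cap\IS^G$ with name $\dot x\in\IS$, the name $\dot x$ is a set, so every condition appearing hereditarily in $\dot x$ lies in some $\PP_\beta$ for an ordinal $\beta$. An excellent support $\vec H$ witnessing hereditary respect for $\dot x$ is likewise set-sized, so by restricting to $\PP_\gamma$ for $\gamma$ above $\beta$ and $\sup C(\vec H)+1$ we may view $\dot x$ as a name in $\IS_\gamma$. Applying \autoref{thm:set preservation} to the set-length iteration $\PP_{\max(\gamma,F(\eta))}$ gives some $q\leq\dot x$ in $G$ and $\dot x'\in\IS_{F(\eta)}$ with $q\forces\dot x=\dot x'$, so $x\in\IS_{F(\eta)}^{G\restriction F(\eta)}$.

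Next I would verify $\ZF$ in $\IS^G$. Extensionality, Foundation, Empty Set, Pair, Union and Infinity follow from $\IS^G$ being a transitive class containing $V$. For Power Set, given $x\in\IS^G$ with $\rank(x)<\eta$, set $\alpha=F(\eta+2)$; then $x\in\IS_\alpha^{G\restriction\alpha}$, and by the Key Lemma every subset of $x$ lying in $\IS^G$ (having rank $<\eta+2$) already lies in $\IS_\alpha^{G\restriction\alpha}$. Hence $\power(x)^{\IS^G}=\power(x)^{\IS_\alpha^{G\restriction\alpha}}$, which is in $\IS_\alpha^{G\restriction\alpha}$ (as it models $\ZF$) and thus in $\IS^G$. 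For Separation, given a formula $\varphi$ and names $\dot x,\dot{\bar p}\in\IS$, the canonical name
\[\dot A=\{\tup{p,\dot y}\mid \dot y\text{ appears in }\dot x,\ p\forces^\IS\dot y\in\dot x\land\varphi(\dot y,\dot{\bar p})\}\]
is $\cF$-respected (any automorphism sequence respecting both $\dot x$ and $\dot{\bar p}$ fixes $\dot A$ by the Symmetry Lemma), so $\dot A\in\IS$ and interprets as the separated set. Replacement is handled similarly: the image of $\dot x^G$ under a functional formula is a set in $V[G]$ of bounded rank, captured by the Key Lemma at a single set-stage, and a name can be assembled from the individual image names.

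The main obstacle is making sense of $\forces^\IS$ at class length, where definability is a genuine concern for general class forcing. The resolution is precisely the reduction above: the atomic relations $p\forces^\IS\dot x\in\dot y$ and $p\forces^\IS\dot x=\dot y$ involve only the set-many conditions appearing hereditarily in $\dot x,\dot y$, and by the Key Lemma they can be computed inside the set-length sub-iteration $\PP_\beta$ for some $\beta$, where the ordinary forcing relation of Section~4 is available. The recursion on formula complexity, as well as the Symmetry and Forcing Theorems for $\forces^\IS$, then go through exactly as in the set-length case, which is all that the above verification requires.
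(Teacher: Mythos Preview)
Your overall strategy---reduce to the set-length stages via \autoref{thm:set preservation} and verify the axioms by hand---matches the paper's, and your Key Lemma and Power Set argument are essentially what the paper does. The divergence is in Replacement, and there your sketch has a genuine gap.

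You write that ``the image of $\dot x^G$ under a functional formula is a set in $V[G]$ of bounded rank, captured by the Key Lemma at a single set-stage.'' But $V[G]$ is not known to satisfy Replacement (the paper explicitly flags that $V[G]$ fails Power Set, and finite-support class iterations need not be tame), so you cannot simply appeal to $V[G]$ to bound the ranks of the witnesses. What you need is a bound computed in $V$, and for that you must know that the class-length relation $\forces^\IS$ is definable in $V$ so that Replacement in $V$ applies to the function sending each $\dot x_i$ to a witnessing $\dot y_i$. Your final paragraph gestures at this, but only for atomic formulas; the quantifier case is exactly where class forcing can go wrong. The paper's solution is to invoke the Reflection Theorem in $V$ to find a single $\alpha$ which is a closure point: $V_\alpha\cap\PP=\PP_\alpha$, the name $\dot A$ lies in $V_\alpha$, and the formulas defining $p\forces^\IS\varphi(\dot x,\dot y)$ and $p\forces^\IS\forall x\in\dot A\,\exists!y\,\varphi(x,y)$ are absolute between $V_\alpha$ and $V$. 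Working inside $V_\alpha$ then produces set-many witnesses $\dot y_i\in\IS_\alpha\cap V_\alpha$, and the $\bullet$-name built from them is the desired image. This reflection step is the missing technical ingredient in your argument.

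A smaller point: your Separation name $\dot A=\{\tup{p,\dot y}\mid\dot y\text{ appears in }\dot x,\ p\forces^\IS\dot y\in\dot x\land\varphi(\dot y,\dot{\bar p})\}$ is, as written, a proper class, since $p$ ranges over all of $\PP_\Ord$. You need to restrict $p$ to some $\PP_\beta$ and argue (again via something like the reflection/closure-point idea) that nothing is lost; alternatively, once Replacement is established, Separation follows and need not be verified separately.
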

\begin{proof}
Let $N$ be $\IS^G$. We cannot use the fact that $N$ is a transitive, almost universal class which is closed under G\"odel operations, since this route requires us to work inside a model of $\ZF$, and $V[G]$ fails to satisfy the power set axiom, as we added a proper class of Cohen reals (and we may have collapsed all the cardinals or even $\Ord$ itself to be countable). Instead, we verify all the axioms of $\ZF$ hold in $N$.

Extensionality, Infinity and Regularity are easy to verify, as neither is violated in $V[G]$, and $N$ is a transitive class of $V[G]$. Power set and Union hold as a corollary of \autoref{thm:set preservation}. If $x\in N$, then there is some $\eta$ such that $x\in V_\eta^N$, it follows that if $\dot x$ is a name for which $\dot x^G=\dot x$, we can assume that $\dot x\in\IS_{F(\eta)}$ and both the union and power set of $x$ will certainly be the same as they are computed in $\IS_{F(\eta+1)}$. It remains to check that Replacement holds in $N$.

Let $\varphi(x,y)$ be a formula and $A\in N$ (we omit the parameters for simplicity) such that $N\models\forall x\in A\exists!y\; \varphi(x,y)$. Fix $\dot A\in\IS$ such that $\dot A^G=A$. We say that $\alpha$ is a \textit{closure of $\PP$ for $\varphi$ and $\dot A$} if $V_\alpha\cap\PP=\PP_\alpha$, $\dot A\in V_\alpha\cap\IS_\alpha$, the formulas defining $p\forces^\IS\varphi(\dot x,\dot y)$ and $p\forces^\IS\forall x\in\dot A\exists!y\;\varphi(x,y)$ are reflected in $V_\alpha$, and moreover there exists some $p\in G\cap V_\alpha$ such that $p\forces^\IS\forall x\in\dot A\exists!y\;\varphi(x,y)$. We can find such closure point by the reflection theorem.

Let $\alpha$ be a closure point for $\varphi$ and $\dot A$, and let $p$ be a condition witnessing that $\varphi$ defines a function on $A$. Using mixing in $\PP_\alpha$, let $\{\dot x_i\mid i\in I\}$ be such that $p\forces_\alpha^\IS\dot A=\{\dot x_i\mid i\in I\}^\bullet$ and let $\dot y_i\in\IS_\alpha\cap V_\alpha$ such that $p\forces^\IS_\alpha\varphi(\dot x_i,\dot y_i)$ for all $i\in I$. We claim that $\dot B=\{\dot y_i\mid i\in I\}^\bullet$ is the name we are looking for to witness this instance of Replacement holds in $N$. By the choice of $\alpha$ as a closure point, $p\forces^\IS\varphi(\dot x_i,\dot y_i)$ for all $i\in I$. Now suppose that $q\leq p$, then $q\forces^\IS\forall x\in\dot A\exists!y\;\varphi(x,y)$, and if $q\forces^\IS\varphi(\dot x_i,\dot y)\land\varphi(\dot x_i,\dot y_i)$, then it is the case that $q\forces^\IS\dot y=\dot y_i$. 

Therefore $\dot B^G\in N$ is exactly the set $\{y\mid\exists x\in A\;\varphi(x,y)\}$, as wanted.
\end{proof}
\section{Example: failure of Kinna--Wagner Principles}\label{section:kwp}
\subsection{On Kinna--Wagner Principles}
The Kinna--Wagner Principle states that every set can be injected into the power set of an ordinal. It is a weakening of the axiom of choice that was given by Willy Kinna and Klaus Wagner in \cite{KinnaWagner:1955}. The principle implies that every set can be linearly ordered, however it is independent from statements like the Boolean Prime Ideal Theorem over $\ZF$ (see \cite{Pincus:1997} for a discussion on the subject). 

Monro defined, for a natural number $n$, $\KWP_n$ to be the statement that for every $X$ there exists an ordinal $\eta$, such that $X$ can be injected to $\power^n(\eta)$. In \cite{Monro:1973}, Monro extends the classic theorem of Vop\v{e}nka and showed that if $M$ and $N$ are transitive models of $\ZF$ with the same $(n+1)$th power set of ordinals, and $M\models\KWP_n$, then $M=N$. We will extend both Monro's definition and his extension of the Balcar--Vop\v{e}nka theorem below.
\begin{definition}
We define the \textit{$\alpha$-set of ordinals} hierarchy by recursion. $0$-sets ordinals are sets of ordinals; $\alpha$-sets of ordinals are sets of $\beta$-sets of ordinals for $\beta<\alpha$. Namely, an $\alpha$-set of ordinals is a subset of $\power^\alpha(\eta)$ for some ordinal $\eta$.\end{definition}
We will generally shorten the term, and just write that $A$ is an $\alpha$-set. Note that by recursion and usual coding arguments, $\alpha$-sets are closed under unions, finite products and even finite sequences. Namely, if $A$ is an $\alpha$-set, then $A^{<\omega}$ and $[A]^{<\omega}$ can be encoded as an $\alpha$-set as well.
\begin{definition}
For an ordinal $\alpha$, $\KWP_\alpha$ is the statement that every set $A$ is equipotent with an $\alpha$-set. We will abbreviate the statement $\exists\alpha\;\KWP_\alpha$ simply to $\KWP$.
\end{definition}
The axiom of choice is $\KWP_0$ and the classic Kinna--Wagner principle is $\KWP_1$. But as Monro proved in \cite{Monro:1973}, for every $n$, $\KWP_{n+1}\nrightarrow\KWP_n$. 
\begin{theorem}[The Generalized Balcar--Vop\v{e}nka--Monro Theorem]\label{thm:gen-bvm-thm}
Let $M$ and $N$ be two transitive models of $\ZF$ with the same $\alpha$-sets. If $M\models\KWP_\alpha$, then $M=N$.
\end{theorem}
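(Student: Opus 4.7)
The plan is to prove $M=N$ by establishing the two inclusions separately; both use the $\KWP_\alpha$-injections in $M$, but in very different ways.

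For the first inclusion $M\subseteq N$, given $X\in M$ I will consider the transitive closure $T=\text{trcl}(\{X\})$ and, by $\KWP_\alpha$ inside $M$, fix an injection $g\colon T\to\power^\alpha(\eta)^M$ for some ordinal $\eta$. The image $A=g[T]$ is an $\alpha$-set in $M$, and the transported membership relation $\tilde E=\{\tup{g(x),g(y)}\mid x\in y\in T\}$ is a set of pairs from $A$; using the closure of $\alpha$-sets under finite products mentioned right after the definition, $\tilde E$ can be encoded as an $\alpha$-set $E$. By the ``same $\alpha$-sets'' hypothesis, both $A$ and $E$ belong to $N$, so in $N$ the decoded pair $(A,\tilde E)$ is a well-founded extensional structure that is isomorphic, via $g^{-1}$, to $(T,\in)$. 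Its Mostowski collapse computed inside $N$ therefore produces a transitive set which, by uniqueness of the collapse of a well-founded extensional structure, must be $T$ itself; hence $T\in N$ and so $X\in N$.

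For the reverse inclusion I proceed by $\in$-induction on the rank of elements of $N$. Assume every $y\in N$ of rank less than $\beta$ already lies in $M$, and let $Y\in N$ have rank $\beta$. Since rank is absolute between transitive $\ZF$ models, every element of $Y$ lies in $V_\beta^M$, so $Y\subseteq V_\beta^M$. Applying $\KWP_\alpha$ inside $M$ gives an injection $h\colon V_\beta^M\to\power^\alpha(\eta)^M$ that lies in $M$, and hence in $N$, so $h[Y]\in N$ and $h[Y]\subseteq\power^\alpha(\eta)^M$. The key observation is that $\power^\alpha(\eta)^M$ is trivially an $\alpha$-set in $M$ (a subset of itself), hence an $\alpha$-set in $N$ by hypothesis, i.e.\ $\power^\alpha(\eta)^M\subseteq\power^\alpha(\mu)^N$ for some $\mu$; consequently $h[Y]$ is itself an $\alpha$-set in $N$. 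Invoking ``same $\alpha$-sets'' once more yields $h[Y]\in M$, and since $h\in M$ is injective we recover $Y=h^{-1}[h[Y]]\in M$.

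The main obstacle I expect is the uniform coding step of the first direction: one needs, in advance and sufficiently absolutely, an injection of pairs of elements of $\power^\alpha(\eta)$ into $\power^\alpha(\eta')$ for some $\eta'$, defined by recursion on $\alpha$ with a separate clause for limit stages, and one has to check that this encoding commutes well enough with the passage between $M$ and $N$ that the structure decoded in $N$ really is isomorphic to $(T,\in)$. A secondary subtlety is the book-keeping for the $\in$-induction in the second direction: it must be carried out externally, and its point is precisely to force $V_\beta^N\subseteq V_\beta^M$ at each stage via the absoluteness of rank, so that $V_\beta^M$ is the right ``ambient'' set to which $\KWP_\alpha$ is applied.
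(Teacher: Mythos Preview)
Your proposal is correct and follows exactly the route the paper sketches: encode the transitive closure of a set in $M$ as a well-founded extensional relation on an $\alpha$-set, transfer it to $N$ via the ``same $\alpha$-sets'' hypothesis, and Mostowski-collapse there; then prove $N\subseteq M$ by $\in$-induction on rank, pushing each $Y\in N$ through a $\KWP_\alpha$-injection of $V_\beta^M$ and pulling back. The paper's proof is only a three-line sketch citing Jech's proof of the classical Balcar--Vop\v{e}nka theorem, so you have in fact supplied considerably more detail than the paper itself; the two obstacles you flag (absoluteness of the pairing code for $\alpha$-sets, and the external rank induction) are real but routine, and the paper does not spell them out either.
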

The proof of the Balcar--Vop\v{e}nka theorem can be found in \cite[Theorem~13.28]{Jech:ST}, and it generalizes quite immediately here.
\begin{proof}[Sketch of Proof]
Every set in $M$ can be encoded as a well-founded relation on an $\alpha$-set. By the Mostowski collapse lemma, every set in $M$ lies in $N$. The direction $N\subseteq M$ is proved by induction on the von Neumann rank of members of $N$.
\end{proof}
\medskip

Andreas Blass in \cite{Blass:1979} defined $\SVC(X)$ to be the statement ``For every $A$ there exists an ordinal $\eta$ such that there is a surjection from $X\times\eta$ onto $A$'', let $\SVC$ be the statement $\exists X\;\SVC(X)$. We call such $X$ a \textit{seed}. Blass also proved in that same paper, that if $M$ is a symmetric extension of $V$, where $V\models\ZFC$, then $M$ satisfies $\SVC$; and that $M\models\SVC$ if and only if there is a generic extension of $M$ satisfying the axiom of choice.
\begin{theorem}
Suppose that $M\models\SVC$. Then $M\models\KWP$. Consequently, every symmetric extension of a model of $\ZFC$ satisfies $\KWP$.
\end{theorem}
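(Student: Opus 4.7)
The plan is to extract from the seed $X$ a single ordinal $\alpha$ for which $\KWP_\alpha$ holds. First, I would establish that $X$ itself is an $\alpha$-set for some ordinal $\alpha$, with the natural candidate $\alpha=\rank(X)$. One shows by transfinite induction on $\beta$ that $V_\beta$ injects into $\power^\beta(\eta_\beta)$ for some ordinal $\eta_\beta$: at successor steps this is immediate from applying the power-set functor to the previously constructed injection; at limit stages one tags every $x\in V_\beta$ with its rank (to keep images of elements from different levels disjoint) and uses that $\beta\subseteq\power^\beta(\eta)$ together with Kuratowski pairing, which raises the power index by at most $2$ and so stays inside $\power^\beta(\eta)$ as soon as $\beta\geq\omega$ is a limit. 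Since $X\subseteq V_\rho$ for $\rho=\rank(X)$, this yields an injection $X\hookrightarrow\power^\rho(\mu_0)$ for some $\mu_0$.

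Having fixed $\rho$, the seed property handles every other set uniformly. For an arbitrary set $A$, $\SVC(X)$ supplies an ordinal $\eta_A$ and a surjection $f\colon X\times\eta_A\twoheadrightarrow A$, so the inverse-image map $a\mapsto f^{-1}(a)$ is an injection $A\hookrightarrow\power(X\times\eta_A)$. By the closure of $\rho$-sets under finite products (remarked just after the definition of $\alpha$-sets), $X\times\eta_A$ is again a $\rho$-set, so $X\times\eta_A\hookrightarrow\power^\rho(\mu_A)$ for some $\mu_A$. Composing the two injections gives $A\hookrightarrow\power^{\rho+1}(\mu_A)$, exhibiting $A$ as equipotent with a $(\rho+1)$-set. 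Since $\rho+1$ is independent of $A$, this is $\KWP_{\rho+1}$, hence $\KWP$. The consequence then follows from Blass's theorem cited just before the statement, which gives $\SVC$ in every symmetric extension of a model of $\ZFC$.

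The main obstacle I anticipate is the bookkeeping in the induction $V_\beta\hookrightarrow\power^\beta(\eta_\beta)$, specifically at limit stages: one has to combine the previously chosen injections into a single uniform target while preventing their images from colliding, which is achieved by pairing each element with its rank and appealing to the stability of $\power^\beta(\eta)$ under Kuratowski pairing for limit $\beta\geq\omega$. Once this is arranged, everything else is a routine composition of injections, and no additional use of the symmetric machinery is needed beyond invoking the Blass result in the last step.
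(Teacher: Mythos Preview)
Your proposal is correct and follows essentially the same route as the paper: identify $\rho=\rank(X)$, observe that $X$ is a $\rho$-set, and then use $\SVC(X)$ together with the closure of $\rho$-sets under products to get $\KWP_{\rho+1}$, invoking Blass for the final clause. Your transfinite induction on $V_\beta\hookrightarrow\power^\beta(\eta_\beta)$ is more elaborate than needed, since in fact $V_\beta=\power^\beta(\varnothing)$ literally (by the same recursion defining both hierarchies), so $X\subseteq V_\rho$ is already a $\rho$-set without any tagging or pairing.
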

\begin{proof}
If $M\models\SVC$, let $X$ be a witness of that, and let $\alpha$ be the von Neumann rank of $X$. Then $X$ is an $\omega+\alpha$-set, and by induction on $\alpha$, it is equipotent with an $\alpha$-set. It follows that every set can be injected into $\power(X\times\eta)$ for some ordinal $\eta$. Therefore $\KWP_{\alpha+1}$ holds.
\end{proof}

\subsection{Failure of Kinna--Wagner Principles}
To prove that $\KWP_{n+1}\nrightarrow\KWP_n$, Monro constructed a sequence of transitive models, $M_n$, such that $M_n$ and $M_{n+1}$ have the same $n$-sets, but not the same $n+1$-sets, and $\KWP_n$ fails to hold in $M_{n+1}$. The idea was to add at each step a Dedekind-finite set $A_n$, and then construct a model similar to Cohen's model, only adding $\omega$ subsets to $A_n$. Monro used relative constructibility instead of symmetric extensions, but we know from the work of Grigorieff in \cite{Grigorieff:1975} that the two methods are tightly connected. The limitation of Monro's method was that it had to stop at $\omega$: the increasing union of models of $\ZF$ is not necessarily a model of $\ZF$ again.
\smallskip

We present here a reconstruction of Monro's work within the framework of iterated symmetric extensions. This lets us coalesce these models into one, where $\KWP_n$ fails for all $n<\omega$ at the same time. We will discuss the difficulties in extending this construction beyond $\omega$ steps. 

\begin{theorem}
For all $n<\omega$ there is a model where $\KWP_{n+1}$ holds, but $\KWP_n$. Moreover, there is a model of $\ZF$ where $\KWP_n$ fails for all $n<\omega$ simultaneously.
\end{theorem}
\begin{proof}
We define for all $n$ a symmetric system which is weakly homogeneous and does not add $n$-sets, and that the $n$th iteration forces (relative to $\IS_n$) that $\KWP_n$ holds. The idea is to iterate the construction of the Cohen model---like in Monro's original paper---where we start by adding $\omega$ Cohen reals, then add $\omega$ Cohen subsets to the set of Cohen reals, then $\omega$ subsets to the set obtained and so on. This approach fails when reaching limit steps. We will discuss this failure, and what can be done to overcome it at the end of the proof.

We begin by giving a rigorous definition of the symmetric system we will use, it is weakly homogeneous by its finitary nature. Unfortunately, it does not simplify the definition to consider this specific context, so we give a seemingly convoluted definition for a general context. The first one is the way we make a generic copy of adding $\omega$ subsets to a given name $\dot X$, indexed by some set $I$.

We find it clearer to separate $I$ and $\omega$ in this definition as it is more informative as to which role each index is taking, even though for our case $I$ will always be $\omega$ itself. The definition is based on the following principle, if we add $|I|$ Cohen reals, then we have added $\omega$ subsets to $I$, by simply rotating the matrix of the Cohen reals. 

\begin{definition}
Suppose that $\dot X=\{\dot x_i\mid i\in I\}^\bullet$ is a name such that for all $i\neq j$, $1\forces\dot x_i\neq\dot x_j$. For $f\in\Add(\omega,I)$ we let $\dot q_f=\{\tup{\tup{\check n,\dot x_i}^\bullet,f(i,n)}^\bullet\mid\tup{i,n}\in\dom f\}^\bullet$, and let $\Add^\bullet(\dot X,\check\omega)$ denote the name $\{\dot q_f\mid f\in\Add(\omega,I)\}^\bullet$. Let $S_\omega$ denote the group of finitary permutations of $\omega$, then for any $\pi\in S_\omega$, there is a natural action of $\pi$ on $\Add(\omega,I)$ given by $\pi f(i,\pi n)=f(i,n)$ and this translates to a natural action on $\Add^\bullet(\dot X,\omega)$ given by $\dot\pi\dot q_f=\dot q_{\pi f}$. 

The \textit{standard Cohen system for $\dot X$} is the symmetric system given by $\Add^\bullet(\dot X,\check\omega)$ with the permutation group induced by $S_\omega$ and the normal filter of subgroups given by finite stabilizers. The set of \textit{canonical generics} for $\Add^\bullet(\dot X,\check\omega)$ is given by the name \[\{\tup{1_A,\dot X_n}^\bullet\mid n<\omega\}^\bullet,\] where $\dot X_n=\{\tup{\dot q_f,\dot x_i}^\bullet\mid f(i,n)=1\}^\bullet$. These names are in fact names-of-names, and are analogous to the canonical Cohen reals and the canonical name for the set of Cohen reals in the extension where $\dot X$ is interpreted.
\end{definition}
It is not hard to see that whenever $\dot X\in\IS$ for some symmetric iteration satisfies the condition of the definition, then the standard Cohen system for $\dot X$ is also in $\IS$ and has the same support as $\dot X$. In particular, if all automorphisms respect $\dot X$, then the standard Cohen system is a candidate for continuing the iteration.

If $p$ is a condition in the standard Cohen system and $E$ is a finite subset of $\omega$, we write $p\restriction E$ to mean $p\restriction(E\times\omega)$. Namely, $p\restriction E$ is the restriction of $p$ to only the canonical generics with indices in $E$. As we are going to iterate these forcings in the proof that will now follows, if $\vec E$ is a sequence of finite sets of $\omega$ of the length of the iteration, and $p$ is a condition in the iteration, we will write $p\restriction\vec E$ to denote the pointwise restriction $p(n)\restriction E_n$.

Our assumptions on the ground model are simply $\ZFC$. We will refer to some proofs written elsewhere under assumption of $V=L$, or the existence of some global well-ordering, however these are only cosmetic assumptions for simplifying these proofs. We define by induction our symmetric systems. Suppose that the symmetric iteration $\tup{\PP_n,\cG_n,\cF_n}$ was defined, and $1_n\forces_n^\IS\KWP_n\land\forall k<n,\lnot\KWP_k$.

For $n=0$, let $\tup{\QQ_0,\sG_0,\sF_0}$ be the standard Cohen system taking $I=\omega$. This is exactly the standard Cohen model. We denote by $\dot a_{1,m}$ the $m$th canonical generic of the system and by $\dot A_1$ the name for the set of canonical generics. It is a standard fact that $1_1\forces^\IS_1\KWP_1$.\footnote{It is wise to pause and contemplate the path we slowly walked in life, if we ended up with a notation as odd as $1_1$.}

Suppose that the $\PP_n$ was defined. Let $\tup{\dot\QQ_n,\dot\sG_n,\dot\sF_n}^\bullet$ be the standard Cohen system for $\dot A_n$. We denote by $\dot a_{n+1,m}$ the $m$th canonical generic, and $\dot A_{n+1}$ is the set of canonical generics, $\{\dot a_{n+1,m}\mid m<\omega\}^\bullet$. Using a result of Halpern--Levy (see \cite[~Ch.~5,~Problem~23]{Jech:AC}, originally in \cite{HalpernLevy:1967}). the partial interpretation function needed for \cite[~Lemma~5.25]{Jech:AC} to go through is definable in $\IS_{n+1}$. The lemma provides a definable injection from the Cohen model into $I\times\Ord$, where $I$ is the set of finite subsets of $A_1$ above. As the assumptions in Jech include $V=L$, this is indeed the result provable, but in the general setting one can replace $\Ord$ by a sufficiently large ordinal $\eta$, and have the result localized to each $V_\eta$.

In our setting we replace $\Ord$, or even $\eta$, by $\power^n(\Ord)$ or $\power^n(\eta)$ for a large enough $\eta$; and $I$ by the finite subsets of $\dot A_{n+1}$ which is an $n+1$-set. And so the result is that if $\dot x\in\IS_{n+1}$, then $\forces^\IS_{n+1}``\exists\dot y(|\dot x|=|\dot y|)$ and $\dot y$ is an $n+1$-set''. In other words, $1_{n+1}\forces^\IS_{n+1}\KWP_{n+1}$.

Our proof would be complete if we can prove that moving from $\IS_n$ to $\IS_{n+1}$ we do not add new $k$-sets for $k<n$. In that case, as a consequence of \autoref{thm:gen-bvm-thm} each of the models has to be distinct, and since an element of $V_{\omega+n+1}$ is always coded by an $n$-set, this would imply the stabilization needed for the preservation theorem, \autoref{thm:set preservation}. Moreover, as we remarked at the end of \autoref{section:preservation}, we can talk about any uniformly definable filtration of the universe, not just the von Neumann ranks. This applies in this case to the hierarchy of $\alpha$-sets. In particular, the limit of the iterations will not satisfy $\KWP_n$ for any $n$, since its $n$-sets came from the $n+1$th step of the iteration, where $\KWP_n$ fails.

We shall prove, by induction on $n$, that if $\dot x\in\IS_n$ is a name of a $k$-set for $k<n$, then there is $p\in\PP_n$ and some $\dot y\in\IS_k$ such that $p\forces^\IS_n\dot x=\dot y$. Or in other words, that no new $k$ sets were added when forcing with the $n$th symmetric system.

For $n=0$, there is nothing to check (morally this would amount to checking that no ordinals were added, which is of course the case with forcing). Suppose that $n=m+1$. We will prove the claim by induction on $k<n$. Suppose that $\dot x\in\IS_n=\IS_{m+1}$ is such that $1_n\forces^\IS_n``\dot x\text{ is a }\check k\text{-set}$'' for some $k<n$, by standard arguments we may assume that every name which appears in $\dot x$ is in fact in $\IS_k$. Let $\vec E$ be a sequence of finite subsets of $\omega$ such that $\tup{\fix(E_k)\mid k<n}$ is a support for $\dot x$.

By standard homogeneity arguments we have that if $p\forces^\IS_n\dot u\in\dot x$ and $\dot u\in\IS_k$, then $p\restriction\vec E\forces^\IS_n\dot u\in\dot x$. If we can show that there is a fixed condition $r$ such that $\supp(r)\subseteq(j,n)$ such that if $p\forces^\IS_n\dot u\in\dot x$, then $(p\restriction k)^\smallfrown r\forces^\IS_n\dot u\in\dot x$, then it will be enough to conclude that $\dot x$ can be reduced to a name in $\IS_j$, and if $j\leq m$, the induction hypothesis is enough to take us all the way to $k$ itself. So in fact it is enough to simply get rid of the $m$th coordinate in this fashion. To do that, we will need the following lemma.

\begin{lemma}\label{lemma:kwp-up-hom}
For all $n$, given $p\in\PP_n$ and a condition $\dot q$ in $\dot\QQ_n$, there is $p'\leq_n p$ and finite set $D$ which depends only on $p'$ such that if $p'\forces_n\dot q'\leq_{\QQ_n}\dot q\restriction\omega\times D$, then there is $\vec\pi\in\cG_n$ such that $\gaut{\vec\pi}p'=p'$ and $p'\forces_n``\gaut{\vec\pi}\dot q'$ is compatible with $\dot q$''.
\end{lemma}

Assume the lemma. Without loss of generality, $p=p'$ as in the lemma, and that $p(m)$ having the property that its domain is maximal with respect to the finite set guaranteed by the lemma. Now by homogeneity we get that given any name $\dot u$ for a $k'$-set for $k'<k$, we can decide $\dot u\in\dot x$ by only extending $p\restriction m$. This finished the proof indeed, as wanted.
\end{proof}
\begin{proof}[Proof of \autoref{lemma:kwp-up-hom}]
For the case $n=0$, $\PP_0$ is the trivial forcing and there is nothing to prove. Suppose that $n=m+1$, in that case $\dot\QQ_n$ is a standard Cohen system and $\PP_n$ is an iteration of Cohen systems too. Let $p$ be a condition and $\dot q$ be a name for a condition in $\dot\QQ_n$. First extend $p$ to $p'$ such that:
\begin{enumerate}
\item $p'$ has the property that $p\restriction k$ decides the condition $f$ such that $p(k)=\dot q_f$. In particular the domains of each coordinate of $p$ are decided.
\item $p'$ itself decides that $\dot q=\dot q_f$ for some $f\in\Add(\omega,\omega)$.
\end{enumerate}
For readability, assume that $p=p'$. Let $D$ be the union of all the domains of $p(i)$ for $i<n$, then it is finite and we claim that $D$ is the wanted set. Suppose that $\dot q'$ is such that $p\forces_n\dot q'\leq_{\QQ_n}\dot q\restriction\omega\times D$. Assume momentarily that $p$ also decides that $\dot q'=\dot q_g$ for some $g\in\Add(\omega,\omega)$.

We can now find a finitary permutation of $\omega$, $\pi$ such that $\pi$ acts on $\dot\QQ_m$, and thus on $\PP_n$, with the following properties:
\begin{enumerate}
\item $\pi\restriction D=\id$.
\item If we consider $g'(i,j)=g(i,\pi j)$, then $g'$ and $f$ are compatible.
\end{enumerate}
Once we find such $\pi$, let it be $\pi_m$ such that $\pi_j=\id$ for $j<m$ in the sequence $\vec\pi$. Then $\gaut{\vec\pi} p=p$ by the choice of $D$, and $\gaut{\vec\pi}\dot q_g=\dot q_{g'}$ which is compatible with $\dot q\restriction\omega\times D$.

Of course, such $\pi$ is easy to find. Simply ``move'' anything outside of $D$ to a part disjoint from the domain of $f$. If $p$ does not decide the value of $\dot q'$ as $\dot q_g$, then find a maximal antichain below it which does, for each one of these conditions find a suitable automorphism as above, and use mixing to define $\pi$.
\end{proof}

\subsection{Transfinite failures} So what happens when we try to go forward with the construction? We have an obvious $\omega$-set of ordinals in the form of $A_\omega=\bigcup_{n<\omega}A_n$, and we would like to add a new subset to $A_\omega$. The obvious thing to do is to add it with finite conditions, but this is a problem, since you want to ensure that no $n$-sets are added, and a finite conditions approach will add a new subset to each of the $A_n$'s. So instead we need to choose the intersection of the new subset with each $A_n$. But this too leads us to some problems.

If we choose $B_n\subseteq A_n$, then just by asking what are the $n$'s for which $B_n$ is finite, or co-finite, or co-infinite, we can already code a real number. We can ask whether or not $B_n$ is a subset of those elements in $B_{n+1}$, and so on. So approximating using finite conditions is surely to introduce a new real. And we only considering adding a single subset.

Assaf Shani solves this problem in \cite{Shani:unpub} by requiring that we choose one element from each $A_n$, and that $a_n\in a_{n+1}$ for all $n$. So we create an increasing $\in$-chain. This argument works for adding a single choice function, however when adding two generic choice sequences, the coordinates on which they agree is already going to be a new real, so the approach does not solve the whole problem. Instead, Shani solves this by introducing a generic tree, such that adding any finitely many branches to the tree will not add new reals, or any $n$-sets for $n<\omega$. This approach can be generalized to any countable ordinal length by adding a cofinal sequence of tree-like forcings which are ``sufficiently distant'' from one another. However, when trying to push this approach to $\omega_1$, Fodor's lemma shows there is no way to add these forcings in a coherent way. 

We finish by pointing out that while there is no proof that $\KWP_{\alpha+1}\nrightarrow\KWP_\alpha$ for \textit{all} $\alpha$, in \cite{Karagila:Bristol} the author construct a sequence of models $M_\alpha$, and their union $M$, such that $M$ has the same $\alpha$-sets as $M_\alpha$. In particular, $\KWP$ fails in $M$ and $\KWP_\alpha$ fails in $M_\alpha$. It is unclear, however, whether or not $\KWP_{\alpha+1}$ holds in $M_\alpha$ or how does the failures ``move up'' between the different models. Therefore the only thing we can confidently say is that there is a proper class of ordinals such that if $\alpha<\beta$ are both in the class, then $\KWP_\beta$ does not imply $\KWP_\alpha$ in $\ZF$.

\section{Coda}\label{section:coda}

As with any new mathematical technology, we hope that there will be some exciting uses for this technology in the future. Some of these include clearer and easier reconstruction of Gitik's model and of those of Sageev. In \cite{Karagila:Bristol} we use a class-length productive iteration to construct an intermediate model of $\lnot\KWP$ which lies between $L$ and Cohen real extension $L[c]$.

\begin{question}Is iterating symmetric extensions the same as a single symmetric extension?
\end{question}
We know that in the context of forcing the answer is yes: iterated generic extensions can be presented as a single generic extension. And while a positive answer to the above question might seem to nullify some of the efforts taken here, it is still a different approach to constructing models of $\ZF$, and we can still reach class-length iterations under some hypotheses. Using Grigorieff's work, we know that symmetric extensions are models of the form $\HOD(V\cup x)^{V[G]}$ for some $x\in V[G]$. The problem is that $\HOD$-type models are not robust between models, so there is no guarantee that a symmetric extension of a symmetric extension is itself necessarily a symmetric extension, let alone a limit of symmetric extensions.

\begin{question}
Can the construction be extended in a reasonable way to any other type of iterations (e.g.\ $\kappa$-support iteration)?
\end{question}

If the first question is answered positively, this might shed some light on the second answer.

It is often the case that a forcing over a model of $\ZF$ will well-order some part of the universe. For example, when $\kappa$ is regular, $\Add(\kappa,\kappa)$ introduces a bijection between $\kappa^{<\kappa}$ and $\kappa$. This means that iterating Cohen-like constructions one step at a time will only violate choice at limit steps. With finite support iterations, Dependent Choice is sure to fail at those limit steps. However, with countable support iterations, we might salvage Dependent Choice, or other weak choice principles.

\subsection{Products of symmetric systems}
The absence of \textit{products} of symmetric system from this work so far is quite notable. Much like the fact that products can be viewed as iterations where all forcing notions come from the ground model, we can also look at a product of symmetric systems. There, we have $\tup{\QQ_\alpha,\sG_\alpha,\sF_\alpha}$ for some $\alpha<\delta$, we define a some kind of product $\PP_\delta$ (finite support, countable support, Easton, etc.) of the forcing notions. We can define a product of the groups with a similar support (or a different one, if we choose to) as a group of automorphisms of the product $\PP_\delta$ acting pointwise on each component. Then we can define a product of the filters, again with whatever support that our heart desires. Therefore the product of symmetric system is itself a symmetric system. But it is also an iteration. Moreover, since the iteration can be presented using only canonical $\check x$-style names, it is a productive one.

Indeed, this gives us a small step towards answering both of the questions we raised. Yes, the product of symmetric systems is a symmetric system. Yes, we can extend it to a variety of supports, not just finite support, granted that we only use ground model objects. But it also points out obvious difficulties, since in general a countable support product is not a countable support iteration.

\section*{Acknowledgments}
As with any work that is painstakingly developed over the course of a long time, there are many people to thank, more than it would be appropriate to recount here. Three people were particularly helpful to the overall process: Menachem Magidor, my advisor, for his constant help and advice; Yair Hayut, my colleague and friend, for his patience and many helpful ideas; and Martin Goldstern who visited in Jerusalem during the spring of 2015 and was willing to listen the then-current drafts and identify weak points. Without the three of them, it is unclear if this work would have been finished as quickly as it has. The anonymous referee also deserves their fair share of praise for reading this paper, start to finish, and making helpful remarks and suggestions.

Additionally, it is uncommon, but the brave readers who read this far deserve a mention for their tenacity: thank you!\footnote{Assuming the reader did not cheat and jump directly to this part of the paper, in which case we reserve this acknowledgement for when the reader actually reads the paper.}
\bibliographystyle{amsplain}
\providecommand{\bysame}{\leavevmode\hbox to3em{\hrulefill}\thinspace}
\providecommand{\MR}{\relax\ifhmode\unskip\space\fi MR }
% \MRhref is called by the amsart/book/proc definition of \MR.
\providecommand{\MRhref}[2]{%
  \href{http://www.ams.org/mathscinet-getitem?mr=#1}{#2}
}
\providecommand{\href}[2]{#2}

\newpage
\appendix
\section{More on tenacity}\label{section:appendix}
\begin{definition}
We say that two symmetric systems $\tup{\PP,\sG,\sF}$ and $\tup{\PP',\sG',\sF'}$ are \textit{equivalent} if for every $V$-generic $G\subseteq\PP$ there is a filter $G'\subseteq\PP'$ such that $\HS_\sF^G=\HS_{\sF'}^{G'}$ and vice versa.\footnote{Note that $G'$ need not be $V$-generic, just to interpret $\HS$-names correctly.}
\end{definition}
The following theorem is joint work with Yair Hayut, written here with his permission.
\begin{theorem}
Every symmetric system is equivalent to a tenacious system.
\end{theorem}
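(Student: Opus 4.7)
The plan is to construct, from a given symmetric system $\tup{\PP,\sG,\sF}$, an equivalent tenacious system whose conditions are by definition those elements of the Boolean completion that are fixed by some $H \in \sF$. Let $\BB$ denote the regular open completion of $\PP$, and extend each automorphism in $\sG$ canonically to $\BB$. For $b \in \BB$ write $\sG_b = \sett{\pi \in \sG \mid \pi b = b}$ for its stabilizer. Define
\[
\PP' = \sett{b \in \BB\setminus\set{0} \mid \sG_b \in \sF},
\]
with the order inherited from $\BB$ and maximum $1_\BB$. Since $\sG_{\pi b} = \pi \sG_b \pi^{-1}$ and $\sF$ is normal, the action of $\sG$ restricts to an action by order-automorphisms on $\PP'$. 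Take $\sF$ itself to be the new filter of subgroups. By construction every $b \in \PP'$ is $\sF$-tenacious with $\sG_b$ as witness, so the system $\tup{\PP',\sG,\sF}$ is tenacious---in fact \emph{every} condition of $\PP'$ is tenacious.

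First I would check that $\tup{\PP',\sG,\sF}$ is a genuine symmetric system. The set $\PP'$ is closed under the Boolean operations inherited from $\BB$: the stabilizer of a finite join, meet, or negation of elements of $\PP'$ contains the intersection of the component stabilizers, which belongs to $\sF$ by closure under finite intersection; so $\PP'$ carries a well-defined notion of forcing and the $\sG$-action is order-preserving on it. Next, to establish equivalence of the two systems, given a $V$-generic $G \subseteq \PP$, let $\hat G \subseteq \BB$ be the ultrafilter it generates and set $G' = \hat G \cap \PP'$; this is a filter on $\PP'$. By induction on rank one shows that every hereditarily $\sF$-symmetric $\PP$-name $\dot x$ may be replaced by an $\sF$-symmetric $\PP'$-name $\dot x^*$ with $\dot x^G = (\dot x^*)^{G'}$: for $\dot x$ with symmetry witness $H \in \sF$, every pair $\tup{p,\dot y} \in \dot x$ lies together with its entire $H$-orbit, so the Boolean value of $\dot y \in \dot x$ in $\BB$ is $H$-fixed and hence lies in $\PP'$. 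Conversely, every hereditarily $\sF$-symmetric $\PP'$-name is already a $\BB$-name and pulls back to an $\sF$-symmetric $\PP$-name evaluated at any filter on $\PP$ extending $G'$; such a filter exists by Zorn's lemma applied inside $\BB$.

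The principal obstacle will be the inductive verification that every hereditarily $\sF$-symmetric $\PP$-name ``lives'' over $\PP'$ with the interpretation preserved. A raw $\PP$-name $\dot x$ may use conditions $p$ that are not themselves in $\PP'$, even when $\dot x$ is symmetric; the task is to perform the replacement coherently through the recursion so that both symmetry and the generic evaluation are respected. This is handled by the orbit observation above: if $H$ witnesses the symmetry of $\dot x$, and $\tup{p,\dot y} \in \dot x$, then the Boolean join $\bigvee_{\pi \in H}\pi p$ is $H$-fixed, lies in $\PP'$, and may replace $p$ in the construction of $\dot x^*$ without altering the interpretation at $G'$. Once this replacement is set up uniformly, the equivalence of the two systems follows immediately, and the theorem is proved.
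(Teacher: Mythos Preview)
Your approach is essentially the paper's own: pass to the Boolean completion, take the subalgebra of conditions with large stabilizer, and replace each condition appearing in a symmetric name by a suitable join over an orbit. However, the key replacement step as you state it is wrong.

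You write that if $H$ witnesses the symmetry of $\dot x$ and $\tup{p,\dot y}\in\dot x$, then the Boolean value of $\dot y\in\dot x$ is $H$-fixed, and that $\bigvee_{\pi\in H}\pi p$ may replace $p$. Neither claim holds with $H$ alone. For $\pi\in H$ we have $\pi[\![\dot y\in\dot x]\!]=[\![\pi\dot y\in\pi\dot x]\!]=[\![\pi\dot y\in\dot x]\!]$, which need not equal $[\![\dot y\in\dot x]\!]$; and $\pi p$ forces $\pi\dot y\in\dot x$, not $\dot y\in\dot x$. Concretely, if $\dot x=\{\tup{p,\dot y},\tup{\pi p,\pi\dot y}\}$ with $p,\pi p$ incompatible and $\dot y,\pi\dot y$ forced unequal, then replacing both conditions by $p\vee\pi p$ produces a name whose interpretation at a generic containing $p$ has two elements, whereas $\dot x$ has one. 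The fix---and this is exactly what the paper does---is to take the join over $\sym(\dot x)\cap\sym(\dot y)$ rather than over $H$; this group is still in $\sF$ because $\dot y$ is hereditarily symmetric, and now each $\pi p$ in the join genuinely forces $\dot y\in\dot x$.

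Your converse direction is also somewhat garbled: $G'\subseteq\PP'$ is not a subset of $\PP$, so ``a filter on $\PP$ extending $G'$'' does not parse. The paper handles this by assuming $\PP$ itself is already complete (so $\PP'\subseteq\PP$) and taking the upward closure of a $\PP'$-generic in $\PP$; you can do the same after identifying $\PP$ with a dense subset of $\BB$.
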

\begin{proof}
Let $\tup{\PP,\sG,\sF}$ be a symmetric system. We may assume without loss of generality that $\PP$ is a complete Boolean algebra. Let $\BB$ be the subalgebra of tenacious conditions, namely $p\in\BB$ if and only if there is some $H\in\cF$ such that for all $\pi\in\sG$, $\pi p=p$. Due to the fact that $\sF$ is normal, we get that $\BB$ is closed under $\sG$. Namely, if $p\in\BB$ is fixed by $H$, and $\pi\in\sG$, then $\pi p$ is fixed by $\pi H\pi^{-1}$. Therefore $\tup{\BB,\sG,\sF}$ is a symmetric system. Let us denote by $\HS_\PP$ and $\HS_\BB$ the two classes of hereditarily symmetric names.

It is clear that $\HS_\BB\subseteq\HS_\PP$. Let us show that if $\dot x\in\HS_\PP$, then there is some $\dot y\in\HS_\BB$ such that $1_\PP\forces_\PP\dot x=\dot y$. We prove this by induction on the rank of $\dot x$, so we may assume that every $\dot u$ appearing in $\dot x$ is in fact in $\HS_\BB$. If $\tup{p,\dot u}\in\dot x$, let $\bar p$ denote $\sum\{\pi p\mid\pi\in\sym(\dot x)\cap\sym(\dot u)\}$. This condition is well-defined since $\PP$ is a complete Boolean algebra. Moreover, by the Symmetry Lemma we get that $p\forces_\PP\dot u\in\dot x$ if and only if $\pi p\forces_\PP\dot u\in\dot x$ for $\pi\in\sym(\dot x)\cap\sym(\dot u)$ and therefore $\bar p\forces_\PP\dot u\in\dot x$ as well. Finally, if $\pi\in\sym(\dot x)\cap\sym(\dot u)$, then $\pi\bar p=\bar p$ by the very definition of $\bar p$, and therefore $\bar p\in\BB$. Now define $\dot y=\{\tup{\bar p,\dot u}\mid\tup{p,\dot u}\in\dot x\}$, then it is easy to see that the two names are equivalent and that $\dot y\in\HS_\BB$.

It remains to show that the two are equivalent. Clearly, if $G\subseteq\PP$ is $V$-generic, then it is also generic for $\BB$. So we only need to show that if $G\subseteq\BB$ is $V$-generic, then in $V[G]$ there is a sufficiently generic filter $G'\subseteq\PP$ such that $\HS_\PP^{G'}=\HS_\BB^G$. Let $G'$ simply be the upwards closure of $G$ in $\PP$.

First, $G'$ is a filter, if $p,p'\in G'$ then there are some $\bar p,\bar p'\in G$ such that $\bar p\leq_\PP p$ and $\bar p'\leq_\PP p'$. As $G$ is a filter, there is some $r\in G$ such that $r\leq_\BB\bar p,\bar p'$, and therefore $r\in G$ and $r\leq_\PP p,p'$. By its definition, $G'$ is upwards closed. So it is indeed a filter.

Next, we will show that if $\dot x\in\HS_\PP$, then the interpretation of $\dot x$ by $G'$ is the same as $\dot y^G$, where $\dot y$ is the name defined above. Again, this will be done by induction on the rank of $\dot x$, using the definition of $\bar p$ as $\sum\{\pi p\mid\pi\in\sym(\dot x)\cap\sym(\dot u)\}$ is a weaker condition than $p$. Recall that
\[\dot x^{G'}=\{\dot u^{G'}\mid\exists p\in G':\tup{p,\dot u}\in\dot x\},\]
Let $\tup{p,\dot u}\in\dot x$, there is some $\dot v\in\HS_\BB$ such that $1_\PP\forces_\PP\dot u=\dot v$, and $\tup{\bar p,\dot v}\in\dot y$. By the induction hypothesis $\dot u^{G'}=\dot v^G$, but since $\bar p\in G$ and $p\in G'$ we get that indeed $\dot x^{G'}=\dot y^G$ as wanted.
\end{proof}
\end{document}